\numberwithin{equation}{section}
\newtheorem{theorem}{Theorem}[section]
\newtheorem{lemma}[theorem]{Lemma}
\newtheorem{corollary}[theorem]{Corollary}
\newtheorem{remark}[theorem]{Remark}
\newtheorem{proposition}[theorem]{Proposition}
\newtheorem{assumption}[theorem]{Assumption}
\renewcommand{\P}{\mathbb{P}}
\renewcommand{\d}{\mathrm{d}}
\renewcommand{\epsilon}{\varepsilon}
\newcommand{\dd}{\,\mathrm{d}}
\newcommand{\R}{\mathbb{R}}
\newcommand{\N}{\mathbb{N}}
\newcommand{\E}{\mathbb{E}}
\newcommand{\Q}{\mathbb{Q}}
\newcommand{\sgn}{\mathop{\mathrm{sgn}}}
\title[Pathwise uniqueness for singular stochastic Volterra equations]{Pathwise uniqueness for singular stochastic Volterra equations with H{\"o}lder coefficients}
\author[Pr{\"o}mel]{David J. Pr{\"o}mel}
\address{David J. Pr{\"o}mel, University of Mannheim, Germany}
\email{proemel@uni-mannheim.de}
\author[Scheffels]{David Scheffels}
\address{David Scheffels, University of Mannheim, Germany}
\email{dscheffe@mail.uni-mannheim.de}
\date{\today}
\begin{document}

\begin{abstract}
  Pathwise uniqueness is established for a class of one-dimensional stochastic Volterra equations driven by Brownian motion with singular kernels and H{\"o}lder continuous diffusion coefficients. Consequently, the existence of unique strong solutions is obtained for this class of stochastic Volterra equations.
\end{abstract}

\maketitle

\noindent \textbf{Key words:} stochastic Volterra equation, stochastic partial differential equation, singular kernel, strong solution, pathwise uniqueness, Yamada--Watanabe theorem.

\noindent \textbf{MSC 2020 Classification:} 60H20, 60H15, 45D05.



\section{Introduction}

In this paper we study one-dimensional stochastic Volterra equations (SVEs) of the form
\begin{align}\label{eq:SVE intro}
  X_t&=x_0(t)+\int_0^t (t-s)^{-\alpha}\mu(s,X_s)\dd s+\int_0^t (t-s)^{-\alpha}\sigma(s,X_s)\dd B_s, \quad t\in [0,T],
\end{align}
where $\alpha\in[0,\frac{1}{2})$, $x_0\colon [0,T]\to\R$ is a continuous function, $\mu,\sigma\colon [0,T]\times \mathbb{R}\to \mathbb{R}$ are measurable functions and $(B_t)_{t\in [0,T]}$ is a standard Brownian motion. Although the stochastic integral in~\eqref{eq:SVE intro} is defined as a classical stochastic It{\^o} integral, a potential solution of this SVE is, in general, neither a semimartingale nor a Markov process. Assuming that $\mu$ is Lipschitz continuous and $\sigma$ is $\xi$-H{\"o}lder continuous for $\xi \in (\frac{1}{2(1-\alpha)},1]$, we show that pathwise uniqueness for the SVE~\eqref{eq:SVE intro} holds and, consequently, that there exists a unique strong solution.

Stochastic Volterra equations have been investigated in probability theory starting with the seminal works of Berger and Mizel~\cite{Berger1980a,Berger1980b} and serve as mathematical models allowing, in particular, to represent dynamical systems with memory effects such as population growth, spread of epidemics and turbulent flows. Recently, stochastic Volterra equations of the form~\eqref{eq:SVE intro} with non-Lipschitz continuous coefficients have demonstrated to fit remarkably well historical and implied volatilities of financial markets, see e.g. \cite{Bayer2016}, motivating the use of so-called rough volatility models in mathematical finance, see e.g. \cite{AbiJaberElEuch2019b,ElEuch2019}. Moreover, SVEs with non-Lipschitz continuous coefficients like~\eqref{eq:SVE intro} arise as scaling limits of branching processes in population genetics, see~\cite{Mytnik2015,AbiJaber2021b}.

The existence of strong solutions and pathwise uniqueness for stochastic Volterra equations with sufficiently regular kernels and Lipschitz continuous coefficients are well-known due to classical results such as \cite{Berger1980a,Berger1980b,Protter1985}, which have been generalized in various directions, e.g., allowing for anticipating and path-dependent coefficients, see \cite{Pardoux1990,Oksendal1993,Alos1997,Kalinin2021}. As long as the kernels of a one-dimensional SVE are sufficiently regular, i.e. excluding the singular kernel $(t-s)^{-\alpha}$ in~\eqref{eq:SVE intro}, the existence of unique strong solutions can be still obtained when the diffusion coefficients are only $1/2$-H{\"o}lder continuous, see \cite{AbiJaberElEuch2019b,Promel2022b}. The latter results are crucially based on the observation that solutions to SVEs with sufficiently regular kernels are semimartingales, allowing to rather directly implement approaches in the spirit of Yamada--Watanabe~\cite{Yamada1971}. Assuming a Lipschitz condition on the coefficients, the existence of unique strong solutions to SVEs with singular kernels were proven in \cite{Cochran1995,Coutin2001} and a slight extension beyond Lipschitz continuous coefficients can be found in \cite{Wang2008}.

Similarly to the case of ordinary stochastic differential equations (SDEs), the regularity assumptions on the coefficients and on the kernels of a stochastic Volterra equation can be significantly relaxed by considering the concept of weak solutions instead of strong solutions. While weak solutions to a certain class of one-dimensional SVEs were first treated by Mytnik and Salisbury in~\cite{Mytnik2015}, a comprehensive study of weak solutions to stochastic Volterra equations of convolutional type was recently developed by Abi~Jaber, Cuchiero, Larsson and Pulido~\cite{AbiJaber2021}, see also \cite{AbiJaber2019,AbiJaber2021b}. By introducing a local martingale problem associated to SVEs of convolutional type, Abi~Jaber et al.~\cite{AbiJaber2021} derived the existence of weak solutions to SVEs of convolutional type with sufficiently integrable kernels and continuous coefficients. Assuming additionally that the coefficients of the SVE lead to affine Volterra processes, weak uniqueness was obtained in \cite{Mytnik2015,AbiJaberElEuch2019,AbiJaber2021b,Cuchiero2020}. The concept of weak solutions to SVEs with general kernels was investigated in~\cite{Promel2022}.

A major challenge to prove pathwise uniqueness for the SVE~\eqref{eq:SVE intro} with its singular kernel~$(t-s)^{-\alpha}$ is the missing natural semimartingale representation of its potential solution. Assuming the drift coefficient~$\mu$ does not depend on the solution~$(X_t)_{t\in [0,T]}$ and the diffusion coefficient~$\sigma$ is $\xi$-H{\"o}lder continuous for $\xi \in (\frac{1}{2(1-\alpha)},1]$, Mytnik and Salisbury~\cite{Mytnik2015} established pathwise uniqueness for the SVE~\eqref{eq:SVE intro} by equivalently reformulating the SVE into a stochastic partial differential equation, which then allows to accomplish a proof of pathwise uniqueness in the spirit of Yamada--Watanabe relying on the methodology developed in \cite{Mytnik2006,Mytnik2011}. In the present paper, we generalize the results and method of Mytnik and Salisbury~\cite{Mytnik2015} to derive pathwise uniqueness for the stochastic Volterra equation~\eqref{eq:SVE intro} with general time-inhomogeneous coefficients. As classical transforms allowing to remove the drift of an SDE are not applicable to the SVE~\eqref{eq:SVE intro}, the general time-inhomogeneous coefficients~$\mu$ creates severe novel challenges. For the sake of readability, all proofs are presented in a self-contained manner although some intermediate steps can already be found in the work~\cite{Mytnik2015} of Mytnik and Salisbury.

The existence of a unique strong solution to the stochastic Volterra equation~\eqref{eq:SVE intro} follows by a general version of Yamada--Watanabe theorem (see~\cite{Yamada1971,Kurtz2014}) stating that the combination of pathwise uniqueness and the existence of weak solutions to the SVE~\eqref{eq:SVE intro} (as obtained in \cite{Promel2022}) guarantees the existence of a strong solution. Let us remark that strong existence and pathwise uniqueness play a crucial role in the context of large deviation and as key ingredients to fully justify some numerical schemes, see e.g. \cite{Dupuis1997,Mao1994}.


\vspace{0.15cm}

\noindent \textbf{Organization of the paper:} Section~\ref{sec: main results} presents the main results on the pathwise uniqueness and strong existence of solutions to stochastic Volterra equations. Section~\ref{sec:pathwUni} contains the main steps in the proof of pathwise uniqueness, while the remaining Sections~\ref{sec: transformation to SPDE}-\ref{sec_step5} provide the necessary auxiliary results to implement these main steps.

\vspace{0.15cm}

\noindent\textbf{Acknowledgments:} D. Scheffels gratefully acknowledges financial support by the Research Training Group ``Statistical Modeling of Complex Systems'' (RTG 1953) funded by the German Science Foundation (DFG).

\section{Main results}\label{sec: main results}

Let $(\Omega,\mathcal{F},(\mathcal{F}_t)_{t\in [0,T]},\mathbb{P})$ be a filtered probability space, which satisfies the usual conditions, $(B_t)_{t\in [0,T]}$ be a standard Brownian motion and $T\in (0,\infty)$.  We consider the one-dimensional stochastic Volterra equation (SVE) 
\begin{align}\label{eq:SVE}
  X_t&=x_0(t)+\int_0^t (t-s)^{-\alpha}\mu(s,X_s)\dd s+\int_0^t (t-s)^{-\alpha}\sigma(s,X_s)\dd B_s, \quad t\in [0,T],
\end{align}
where $\alpha\in[0,\frac{1}{2})$, $x_0\colon [0,T]\to\R$ is a deterministic continuous function and $\mu,\sigma\colon [0,T]\times \mathbb{R}\to \mathbb{R}$ are deterministic, measurable functions. Furthermore, $\int_0^t (t-s)^{-\alpha}\mu(s,X_s)\dd s$ is defined as a Riemann--Stieltjes integral and $\int_0^t (t-s)^{-\alpha}\sigma(s,X_s)\dd B_s$ as an It{\^o} integral.

\medskip

The regularity of the coefficients~$\mu$ and~$\sigma$ and of the initial condition~$x_0$ is determined in the following assumption.

\begin{assumption}\label{ass:coefficients}
  Let $\alpha \in [0,\frac{1}{2})$, let $x_0$ be deterministic and $\beta$-H{\"o}lder continuous for every $\beta\in (0,\frac{1}{2}-\alpha)$ and let $\mu,\sigma\colon [0,T]\times\R\to\R $ be measurable functions such that
  \begin{enumerate}
    \item[(i)] $\mu$ and $\sigma$ are of linear growth, i.e. there is a constant $C_{\mu,\sigma}>0$ such that
    \begin{equation*}
      |\mu(t,x)|+|\sigma(t,x)|\leq C_{\mu,\sigma}(1+|x|),
    \end{equation*}
    for all $t\in [0,T]$ and $x\in\R$.
    \item[(ii)] $\mu$ is Lipschitz continuous and $\sigma$ is H{\"o}lder continuous in the space variable uniformly in time of order $\xi$ for some $\xi\in [\frac{1}{2},1]$ such that
    \begin{equation*}
      \xi > \frac{1}{2(1-\alpha)},
    \end{equation*}
     where in the case of $\alpha=0$ even equality is allowed. Hence, there are constants $C_\mu,C_\sigma>0$ such that
    \begin{equation*}
      |\mu(t,x)-\mu(t,y)|\leq C_\mu|x-y|
      \quad\text{and}\quad
      |\sigma(t,x)-\sigma(t,y)|\leq C_\sigma|x-y|^{\xi}
    \end{equation*}
    hold for all $t\in [0,T]$ and $x,y\in \R$.
    \item[(iii)] For every $K>0$, there is some constant $C_K>0$ such that, for every $t\in[0,T]$ and every $x,y\in[-K,K]$,
    \begin{equation*}
      \bigg| \frac{\mu(t,x)-\mu(t,y)}{\sigma(t,x)-\sigma(t,y)}\bigg| \leq C_K,
    \end{equation*}
    where we use the convention $0/0:=1$.
  \end{enumerate}
\end{assumption} 

Assumption~\ref{ass:coefficients} is a standing assumption throughout the entire paper. Although not always explicitly stated all results are proven supposing Assumption~\ref{ass:coefficients}.

\begin{remark}
  Assumption~\ref{ass:coefficients}~(iii) is, for example, satisfied by any Lipschitz continuous functions~$\mu$ and $\sigma$ of the form $\sigma(t,x)=\mathop{\mathrm{sgn}}(x)|x|^\xi$ for $\xi\in[1/2,1]$. Note that, in interesting cases like the rough Heston model in mathematical finance, solutions to \eqref{eq:SVE} are non-negative (see \cite[Theorem~A.2]{AbiJaberElEuch2019}), so that the $\sgn$ in the definition of $\sigma$ does not influence the dynamics of the associated SVE. Then, for $|x|,|y|\leq K$, using the inequality $\big|\mathop{\mathrm{sgn}}(x)|x|^\xi-\mathop{\mathrm{sgn}}(y)|y|^\xi\big| \geq K^{-1}|x-y|$, we get
  \begin{equation*}
    \bigg| \frac{\mu(t,x)-\mu(t,y)}{\sigma(t,x)-\sigma(t,y)} \bigg|
    \leq C_\mu \frac{|x-y|}{\big|\sgn(x)|x|^\xi -\sgn(y)|y|^\xi\big|}
    \leq C_\mu \frac{|x-y|}{K^{-1}{|x-y|}}=C_\mu K<\infty.
  \end{equation*}
 
  Nevertheless, while Assumption~\ref{ass:coefficients}~(iii) is crucial for applying a Girsanov transformation in the proof of Theorem~\ref{thm:thm1} below, it is not a necessary condition. Indeed, if $\sigma$ does only depends on $t$, the Assumption~\ref{ass:coefficients}~(iii) cannot be satisfied for general Lipschitz continuous functions~$\mu$, but there exists a unique strong solution by classical results, see e.g. \cite{Wang2008}.
\end{remark}

Based on Assumption~\ref{ass:coefficients}, we obtain a unique strong solution of the stochastic Volterra equation~\eqref{eq:SVE}. Therefore, let us briefly recall the concepts of strong solutions and pathwise uniqueness. Let for $p\geq 1$, $L^p(\Omega\times [0,T])$ be the space of all real-valued, $p$-integrable functions on $\Omega\times [0,T]$. An $(\mathcal{F}_t)_{t\in[0,T]}$-progressively measurable stochastic process $(X_t)_{t\in [0,T]}$ in $L^p(\Omega\times [0,T])$, on the given probability space $(\Omega,\mathcal{F},(\mathcal{F}_t)_{t\in[0,T]},\mathbb{P})$, is called \textit{(strong) $L^p$-solution} to the SVE~\eqref{eq:SVE} if $ \int_0^t (|(t-s)^{-\alpha}\mu(s,X_s)|+|(t-s)^{-\alpha}\sigma(s,X_s)|^2 )\dd s<\infty$ for all $t\in[0,T]$ and the integral equation~\eqref{eq:SVE} holds a.s. We call a strong $L^1$-solution often just \textit{solution} to the SVE~\eqref{eq:SVE}. We say \textit{pathwise uniqueness} in $L^p(\Omega\times [0,T])$ holds for the SVE~\eqref{eq:SVE} if $\mathbb{P}(X_t=\tilde{X}_t, \,\forall t\in [0,T])=1$ for two $L^p$-solutions $(X_t)_{t\in[0,T]}$ and $(\tilde{X}_t)_{t\in[0,T]}$ to the SVE~\eqref{eq:SVE} defined on the same probability space $(\Omega,\mathcal{F},(\mathcal{F}_t)_{t\in[0,T]},\mathbb{P})$. Moreover, we say there exists a \textit{unique strong $L^p$-solution} $(X_t)_{t\in [0,T]}$ to the SVE~\eqref{eq:SVE} if $(X_t)_{t\in [0,T]}$ is a strong $L^p$-solution to the SVE~\eqref{eq:SVE} and pathwise uniqueness in $L^p$ holds for the SVE~\eqref{eq:SVE}. We say $(X_t)_{t\in [0,T]}$ is $\beta$-H{\"o}lder continuous for $\beta \in (0,1]$ if there exists a modification of $(X_t)_{t\in [0,T]}$ with sample paths that are almost all $\beta$-H{\"o}lder continuous.

\medskip

Note that the kernels $K_\mu(s,t)=K_\sigma(s,t)=(t-s)^{-\alpha}$ with $\alpha\in(0,1/2)$ fulfill the assumptions of Lemma~3.1 and Lemma~3.4 in \cite{Promel2022} for every
\begin{equation*}
  \epsilon\in\bigg(0,\frac{1}{\alpha}-2\bigg)
\end{equation*}
with 
\begin{equation*}
  \gamma=\frac{1}{2+\epsilon}-\alpha.
\end{equation*}
This means that, to use the results of \cite[Lemma~3.1 and Lemma~3.4]{Promel2022}, we need to consider $L^p$-solutions with
\begin{equation}\label{def_p_max}
  p>\max\bigg\lbrace \frac{1}{\gamma},1+\frac{2}{\epsilon} \bigg\rbrace = \max\bigg\{ \frac{
  2+\epsilon}{1-2\alpha-\epsilon\alpha},1+\frac{2}{\epsilon} \bigg\rbrace.
\end{equation}
The maximum in \eqref{def_p_max} is attained for $\epsilon^\star=\frac{1-2\alpha}{1+\alpha}$. Hence, inserting $\epsilon^\star$ into \eqref{def_p_max}, we consider in the following $L^p$-solutions and $L^p$-pathwise uniqueness for some 
\begin{equation}\label{def_p}
  p>3+\frac{6\alpha}{1-2\alpha}.
\end{equation}

\medskip

The following theorem states that pathwise uniqueness for the stochastic Volterra equation~\eqref{eq:SVE} holds, which is the main result of the present work.

\begin{theorem}\label{thm:main}
  Suppose Assumption~\ref{ass:coefficients} and let $p$ be given by \eqref{def_p}. Then, $L^p$-pathwise uniqueness holds for the stochastic Volterra equation~\eqref{eq:SVE}.
\end{theorem}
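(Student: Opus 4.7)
The plan is to adapt the Mytnik–Salisbury SPDE strategy to the time-inhomogeneous setting. Fix two $L^p$-solutions $X$ and $\tilde X$ to~\eqref{eq:SVE} on the same filtered probability space driven by the same Brownian motion $B$, and write $\Delta_s := X_s - \tilde X_s$. Following Section~\ref{sec: transformation to SPDE}, I would first lift the singular kernel by expressing $(t-s)^{-\alpha}$ as an integral of a Gaussian (heat) kernel, thereby identifying $X$ with the trace at the origin of a two-parameter field $u(t,r)$ that satisfies a stochastic heat-type equation in $(t,r)$. Although $X$ itself is not a semimartingale in $t$, the lifted field $u$ gains H\"older regularity in the auxiliary spatial variable $r$, quantifiable via the $L^p$-integrability implied by Assumption~\ref{ass:coefficients}(i) and the kernel estimates that already dictate the admissible range~\eqref{def_p}.

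Second, since the drift $\mu(s,X_s)$ cannot be removed by a Lamperti-type change of variables, I would use a Girsanov transformation. Setting
\[
  \theta_s := \frac{\mu(s,X_s) - \mu(s,\tilde X_s)}{\sigma(s,X_s) - \sigma(s,\tilde X_s)}\,\mathbf{1}_{\{\sigma(s,X_s)\neq\sigma(s,\tilde X_s)\}}
\]
and localising at $\tau_K := \inf\{t : |X_t|\vee|\tilde X_t| > K\}$, Assumption~\ref{ass:coefficients}(iii) forces $|\theta_s|\le C_K$ up to $\tau_K$, so Novikov's criterion yields an equivalent measure $\Q_K$ under which the drift in the equation for $\Delta$ vanishes and only a ``diffusive'' term with integrand $\sigma(s,X_s)-\sigma(s,\tilde X_s)$ remains.

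Third, I would run a Yamada–Watanabe-type estimate on the lifted SPDE difference $Y(t,r) := u(t,r) - \tilde u(t,r)$. Choose the standard $C^2$ approximations $\phi_n$ of $|\cdot|$ with $0\le \phi_n''(y) \le 2/(n y \log n)$ supported near the origin, apply It\^o's formula to $\phi_n(Y(t,r))$, and integrate against a suitable spatial test function in $r$. The drift contribution vanishes under $\Q_K$, and the quadratic-variation term reduces to an integral of $(\sigma(s,X_s)-\sigma(s,\tilde X_s))^2 \phi_n''(Y(s,r))$ weighted by the heat kernel; by the $\xi$-H\"older regularity of $\sigma$, this is dominated by $|\Delta_s|^{2\xi}\phi_n''(Y(s,r))$. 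Passing $n\to\infty$, undoing the localisation $K\to\infty$, and closing with a Gronwall argument yields $Y\equiv 0$, hence $X=\tilde X$.

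The principal difficulty lies in this last step: to absorb the quadratic-variation term into a Gronwall loop, one must transfer control from $|\Delta_s|^{2\xi}$ back to $|Y(s,r)|$, and this is only possible if the spatial H\"older regularity of $u(t,\cdot)$, together with a careful choice of scale linking $r$ to the support of $\phi_n''$, is compatible with the exponent $2\xi - 1$. This balance is exactly what the hypothesis $\xi > 1/(2(1-\alpha))$ enforces. A secondary obstacle is that the Girsanov density depends on the particular pair $(X,\tilde X)$, so the $L^p$ moment bounds on the lifted field must be propagated through the change of measure, which is the reason for the quantitative threshold~\eqref{def_p}. Executing these intertwined scaling arguments uniformly in the general time-inhomogeneous drift $\mu$ is the technical heart of the proof.
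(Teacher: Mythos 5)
Your outline matches the paper's high-level strategy — lift to an SPDE in a spatial variable, apply a Yamada--Watanabe approximation of $|\cdot|$ to the lifted difference, handle the quadratic variation via H\"older regularity of the field, close with Gr\"onwall — and you correctly identify that the hard step is transferring control from $|\Delta_s|^{2\xi}$ back to the spatially-smeared quantity. But the crucial mechanism that makes that transfer possible is missing from your sketch, and without it the argument does not close. The paper's resolution is a \emph{self-improving H\"older regularity bootstrap} (Theorem~\ref{thm:thm1} and Corollary~\ref{cor:Cor5}): one shows that if $\tilde X$ has modulus of continuity $2^{-N\zeta}$ near its small values on dyadic scales, then in fact it has modulus $2^{-N\zeta^1}$ for a strictly larger exponent $\zeta^1 < (\zeta\xi + \tfrac12-\alpha)\wedge 1$, with a quantitative tail bound on the threshold scale $N_{\zeta^1}$. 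Iterating this (Corollary~\ref{cor:Cor5}) starting from the base regularity $\zeta_0 = \tfrac\alpha2(\tfrac12-\alpha)$ from Proposition~\ref{prop: regularity result} drives the exponent $\zeta_n$ up to just below $\frac{1/2-\alpha}{1-\xi}\wedge 1$. Lemma~\ref{lem:Lem14} then checks that the hypothesis $\xi > 1/(2(1-\alpha))$ is exactly what is needed so that the attainable $\zeta$ exceeds $\alpha/(2\xi-1)$, which is the threshold at which the quadratic-variation term $I_4^{m,n}$ (with the scale $m^{(n)} = a_{n-1}^{-\alpha/\zeta}$ linking the spatial mollifier to the Yamada--Watanabe sequence) tends to zero. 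Merely invoking ``a careful choice of scale'' hides the entire content of the argument; the scale choice only works \emph{after} the bootstrap has upgraded the regularity.

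Two secondary points. First, you place the Girsanov change of measure at the level of the main It\^o expansion, so that the drift term of $\Delta$ vanishes under $\Q_K$. The paper instead handles the drift \emph{directly} under $\P$ using the Lipschitz continuity of $\mu$ (Lemma~\ref{lem:i1i2i3i5i}(ii)), which contributes the weak-singularity integral $\int_0^t (t-s)^{-\alpha}\E[|\tilde X(s,0)|]\dd s$ to the Gr\"onwall loop; Girsanov and Assumption~\ref{ass:coefficients}(iii) are used only \emph{inside} Theorem~\ref{thm:thm1}, to convert the difference field into a local martingale on which one can invoke Dambis--Dubins--Schwarz and Gaussian tail bounds. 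Your route (running the whole estimate under $\Q_K$) is plausible, but then all the moment bounds feeding the Gr\"onwall argument must be re-derived under $\Q_K$, which you acknowledge but do not resolve. Second, the lift is not by the standard Gaussian heat kernel: the kernel is $p_t^\theta(x) \propto t^{-\alpha}e^{-|x|^{2+\theta}/(2t)}$, the fundamental solution of the degenerate operator $\Delta_\theta = \frac{2}{(2+\theta)^2}\partial_x |x|^{-\theta}\partial_x$ with $\theta$ chosen so that $p_t^\theta(0) = c_\theta t^{-\alpha}$; this choice, and the Bessel-process identification of the associated semigroup, is needed to make the SPDE reformulation exact.
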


The proof of Theorem~\ref{thm:main} will be summarized in Section~\ref{sec:pathwUni} and the subsequent Sections~\ref{sec: transformation to SPDE}-\ref{sec_step5} provide the necessary auxiliary results. Relying on the pathwise uniqueness and the classical Yamada--Watanabe theorem, we get the existence of a unique strong solution.

\begin{corollary}
  Suppose Assumption~\ref{ass:coefficients} and let $p$ be given by \eqref{def_p}. Then, there exists a unique strong $L^p$-solution to the stochastic Volterra equation~\eqref{eq:SVE}.
\end{corollary}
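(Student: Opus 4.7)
The plan is to derive the corollary from Theorem~\ref{thm:main} by combining it, in the spirit of Yamada--Watanabe, with a suitable weak existence result already available in the literature. Concretely, I would first invoke the existence of a weak $L^p$-solution to the SVE~\eqref{eq:SVE} established in~\cite{Promel2022} (cf.\ Lemma~3.1 and Lemma~3.4 there, which motivated the constraint~\eqref{def_p} in the first place), so that for the value of $p$ given by~\eqref{def_p}, after choosing $\epsilon=\epsilon^\star=(1-2\alpha)/(1+\alpha)$ and $\gamma=1/(2+\epsilon^\star)-\alpha$, the kernel $(t-s)^{-\alpha}$ and the linear growth in Assumption~\ref{ass:coefficients}~(i) produce a weak solution $(X_t)_{t\in[0,T]}$ with the required $L^p$-integrability; the $\beta$-Hölder regularity imposed on $x_0$ in Assumption~\ref{ass:coefficients} matches the regularity demanded by those results.

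Having weak existence and, by Theorem~\ref{thm:main}, $L^p$-pathwise uniqueness in hand, I would then appeal to a general version of the Yamada--Watanabe theorem in the form of Kurtz~\cite{Kurtz2014}, which applies well beyond the SDE setting and in particular does not require the solution process to be Markov or a semimartingale, and which is therefore perfectly suited to the singular Volterra equation~\eqref{eq:SVE}. The input to Kurtz's theorem consists of a measurable map associating to each realisation of the driver $B$ a candidate solution (weak existence) together with a joint uniqueness statement ensuring that two solutions on the same stochastic basis coincide (pathwise uniqueness), yielding the existence of a strong $L^p$-solution $X$ that is adapted to the augmented filtration generated by $B$.

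Finally, uniqueness of the strong $L^p$-solution is simply the statement of Theorem~\ref{thm:main} specialised to two strong $L^p$-solutions defined on the same filtered probability space, giving $\P(X_t=\tilde X_t,\,\forall t\in[0,T])=1$; this combined with the strong solution constructed in the previous step gives the corollary.

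No essential obstacle is expected here, since the hard analytic work is entirely contained in Theorem~\ref{thm:main}. The only points that require a line of justification are the compatibility of the $L^p$-integrability threshold~\eqref{def_p} with the hypotheses of the weak existence theorem of~\cite{Promel2022}, and the precise form of the Yamada--Watanabe statement one cites, for which the abstract framework of~\cite{Kurtz2014} is the cleanest choice since it only uses measurability of the solution map in the driving Brownian path, a property that is automatic here.
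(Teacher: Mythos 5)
Your proposal follows exactly the paper's route: weak $L^p$-existence plus $L^p$-pathwise uniqueness (Theorem~\ref{thm:main}) combined via the Yamada--Watanabe theorem (\cite[Corollary~1]{Yamada1971}, or \cite[Theorem~1.5]{Kurtz2014} in generalized form). The only inaccuracy is a citation: the paper invokes \cite[Theorem~3.3]{Promel2022b} (together with \cite[Remark~3.5]{Promel2022b} to check that the kernel $(t-s)^{-\alpha}$ satisfies its hypotheses) for the weak $L^p$-existence, whereas Lemma~3.1 and Lemma~3.4 of \cite{Promel2022}, which you point to, are used only for the moment bounds and H\"older regularity in Lemma~\ref{lem:regularity} and to justify the integrability threshold~\eqref{def_p}.
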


\begin{proof}
  The $L^p$-pathwise uniqueness is provided by Theorem~\ref{thm:main}. The existence of a strong $L^p$-solution follows by the existence of a weak $L^p$-solution to the stochastic Volterra equation~\eqref{eq:SVE}, which is provided by \cite[Theorem~3.3]{Promel2022b}, which is applicable since the kernel $(t-s)^{-\alpha}$, $\alpha\in[0,\frac{1}{2})$, fulfills the required assumptions of \cite[Theorem~3.3]{Promel2022b}, cf. \cite[Remark~3.5]{Promel2022b}. Thanks to Yamada--Watanabe's theorem (see \cite[Corollary~1]{Yamada1971}, or \cite[Theorem~1.5]{Kurtz2014} for a generalized version), the existence of a weak $L^p$-solution and pathwise $L^p$-uniqueness imply the existence of a unique strong $L^p$-solution.
\end{proof}

Furthermore, we obtain the following regularity properties of solutions to the SVE~\eqref{eq:SVE}.

\begin{lemma}\label{lem:regularity}
  Suppose Assumption~\ref{ass:coefficients}, and let $(X_t)_{t\in[0,T]}$ be a strong $L^p$-solution to the stochastic Volterra equation~\eqref{eq:SVE} with $p$ given by \eqref{def_p}. Then, $\sup_{t\in[0,T]}\E[|X_t|^q]<\infty$ for any $q\geq 1$ and the sample paths of $(X_t)_{t\in[0,T]}$ are $\beta$-H{\"o}lder continuous for any $\beta\in (0,\frac{1}{2}-\alpha)$.
\end{lemma}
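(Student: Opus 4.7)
The plan is to establish the two assertions---the uniform moment bound $\sup_{t\in[0,T]}\E|X_t|^q<\infty$ and the H\"older regularity of the sample paths---by the standard Volterra-moment machinery: H\"older's inequality on the drift integral combined with Burkholder--Davis--Gundy on the stochastic integral, under the linear growth of $\mu,\sigma$, and closed by a (fractional) Gronwall step.

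For the moment bound, I would fix $q\geq p$ and decompose $X_t=x_0(t)+I_1(t)+I_2(t)$. H\"older's inequality applied to $I_1$ with conjugate exponent $q/(q-1)$ yields
\[
\E|I_1(t)|^q\leq C_{\mu,\sigma}^q\left(\int_0^t (t-u)^{-\alpha q/(q-1)}\dd u\right)^{q-1}\int_0^t\E(1+|X_u|)^q\dd u,
\]
provided $\alpha q/(q-1)<1$. For $I_2$ the analogous combination of BDG with H\"older's inequality of conjugate exponent $q/(q-2)$ on the quadratic variation produces a bound of the same form, with integrability constraint $2\alpha q/(q-2)<1$, i.e.\ $q>2/(1-2\alpha)$. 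Both constraints are satisfied by every $q\geq p$ thanks to \eqref{def_p}, since $p>3/(1-2\alpha)$. Adding the three pieces yields the linear Volterra inequality
\[
\E|X_t|^q\leq C_0+C_1\int_0^t\E|X_u|^q\dd u,\qquad t\in[0,T].
\]
Applying classical Gronwall with $q=p$, where the right-hand side is a priori locally integrable because of $X\in L^p(\Omega\times[0,T])$, gives $\sup_{t\leq T}\E|X_t|^p<\infty$. The bootstrap from $q=p$ to arbitrary $q\geq 1$ proceeds by re-running the same argument in the Banach space $L^q(\Omega)$ via Minkowski's integral inequality; smaller moments follow trivially by Jensen.

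For the H\"older regularity I would estimate $\E|X_t-X_s|^q$ via the decomposition
\[
X_t-X_s=[x_0(t)-x_0(s)]+\int_s^t(t-u)^{-\alpha}\mu(u,X_u)\dd u+\int_0^s[(t-u)^{-\alpha}-(s-u)^{-\alpha}]\mu(u,X_u)\dd u,
\]
together with its analogue for the $\sigma$-term. The initial piece $x_0(t)-x_0(s)$ is $\beta$-H\"older for every $\beta<1/2-\alpha$ by Assumption~\ref{ass:coefficients}. The first drift integral is controlled as for $I_1$, producing a factor $(t-s)^{q(1-\alpha)}$ once $\sup_u\E(1+|X_u|)^q<\infty$ is inserted. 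For the kernel-difference pieces, the elementary interpolation
\[
|(t-u)^{-\alpha}-(s-u)^{-\alpha}|\leq C(t-s)^\theta(s-u)^{-\alpha-\theta},\qquad \theta\in[0,1-\alpha],
\]
obtained from $|(t-u)^{-\alpha}-(s-u)^{-\alpha}|\leq(s-u)^{-\alpha}$ and $|(t-u)^{-\alpha}-(s-u)^{-\alpha}|\leq\alpha(t-s)(s-u)^{-\alpha-1}$, feeds into the drift bound and into BDG for the stochastic piece. Square-integrability on $[0,s]$ forces $\theta<1/2-\alpha$, and letting $\theta\uparrow 1/2-\alpha$ yields $\E|X_t-X_s|^q\leq C_{q,\epsilon}(t-s)^{q(1/2-\alpha-\epsilon)}$ for every $\epsilon>0$. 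Kolmogorov's continuity criterion then produces a $\beta$-H\"older modification for any $\beta<1/2-\alpha-\epsilon-1/q$; sending $\epsilon\downarrow 0$ and $q\uparrow\infty$ exhausts every $\beta\in(0,1/2-\alpha)$.

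The main technical point is the kernel-difference estimate and its balancing against the singularity at $u=s$: this is precisely what produces the sharp H\"older exponent $1/2-\alpha$. These computations for the kernel $(t-u)^{-\alpha}$ have already been carried out in Lemma~3.1 and Lemma~3.4 of \cite{Promel2022}, whose applicability for our kernel is verified in the paragraph preceding Theorem~\ref{thm:main}. One may therefore invoke those lemmas directly, both to derive the moment bound and to supply the increment estimate fed into Kolmogorov, which considerably shortens the proof.
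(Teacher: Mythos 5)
Your proposal ultimately defers to Lemma~3.1 and Lemma~3.4 of \cite{Promel2022} (with applicability for the kernel $(t-s)^{-\alpha}$ checked as in \cite[Remark~3.5]{Promel2022b}), which is exactly the one-line proof the paper gives. The preceding expansion of what those lemmas contain---linear-growth moment estimates via H\"older and Burkholder--Davis--Gundy closed by Gronwall, followed by the kernel-interpolation bound and Kolmogorov's criterion---is a correct account of the machinery inside those cited results, though the bootstrap from $\sup_t\E|X_t|^p<\infty$ to arbitrary $q>p$ needs a localization or iteration argument rather than a direct Gronwall application, since $\int_0^T\E|X_u|^q\,\mathrm{d}u$ is not finite a priori for $q>p$.
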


\begin{proof}
  The statements follow by \cite[Lemma~3.1 and Lemma~3.4]{Promel2022} since the kernel $(t-s)^{-\alpha}$ fulfills the regularity assumption of \cite[Lemma~3.1 and Lemma~3.4]{Promel2022} as shown in \cite[Remark~3.5]{Promel2022b}.
\end{proof}

For $k\in \N\cup\lbrace\infty\rbrace$, we write $C^k(\R)$, $C^k(\R_+)$ and $C^k([0,T]\times\R)$ for the spaces of continuous functions mapping from $\R$, $\R_+$ resp. $[0,T]\times\R$ to $\R$, that are $k$-times continuously differentiable. We use an index $0$ to indicate compact support, e.g.~$C_0^\infty(\R)$ denotes the space of smooth functions with compact support on $\R$. The space of square integrable functions $f\colon \R\to\R$ is denoted by $L^2(\R)$ and equipped with the usual scalar product $\langle \cdot, \cdot \rangle$. Moreover, a ball in $\R$ around~$x$ with radius $R>0$ is defined by $B(x,R):=\{y\in \R: |y-x|\leq R\}$ and we use the notation $A_{\eta}\lesssim B_{\eta}$ for a generic parameter~$\eta$, meaning that $A_{\eta}\le CB_{\eta}$ for some constant $C>0$ independent of $\eta$.

\section{Proof of pathwise uniqueness}\label{sec:pathwUni}

We prove Theorem~\ref{thm:main} by generalizing the well-known techniques of Yamada--Watanabe (cf. \cite[Theorem~1]{Yamada1971}) and the work of Mytnik and Salisbury~\cite{Mytnik2015}. One of the main challenges is the missing semimartingale property of a solution $(X_t)_{t\in[0,T]}$ to the SVE~\eqref{eq:SVE}. Therefore, we transform \eqref{eq:SVE} into a random field in Step~1, for which we can derive a semimartingale decomposition in \eqref{eq: main proof semimartingale}. Then, we implement an approach in the spirit of Yamada--Watanabe in Step~2-5 and conclude the pathwise uniqueness by using a Gr{\"o}nwall inequality for weak singularities in Step~6.

\begin{proof}[Proof of Theorem~\ref{thm:main}]
  Suppose there are two strong $L^p$-solutions $(X^1_t)_{t\in[0,T]}$ and $(X^2_t)_{t\in[0,T]}$ to the stochastic Volterra equation~\eqref{eq:SVE}.

  \textit{Step~1:} To induce a semimartingale structure, we introduce the random fields
  \begin{equation}
    X^i(t,x):=x_0(t)+\int_0^t p_{t-s}^\theta(x)\mu(s,X_s^i)\dd s+\int_0^tp_{t-s}^\theta(x)\sigma(s,X_s^i)\dd B_s,\label{def:randomfields}
  \end{equation}
  for $t\in [0,T]$, $x\in \R$ and $i=1,2$, where the densities $p_t^\theta\colon \R\to\R$ and $\theta:=1/2-\alpha$ are defined in \eqref{def_pt}. By Proposition~\ref{prop:step1}, we get that $X^i\in C([0,T]\times \R)$ and
  \begin{align}\label{eq: main proof semimartingale}
    \begin{split}
    \int_{\mathbb{R}}X^i(t,x)\Phi_t(x)\dd x
    &=\int_{\mathbb{R}}\bigg( x_0\Phi_0(x)+\int_0^t\Phi_s(x)\frac{\partial}{\partial s}x_0(s)\dd s\bigg)\dd x\\
    &\quad +\int_0^t\int_{\mathbb{R}}X^i(s,x)\bigg( \Delta_\theta\Phi_s(x)+\frac{\partial}{\partial s}\Phi_s(x) \bigg)\dd x\dd s\\
    &\quad+\int_0^t\mu(s,X^i(s,0))\Phi_s(0)\dd s
    +\int_0^t\sigma(s,X^i(s,0))\Phi_s(0)\dd B_s,
    \end{split}
  \end{align}
  for $t\in [0,T]$ and every $\Phi\in C^2_0([0,T]\times \mathbb{R})$, where the differential operator~$\Delta_\theta$ is defined in~\eqref{def:Deltatheta} and $\frac{\partial}{\partial s}x_0(s)$ is meant in the sense of distributions. Notice, due to \eqref{eq: main proof semimartingale}, the stochastic process $t\mapsto \int_{\mathbb{R}}X^i(t,x)\Phi_t(x)\dd x$ is a semimartingale and $X^i(t,0)=X^i_t$ for $t\in [0,T]$.

  \textit{Step~2:} We define suitable sequences $(\Phi_x^m)\subset C^2_0(\mathbb{R})$, for $x\in \R$, and $(\phi_n)\subset C^\infty(\R)$ of test functions, see \eqref{phi} and \eqref{def:phi_n} for the precise definitions, such that
  \begin{equation*}
    \Phi_x^m \to \delta_x\quad \text{as }m\to \infty,\quad \text{for every } x\in \R, \quad \text{and}\quad \phi_n\to |\cdot|\quad \text{as }n\to \infty.
  \end{equation*}
  Applying Proposition~\ref{prop:step2} (which is based on It{\^o}'s formula and \eqref{eq: main proof semimartingale}) and setting $\tilde{X}(t):=\tilde{X}(t,\cdot):=X^1(t,\cdot)-X^2(t,\cdot)$ for $t\in [0,T]$, we get
  \begin{align*}
    \phi_n(\langle \tilde{X}(t),\Phi_{x}^m \rangle)
    &=\int_0^t  \phi_n'(\langle \tilde{X}(s), \Phi_x^m \rangle)\langle \tilde{X}(s),\Delta_\theta \Phi_{x}^m \rangle \dd s \\
    &\quad+\int_0^t \phi_n'(\langle \tilde{X}(s),\Phi_x^m \rangle)\Phi_x^m(0) \big( \mu(s,X^1(s,0))-\mu(s,X^2(s,0)) \big)\dd s\\
    &\quad+\int_0^t  \phi_n'(\langle \tilde{X}(s), \Phi_x^m \rangle)\Phi_x^m(0) \big( \sigma(s,X^1(s,0))-\sigma(s,X^2(s,0))\big) \dd B_s\\
    &\quad+
    \frac{1}{2}\int_0^t  \psi_n(| \langle \tilde{X}(s),\Phi_x^m \rangle|)\Phi_x^m(0)^2 \big( \sigma(s,X^1(s,0))-\sigma(s,X^2(s,0)) \big)^2\dd s ,
  \end{align*}
  where $\langle \cdot, \cdot\rangle$ denotes the scalar product on $L^2(\R)$.

  \textit{Step~3:} To implement an approach in the spirit of Yamada--Watanabe, we need to introduce another suitable test function $\Psi \in  C([0,T]\times\R)$ (satisfying Assumption~\ref{ass:Psi} below). Denoting by $\dot{\Psi}:=\frac{\partial}{\partial s}\Psi$ the time derivative of $\Psi$, Proposition~\ref{prop:Psi_step3} leads to
  \begin{align*}
    &\langle \phi_n(\langle \tilde{X}(t),\Phi_{\cdot}^m \rangle),\Psi_t \rangle\\
    &\quad= \int_0^t \langle \phi_n'(\langle \tilde{X}(s),\Phi_\cdot^m \rangle)\langle \tilde{X}(s),\Delta_\theta \Phi_{\cdot}^m \rangle,\Psi_s \rangle \dd s\\
    &\quad\quad+\int_0^t \langle \phi_n'(\langle \tilde{X}(s),\Phi_\cdot^m \rangle)\Phi_\cdot^m(0),\Psi_s \rangle \big( \mu(s,X^1(s,0))-\mu(s,X^2(s,0)) \big)\dd s\\
    &\quad\quad +\int_0^t \langle \phi_n'(\langle \tilde{X}(s),\Phi_\cdot^m \rangle)\Phi_\cdot^m(0),\Psi_s \rangle \big( \sigma(s,X^1(s,0))-\sigma(s,X^2(s,0)) \big)\dd B_s\\
    &\quad\quad+\frac{1}{2}\int_0^t \langle \psi_n(| \langle \tilde{X}(s),\Phi_\cdot^m \rangle|)\Phi_\cdot^m(0)^2,\Psi_s \rangle \big ( \sigma(s,X^1(s,0))-\sigma(s,X^2(s,0)) \big ) \dd s\\
    &\quad\quad +\int_0^t \langle \phi_n(\langle \tilde{X}(s),\Phi_\cdot^m \rangle),\dot{\Psi}_s \rangle \dd s.
  \end{align*}

  \textit{Step~4:} Using the stopping time $T_{\xi,K}$ defined in \eqref{def:T_xi,K}, taking expectations and sending $n,m\to \infty$, Proposition~\ref{prop:step4} states that
  \begin{align*}
    &\E\big[\big\langle | \langle \tilde{X}(t\wedge T_{\xi,K})|,\Psi_{t\wedge T_{\xi,K}} \big\rangle ] \\
    &\quad\lesssim  \mathbb{E} \bigg [ \int_0^{t\wedge T_{\xi,K}}\int_{\mathbb{R}}|\tilde{X}(s,x)|\Delta_\theta \Psi_s(x)\dd x\dd s \bigg ]\\
    &\quad\quad +\int_0^{t\wedge T_{\xi,K}}\Psi_s(0)\mathbb{E}[|\tilde{X}(s,0)|]\dd s
    + \mathbb{E}\bigg[ \int_0^{t\wedge T_{\xi,K}}\int_{\mathbb{R}}|\tilde{X}(s,x)|\dot{\Psi}_s(x)\dd x\dd s \bigg].
  \end{align*}

  \textit{Step~5:} Since $T_{\xi,K}\to T$ as $K\to\infty$ a.s. by Corollary~\ref{Cor:StoppzeitggT}, applying Fatou's lemma yields
  \begin{align}
    \int_{\mathbb{R}}\mathbb{E}[ |\tilde{X}(t,x)|]\Psi_t(x)\dd x
    &\lesssim \int_0^{t}\int_{\mathbb{R}}\mathbb{E}[|\tilde{X}(s,x)|] | \Delta_\theta \Psi_s(x)+\dot{\Psi}_s(x)|\dd x\dd s \notag\\
    &\quad+ \int_0^t \Psi_s(0) \mathbb{E}[|\tilde{X}(s,0)|]\dd s.\label{eq:fastfertig}
  \end{align}
  Finally, we choose appropriate test functions $(\Psi_{N,M})_{N,M\in\N}$ (satisfying Assumption~\ref{ass:Psi}) to approximate the Dirac distribution around $0$ with $\Psi_{N,M}(t,\cdot)$. Thus,  choosing $\Psi_t(x)=\Psi_{N,M}(t,x)$ in \eqref{eq:fastfertig} and sending $N,M\to\infty$ yields, by Proposition~\ref{prop:step5}, that
  \begin{equation*}
    \mathbb{E}[ |\tilde{X}(t,0)|]\lesssim \int_0^t (t-s)^{-\alpha}\mathbb{E}[|\tilde{X}(s,0)| ]\dd s, \quad t\in [0,T].
  \end{equation*}

  \textit{Step~6:} Due to $\alpha\in (0,\frac{1}{2})$, Gr{\"o}nwall's inequality for weak singularities (see e.g. \cite[Lemma~A.2]{Kruse2014}) reveals
  \begin{equation*}
    \mathbb{E}[ |\tilde{X}(t,0)|]=0, \quad t\in [0,T],
  \end{equation*}
  and therefore $X^1_t=X^2_t=0$ a.s. By the continuity of $X^1$ and $X^2$ (see Lemma~\ref{lem:regularity}), we conclude the claimed pathwise uniqueness.
\end{proof}

\section{Step~1: Transformation into an SPDE}\label{sec: transformation to SPDE}

Recall, in general, a solution $(X_t)_{t\in[0,T]}$ of the SVE~\eqref{eq:SVE} will not be a semimartingale due to the $t$-dependence of the kernel. In this section we will transform the SVE~\eqref{eq:SVE} into a stochastic partial differential equation (SPDE) in distributional form, see \eqref{eq: main proof semimartingale}, which allows us to recover a semimartingale structure and, thus, to implement an approach in the spirit of Yamada--Watanabe.

\medskip

To that end, we consider the evolution equation
\begin{align}\label{eq: evol_eq}
  \begin{split}
  \frac{\partial u}{\partial t}(t,x)&=\Delta_\theta u(t,x),\quad t\in [0,T],\,x\in \R_+,\\
  u(0,x)&=\delta_0(x),
  \end{split}
\end{align}
where the differential operator $\Delta_\theta$ is defined by
\begin{equation}\label{def:Deltatheta}
  \Delta_\theta := \frac{2}{(2+\theta)^2} \frac{\partial}{\partial x}|x|^{-\theta}\frac{\partial}{\partial x}
\end{equation}
for some constant $\theta>0$. Note that we will later also consider the evolution equation \eqref{eq: evol_eq} on $t\in [0,T]$, $x\in\R$. It can be seen that the following densities solve \eqref{eq: evol_eq} if restricted to $x\in\R_+$:
\begin{equation}\label{def_pt}
  p_t^\theta(x):=c_\theta t^{-\frac{1}{2+\theta}}e^{-\frac{|x|^{2+\theta}}{2t}},\quad t\in [0,T],\,x\in\R_+,
\end{equation}
which we extend to $\R$ by setting
\begin{equation*}
  p_{t}^\theta(x):=p_{t}^\theta(|x|), \quad t\in [0,T],\, x\in \R.
\end{equation*}

Since $\int_0^\infty p_t^\theta(x)\dd x$ is independent of $t\in(0,T]$, one can verify that if we choose the constant
\begin{align}
  c_\theta := (2+\theta)2^{-\frac{1}{2+\theta}}\Gamma\bigg(\frac{1}{2+\theta}\bigg)^{-1},\label{def:c_theta}
\end{align}
where $\Gamma$ denotes the Gamma function, then $p_t^\theta\colon \R_+\to\R_+$ defines a probability density on $\R_+$. The reason, why we consider \eqref{eq: evol_eq}, is that by the choice of $\theta>0$ such that
\begin{equation}\label{choose_theta}
  \alpha=\frac{1}{2+\theta},
\end{equation}
we get that for $x=0$ the solution $p_{t-s}^\theta(0)$ represents the kernel in the SVE~\eqref{eq:SVE} up to a constant. Therefore, we obtain the following lemma.

\begin{lemma}\label{lem:equuiv}
  Every strong $L^p$-solution $(X_t)_{t\in [0,T]}$ of the SVE~\eqref{eq:SVE} defines an a.s.~continuous strong solution $(X(t,x))_{t\in[0,T],x\in\R}$ of
  \begin{align}\label{SPDE}
    X(t,x)=x_0(t)&+\int_0^t p_{t-s}^\theta(x)\mu(s,X(s,0))\dd s\\
    &+\int_0^tp_{t-s}^\theta(x)\sigma(s,X(s,0))\dd B_s,\quad t\in[0,T],x\in\R,\notag
  \end{align}
  with $\theta>0$ chosen such that \eqref{choose_theta} holds, i.e., on the probability space $(\Omega,\mathcal{F},(\mathcal{F}_t)_{t\in[0,T]},\P)$, there is a random field $(X(t,x))_{t\in[0,T],x\in\R}$ such that $X\in C([0,T]\times\R)$ a.s., $(X(t,x))_{t\in[0,T]}$ is $(\mathcal{F}_t)$-progressively measurable for $x\in\R$,
  \begin{equation*}
    \int_0^t\big(|p_{t-s}^\theta(x)\mu(s,X(s,0))|+|p^\theta_{t-s}(x)\sigma(s,X(s,0))|^2 \big)\dd s<\infty
  \end{equation*}
  and \eqref{SPDE} holds a.s. Conversely, every strong solution of \eqref{SPDE} defines a strong solution of the stochastic Volterra equation~\eqref{eq:SVE}.
\end{lemma}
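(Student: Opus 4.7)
The plan is to establish the equivalence in two directions. The backward direction is essentially a one-line observation; the forward direction requires constructing a natural candidate random field and proving it is jointly continuous in $(t,x)$.

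First I would observe that by \eqref{def_pt} together with the choice \eqref{choose_theta} of $\theta$,
\begin{equation*}
  p_{t-s}^\theta(0) = c_\theta (t-s)^{-\alpha},
\end{equation*}
so that evaluating the SPDE~\eqref{SPDE} at $x=0$ reproduces the SVE~\eqref{eq:SVE} (modulo the multiplicative constant $c_\theta$, which can be absorbed into $\mu$ and $\sigma$ without altering Assumption~\ref{ass:coefficients}). For the backward direction, if $(X(t,x))$ is a continuous strong solution of \eqref{SPDE}, then $X_t := X(t,0)$ is automatically progressively measurable, continuous, satisfies the integrability condition on $[0,T]$, and solves \eqref{eq:SVE}.

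For the forward direction, given a strong $L^p$-solution $(X_t)$ of~\eqref{eq:SVE}, I would define the random field
\begin{equation*}
  X(t,x) := x_0(t) + \int_0^t p_{t-s}^\theta(x)\mu(s,X_s)\dd s + \int_0^t p_{t-s}^\theta(x)\sigma(s,X_s)\dd B_s,\quad (t,x)\in[0,T]\times\R,
\end{equation*}
and verify that it is well-defined and jointly continuous. Well-definedness uses the explicit Gaussian-type formula \eqref{def_pt}, which gives $\|p_{t-\cdot}^\theta(x)\|_{L^1(0,t)} + \|p_{t-\cdot}^\theta(x)\|_{L^2(0,t)}^2 < \infty$ uniformly in $x$ on compacts (the worst singularity, at $x=0$, is $(t-s)^{-2\alpha}$, which is integrable since $\alpha<1/2$); combined with the linear growth of $\mu,\sigma$ and the moment bound $\sup_{t\in[0,T]}\E[|X_t|^q]<\infty$ from Lemma~\ref{lem:regularity}, this yields finiteness of both integrals and the required progressive measurability. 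By construction $X(t,0)=X_t$ a.s., and the equation \eqref{SPDE} holds.

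The technical core is the joint continuity, obtained via Kolmogorov's criterion: for $q$ large and some $\gamma>0$ I would establish
\begin{equation*}
  \E\big[|X(t,x) - X(t',x')|^q\big] \lesssim \big(|t-t'|+|x-x'|\big)^{\gamma q},
\end{equation*}
on compacts of $[0,T]\times\R$. Splitting into the drift and the stochastic-integral part, Jensen's inequality (for the drift) and the Burkholder--Davis--Gundy inequality (for the stochastic integral) reduce the problem to $L^1$- and $L^2$-estimates in $s$ of the kernel increments $p_{t-s}^\theta(x)-p_{t'-s}^\theta(x')$, which decompose into a spatial piece and a temporal piece, each of which admits Hölder bounds directly from \eqref{def_pt}. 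The accessible Hölder exponent is essentially $\frac{1}{2}-\alpha$, matching the known regularity of $X_t$. The main obstacle is the sharp control of these kernel increments near the singular diagonal $s=t$ and choosing $q$ large enough to cover the two-dimensional index set $(t,x)$; the threshold \eqref{def_p} on $p$ is precisely what allows these moment bounds to close up via Lemma~\ref{lem:regularity}, while the rest is bookkeeping using the Gaussian structure of $p_t^\theta$ and Assumption~\ref{ass:coefficients}(i).
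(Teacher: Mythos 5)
Your proposal is correct and follows essentially the same route as the paper: define the random field by the same explicit formula, observe $X(t,0)=X_t$ via $p_{t-s}^\theta(0)=c_\theta(t-s)^{-\alpha}$ (with the constant absorbed into the coefficients, which preserves Assumption~\ref{ass:coefficients}), and get the converse by evaluation at $x=0$. Where you diverge slightly is on the continuity: the paper's proof of Lemma~\ref{lem:equuiv} only remarks on continuity in each variable separately and implicitly relies on the quantitative Hölder estimate proven shortly afterward in Proposition~\ref{prop: regularity result}; your explicit invocation of Kolmogorov's continuity criterion with the moment bound from Lemma~\ref{lem:regularity} is precisely that estimate, so you are essentially inlining the paper's Proposition~\ref{prop: regularity result} into the proof of the lemma rather than deferring it.
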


\begin{proof}
  First, we assume that there is a solution to the SVE~\eqref{eq:SVE}. This implies a solution $Y$ to the SVE
  \begin{equation*}
    Y_t=x_0(t)+\int_0^t p_{t-s}^\theta(0)\mu(s,Y_s)\dd s+\int_0^t p_{t-s}^\theta(0)\sigma(s,Y_s)\dd B_s.
  \end{equation*}
  We define, for $t\in [0,T], x\in \mathbb{R}$,
  \begin{equation*}
    X(t,x):=x_0(t)+\int_0^t p_{t-s}^\theta(x)\mu(s,Y_s)\dd s+\int_0^tp_{t-s}^\theta(x)\sigma(s,Y_s)\dd B_s.
  \end{equation*}
  Then, by obtaining $X(t,0)=Y_t$, $X$ solves
  \begin{equation*}
    X(t,x)=x_0(t)+\int_0^t p_{t-s}^\theta(x)\mu(s,X(s,0))\dd s+\int_0^tp_{t-s}^\theta(x)\sigma(s,X(s,0))\dd B_s.
  \end{equation*}
  By the adaptedness of the It{\^o} integral and the Riemann--Stieltjes integral, $(X(t,x))_{t\in[0,T]}$ is $(\mathcal{F}_t)$-progressively measurable for every $x\in \R$. By the continuity of $p_t^\theta(x)$, $X(t,x)$ is continuous in $x$-direction. By the continuity of the initial condition $x_0$ and the integrals, it is also continuous in $t$-direction.
  
  Conversely, if $X=(X(t,x))_{t\in[0,T],x\in\R}$ solves \eqref{SPDE}, $Y_t:=X(t,0)$ is a solution of \eqref{eq:SVE}.
\end{proof}

Due to the transformation of the SVE~\eqref{eq:SVE} into the SPDE~\eqref{SPDE}, we shall study continuous solutions $X\in C([0,T]\times\mathbb{R})$ of the SPDE~\eqref{SPDE} instead of solutions to the SVE~\eqref{eq:SVE} directly. The next goal is to derive a regularity result for solutions of the SPDE~\eqref{SPDE}. For this purpose, we first investigate the densities $p_t^{\theta}$. We introduce some auxiliary lemmas, which are helpful for a better understanding of the densities~$p_t^\theta$, and skip the dependence on $\theta$ by writing
\begin{equation*}
  p_t(x):=ct^{-\alpha}e^{-\frac{|x|^{\frac{1}{\alpha}}}{2t}}\quad\text{for a fixed } \alpha\in (0, 1/2).
\end{equation*}

\begin{lemma}\label{lem:6.1}
  For any $x,y\in\mathbb{R}$, $t\in[0,T]$ and $\beta\in [0,1]$, one has
  \begin{equation*}
    |p_t(x)-p_t(y)|
    \lesssim t^{-\alpha}\bigg( \frac{|x-y|}{t}\bigg)^\beta  \max (|x|,|y|)^{(\frac{1}{\alpha}-1)\beta}.
  \end{equation*}
\end{lemma}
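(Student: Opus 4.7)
The plan is to establish the inequality at the two endpoints $\beta=0$ and $\beta=1$ separately, and then recover the statement for intermediate $\beta$ by geometric interpolation.

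For $\beta=0$ the bound is immediate: since $p_t(x)=ct^{-\alpha}e^{-|x|^{1/\alpha}/(2t)}\le ct^{-\alpha}$, the triangle inequality yields $|p_t(x)-p_t(y)|\le 2ct^{-\alpha}$, which is exactly the claim for $\beta=0$.

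For $\beta=1$ I would differentiate in the spatial variable. Because $\alpha\in(0,1/2)$ gives $1/\alpha>2$, the map $x\mapsto|x|^{1/\alpha}$ is of class $C^1$ on $\mathbb{R}$ with derivative $\frac{1}{\alpha}\sgn(x)|x|^{1/\alpha-1}$, which vanishes at $x=0$; hence $p_t\in C^1(\mathbb{R})$ with
\begin{equation*}
p_t'(x)=-\frac{c}{2\alpha}\,t^{-\alpha-1}\,\sgn(x)\,|x|^{1/\alpha-1}\,e^{-|x|^{1/\alpha}/(2t)}.
\end{equation*}
Dropping the exponential factor, $|p_t'(\xi)|\lesssim t^{-\alpha-1}|\xi|^{1/\alpha-1}$. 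Since $1/\alpha-1>0$, this bound is nondecreasing in $|\xi|$, and every point $\xi$ on the segment between $x$ and $y$ satisfies $|\xi|\le\max(|x|,|y|)$. The mean value theorem then gives
\begin{equation*}
|p_t(x)-p_t(y)|\lesssim t^{-\alpha-1}\max(|x|,|y|)^{1/\alpha-1}|x-y|=t^{-\alpha}\cdot\frac{|x-y|}{t}\cdot\max(|x|,|y|)^{1/\alpha-1},
\end{equation*}
which matches the claim for $\beta=1$.

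For general $\beta\in(0,1)$ I interpolate geometrically by writing
\begin{equation*}
|p_t(x)-p_t(y)|=|p_t(x)-p_t(y)|^{\beta}\cdot|p_t(x)-p_t(y)|^{1-\beta},
\end{equation*}
and applying the $\beta=1$ bound to the first factor and the $\beta=0$ bound to the second. The $t$-exponents combine as $(t^{-\alpha-1})^{\beta}(t^{-\alpha})^{1-\beta}=t^{-\alpha-\beta}=t^{-\alpha}\cdot t^{-\beta}$, which reproduces precisely the exponent structure in the desired inequality. The only delicate point is the non-smoothness of $x\mapsto|x|^{1/\alpha}$ at the origin, but since $1/\alpha>2$ its derivative vanishes continuously there, so $p_t$ is globally $C^1$ and the mean value theorem applies without restriction. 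I therefore do not anticipate any genuine obstacle.
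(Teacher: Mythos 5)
Your proof is correct and takes essentially the same route as the paper's: both rely on a mean value theorem estimate combined with the trivial bound $|p_t(x)-p_t(y)|\lesssim t^{-\alpha}$, interpolated via $|A|=|A|^{\beta}|A|^{1-\beta}$. The only cosmetic difference is that the paper applies the mean value theorem to the exponential factor as a function of $|x|$ (after assuming $|y|<|x|$), whereas you apply it directly to $p_t$ on $\mathbb{R}$ after noting that $1/\alpha>2$ makes $p_t$ globally $C^1$.
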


\begin{proof}
  First, let us fix $t\in[0,T]$ and consider the function $x\mapsto e^{-\frac{|x|^{1/\alpha}}{2t}}$. By applying the mean value theorem and assuming w.l.o.g. $|y|<|x|$, we obtain, for some $z\in [|y|,|x|]$,
  \begin{equation*}
    \frac{e^{-\frac{|x|^{\frac{1}{\alpha}}}{2t}} - e^{-\frac{|y|^{\frac{1}{\alpha}}}{2t}}}{|x|-|y|}
    = -\frac{z^{\frac{1}{\alpha}-1}}{2t\alpha}e^{-\frac{z^{1/\alpha}}{2t}},
  \end{equation*}
  which reveals that
  \begin{equation}\label{ineq13}
    \bigg| e^{-\frac{|x|^{\frac{1}{\alpha}}}{2t}}-e^{-\frac{|y|^{\frac{1}{\alpha}}}{2t}}\bigg| \leq \frac{|x-y|}{2t\alpha}|x|^{\frac{1}{\alpha}-1}.
  \end{equation}
  Using inequality \eqref{ineq13} and $\beta\in [0,1]$, we bound
  \begin{equation*}
    |p_t(x)-p_t(y)|
    \lesssim t^{-\alpha}\bigg| e^{-\frac{|x|^{\frac{1}{\alpha}}}{2t}}-e^{-\frac{|y|^{\frac{1}{\alpha}}}{2t}}\bigg|^\beta
    \lesssim t^{-\alpha}\bigg( \frac{|x-y|}{t} \bigg)^\beta \max(|x|,|y|)^{(\frac{1}{\alpha}-1)\beta}.
  \end{equation*}
\end{proof}

\begin{corollary}
  For any $x,y\in [-1,1]$, $t\in[0,T]$ and $\beta\in (0,1-\alpha)$, one has
  \begin{equation*}
    \int_0^t|p_s(x)-p_s(y)| \dd s\lesssim |x-y|^\beta.
  \end{equation*}
\end{corollary}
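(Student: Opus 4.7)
The plan is to invoke the preceding Lemma~\ref{lem:6.1} directly. For $x,y\in[-1,1]$ and $s\in(0,t]$, the lemma yields
\begin{equation*}
  |p_s(x)-p_s(y)| \lesssim s^{-\alpha}\Big(\frac{|x-y|}{s}\Big)^\beta \max(|x|,|y|)^{(\frac{1}{\alpha}-1)\beta}.
\end{equation*}
Since $\alpha\in(0,1/2)$, the exponent $(\frac{1}{\alpha}-1)\beta$ is strictly positive, so the confinement $x,y\in[-1,1]$ forces the factor $\max(|x|,|y|)^{(\frac{1}{\alpha}-1)\beta}$ to be bounded by $1$. This reduces the pointwise estimate to
\begin{equation*}
  |p_s(x)-p_s(y)| \lesssim s^{-\alpha-\beta}\,|x-y|^\beta.
\end{equation*}

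I would then integrate over $s\in[0,t]$. The singularity at $s=0$ is integrable precisely when $\alpha+\beta<1$, and this is exactly where the assumption $\beta\in(0,1-\alpha)$ enters: it guarantees $\int_0^t s^{-\alpha-\beta}\dd s=\frac{t^{1-\alpha-\beta}}{1-\alpha-\beta}\le \frac{T^{1-\alpha-\beta}}{1-\alpha-\beta}$, a finite constant depending only on $T$, $\alpha$ and $\beta$. Pulling the $|x-y|^\beta$ factor out of the $s$-integral then yields the claimed bound.

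There is no genuine obstacle here; the only point requiring care is checking that the hypothesis $\beta<1-\alpha$ is exactly what is needed to make the singularity at $s=0$ integrable. Note that the restriction $x,y\in[-1,1]$ is essential only to dispose of the $\max(|x|,|y|)^{(\frac{1}{\alpha}-1)\beta}$ term; away from the origin this factor would grow, and a localization of this kind is why the corollary is stated on a bounded interval rather than on all of $\mathbb{R}$.
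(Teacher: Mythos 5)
Your proof is correct and follows exactly the paper's own argument: apply Lemma~\ref{lem:6.1}, bound $\max(|x|,|y|)^{(\frac{1}{\alpha}-1)\beta}\le 1$ using $x,y\in[-1,1]$, and integrate $s^{-\alpha-\beta}$ over $[0,t]$, which converges precisely because $\beta<1-\alpha$. Nothing further to add.
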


\begin{proof}
  By Lemma~\ref{lem:6.1}, we see that
  \begin{align*}
    \int_0^t|p_s(x)-p_s(y)| \dd s
    &\lesssim \int_0^t s^{-\alpha}\bigg(\frac{|x-y|}{s}\bigg)^\beta \max (|x|,|y|)^{(\frac{1}{\alpha}-1)\beta}\dd s\\
    &\lesssim |x-y|^\beta \int_0^t s^{-\alpha-\beta}\dd s\lesssim |x-y|^\beta.
  \end{align*}
\end{proof}

\begin{lemma}\label{lem:6.2}
  For any $0<t<t'\leq T$ and $x\in\mathbb{R}$, one has
  \begin{equation*}
    \int_0^t ( p_{t'-s}(x)-p_{t-s}(x))^2\dd s \lesssim |t'-t|^{1-2\alpha}.
  \end{equation*}
\end{lemma}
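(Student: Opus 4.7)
The plan is to substitute $u = t - s$ so that the integral becomes
\begin{equation*}
  \int_0^t (p_{u+\delta}(x) - p_u(x))^2 \dd u, \qquad \delta := t'-t > 0,
\end{equation*}
and then to split the range of $u$ at the threshold $u=\delta$, using different estimates in the two regimes. This is the natural scale at which the behaviour of $p_v(x)$ changes from ``$p_{u+\delta}$ and $p_u$ are of comparable size'' to ``their difference is controlled by the time derivative''.

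For small $u$, namely $0 \le u \le \min(\delta,t)$, I would just use the triangle inequality and the pointwise bound $p_v(x) \lesssim v^{-\alpha}$ coming from $e^{-|x|^{1/\alpha}/(2v)} \le 1$, so that the integrand is bounded by a constant times $u^{-2\alpha}$. Since $1 - 2\alpha > 0$, integrating over $[0, \min(\delta, t)]$ contributes at most a constant times $\delta^{1-2\alpha}$.

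For large $u$, namely $\delta < u \le t$ (non-empty only if $t > \delta$), I would apply the mean value theorem together with a uniform-in-$x$ bound $\big|\tfrac{\partial}{\partial v} p_v(x)\big| \lesssim v^{-\alpha-1}$. This bound follows by differentiating the explicit formula,
\begin{equation*}
  \frac{\partial}{\partial v} p_v(x) = c\, v^{-\alpha - 1} e^{-|x|^{1/\alpha}/(2v)} \Big( -\alpha + \frac{|x|^{1/\alpha}}{2v}\Big),
\end{equation*}
and absorbing the potentially large factor $|x|^{1/\alpha}/v$ into the Gaussian-like exponential via the elementary estimate $(\alpha + y)e^{-y} \le C$ for $y \ge 0$. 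Squaring and integrating $\delta^2 u^{-2\alpha - 2}$ over $[\delta, t]$ gives at most a constant times $\delta^2 \cdot \delta^{-2\alpha-1} = \delta^{1-2\alpha}$, since $-2\alpha-2 < -1$ makes the integral convergent at $\infty$ and controlled at the lower endpoint.

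The main obstacle is making the uniform-in-$x$ bound on $\tfrac{\partial}{\partial v} p_v(x)$ genuinely uniform: the polynomial factor $|x|^{1/\alpha}/v$ grows with $|x|$, but it is precisely the argument appearing in the exponential, which is what allows it to be dominated regardless of how large $|x|$ becomes. Once this derivative bound is clean, the two-regime splitting immediately delivers the required estimate $\lesssim |t'-t|^{1-2\alpha}$.
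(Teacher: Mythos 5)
Your argument is correct, and it takes a genuinely different (and arguably cleaner) route than the paper's for the part of the integral away from the endpoint $s=t$. Both proofs split the time integral at the scale $\delta = t'-t$ and handle the near-endpoint piece identically: drop the exponential via $e^{-y}\le 1$ and integrate $u^{-2\alpha}$ over an interval of length $\lesssim\delta$. For the remaining piece, the paper decomposes $p_{t'-s}(x)-p_{t-s}(x)$ by adding and subtracting a cross term into a ``power-difference'' part and an ``exponential-difference'' part, then estimates these two separately (using $|a-b|^2\le a^2-b^2$ for $a>b>0$ for the first, and the mean value theorem applied only to the exponential factor for the second, with some bookkeeping to absorb the extra polynomial factors). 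You instead apply the mean value theorem directly to the full map $v\mapsto p_v(x)$ and prove the uniform-in-$x$ derivative bound $|\partial_v p_v(x)|\lesssim v^{-\alpha-1}$, the key point being that the potentially large factor $|x|^{1/\alpha}/v$ in $\partial_v p_v(x)$ is exactly of the form $ye^{-y}$ that the exponential tail dominates. This collapses the paper's two subcases into a single estimate; the trade-off is that you must compute and uniformly bound $\partial_v p_v$, whereas the paper's route relies only on elementary monotonicity of the two factors taken separately.
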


\begin{proof}
  We assume w.l.o.g. that $t'-t\leq t$ and use the linearity of the integral together with $|e^{-x}|\leq 1$ for non-negative $x$ to get
  \begin{align*}
    \int_0^t | p_{t'-s}(x)-p_{t-s}(x)|^{2}\dd s
    &\lesssim \int_{t-|t'-t|}^t | (t'-s)^{-\alpha}-(t-s)^{-\alpha}|^{2}\dd s\\
    &\quad+\int_0^{t-|t'-t|} |p_{t'-s}(x)-p_{t-s}(x) |^{2}\dd s\\
    &\lesssim \int_{t-|t'-t|}^t (t-s)^{-2\alpha}\dd s\\
    &\quad+\int_0^{t-|t'-t|} | (t-s)^{-\alpha}-(t'-s)^{-\alpha} |^{2}e^{-\frac{|x|^{\frac{1}{\alpha}}}{2(t-s)}}\dd s\\
    &\qquad+\int_0^{t-|t'-t|} (t'-s)^{-2\alpha}\bigg|e^{-\frac{|x|^{\frac{1}{\alpha}}}{2(t-s)}}-e^{-\frac{|x|^{\frac{1}{\alpha}}}{2(t'-s)}}\bigg|\dd s\\
    &=:I_1+I_2+I_3.
  \end{align*}
  For $I_1$, we directly compute
  \begin{equation*}
    I_1=\bigg[ \frac{-(t-s)^{1-2\alpha}}{1-2\alpha} \bigg]_{t-|t'-t|}^t \lesssim |t'-t|^{1-2\alpha}.
  \end{equation*}
  For $I_2$, we use $|a-b|^2\leq a^2-b^2$ for $a>b$ to bound
  \begin{align*}
    I_2
    &\leq \int_0^{t-|t'-t|} (t-s)^{-2\alpha}
    \dd s - \int_0^{t-|t'-t|} (t'-s)^{-2\alpha}\dd s\\
    &=\bigg[ \frac{-(t-s)^{1-2\alpha}}{1-2\alpha} \bigg]_0^{t-|t'-t|} - \bigg[ \frac{-(t'-s)^{1-2\alpha}}{1-2\alpha} \bigg]_{0}^{t-|t'-t|}\\
    &\lesssim |t'-t|^{1-2\alpha}.
  \end{align*}
  For $I_3$, we use the mean value theorem for the function $t\mapsto e^{-\frac{|x|^{\frac{1}{\alpha}}}{2(t-s)}}$, similarly as we did in~\eqref{ineq13}, to get the inequality
  \begin{equation*}
    \bigg| e^{-\frac{|x|^{\frac{1}{\alpha}}}{2(t-s)}}-e^{-\frac{|x|^{\frac{1}{\alpha}}}{2(t'-s)}}\bigg| \leq (t'-t)\frac{|x|^{\frac{1}{\alpha}}}{2(t-s)^2} e^{-\frac{|x|^{\frac{1}{\alpha}}}{2(t'-s)}}.
  \end{equation*}
  Using this and the inequality $e^{-x}\leq x^{-1}$ for all $x\geq 0$, such as $\frac{t'-t}{t-s}\leq 1$ and $\frac{t'-s}{t-s}\leq \frac{2(t-s)}{t-s}=2$ due to $s\leq t-|t'-t|$, we get
  \begin{align*}
    I_3
    &\leq (t'-t) \int_0^{t-|t'-t|} (t-s)^{-2\alpha}\bigg(\frac{|x|^{\frac{1}{\alpha}}}{2(t-s)^2} e^{-\frac{|x|^{\frac{1}{\alpha}}}{2(t'-s)}}\bigg)
    \dd s\\
    &\lesssim \int_0^{t-|t'-t|} (t-s)^{-2\alpha}\frac{(t'-t)(t'-s)}{(t-s)^2}\dd s\\
    &\lesssim \int_0^{t-|t'-t|} (t-s)^{-2\alpha} \dd s
    \lesssim |t'-t|^{1-2\alpha},
  \end{align*}
  which yields the statement.
\end{proof}

\begin{lemma}\label{lem:6.3}
  For any $x,y\in [-1,1]$, $t\in[0,T]$ and $\beta\in (0,\frac{1}{2}-\alpha)$, one has
  \begin{equation*}
    \int_0^t (p_{t-s}(x)-p_{t-s}(y))^2 \dd s\lesssim  \max \big(|x|,|y|\big)^{(\frac{1}{\alpha}-1)2\beta}|x-y|^{1-2\alpha}.
  \end{equation*}
\end{lemma}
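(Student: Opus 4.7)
The plan is to bound the integrand $(p_{t-s}(x)-p_{t-s}(y))^2$ pointwise using Lemma~\ref{lem:6.1}, then integrate. A naive first pass, applying Lemma~\ref{lem:6.1} with exponent $\beta$ and squaring, yields
\[
  (p_{t-s}(x) - p_{t-s}(y))^2 \lesssim (t-s)^{-2\alpha-2\beta}|x-y|^{2\beta}\max(|x|,|y|)^{(\frac{1}{\alpha}-1)2\beta}.
\]
Integration in $s$ converges precisely because $\beta<\tfrac{1}{2}-\alpha$ forces $2\alpha+2\beta<1$, producing the correct max-factor but only a $|x-y|^{2\beta}$ power, which falls short of the claimed $|x-y|^{1-2\alpha}$.

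To recover the sharp exponent, I would use a time-splitting trick: split $\int_0^t$ at $t-\delta$ with $\delta:=|x-y|\max(|x|,|y|)^{\frac{1}{\alpha}-1}$, handling the degenerate cases $|x-y|=0$, $\max(|x|,|y|)=0$ or $\delta\ge t$ separately by monotonicity. On the near piece $[t-\delta,t]$ I would use the crude bound $(p_{t-s}(x)-p_{t-s}(y))^2\lesssim (t-s)^{-2\alpha}$ (coming from $p_t\lesssim t^{-\alpha}$), integrating to $\lesssim \delta^{1-2\alpha}$. On the far piece $[0,t-\delta]$ I would apply Lemma~\ref{lem:6.1} with exponent $1$ and square, producing $(t-s)^{-2\alpha-2}|x-y|^2\max(|x|,|y|)^{2(\frac{1}{\alpha}-1)}$; this integrand is not integrable at $s=t$, but on $[0,t-\delta]$ the time integral is $\lesssim \delta^{-1-2\alpha}$, giving a contribution of order $\delta^{-1-2\alpha}|x-y|^2\max(|x|,|y|)^{2(\frac{1}{\alpha}-1)}$. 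The particular choice of $\delta$ is the one that equates the two pieces, each evaluating to $\delta^{1-2\alpha}=|x-y|^{1-2\alpha}\max(|x|,|y|)^{(\frac{1}{\alpha}-1)(1-2\alpha)}$.

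Finally, since $x,y\in[-1,1]$ we have $\max(|x|,|y|)\le 1$, and because $2\beta<1-2\alpha$ it follows that $\max(|x|,|y|)^{(\frac{1}{\alpha}-1)(1-2\alpha)}\le \max(|x|,|y|)^{(\frac{1}{\alpha}-1)2\beta}$, weakening the sharp estimate to the form stated in the lemma. The main obstacle is precisely the gap between the naive $|x-y|^{2\beta}$ and the desired $|x-y|^{1-2\alpha}$: the key is the heat-kernel-style balance of a crude uniform bound near the singularity against a smooth gradient bound away from it, with $\delta$ optimally tuned to the scale $|x-y|\max(|x|,|y|)^{1/\alpha-1}$ on which both estimates are comparable.
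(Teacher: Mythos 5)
Your proof is correct and takes a closely related but genuinely distinct route from the paper's. The paper also time-splits, but at the simpler cut-off $t-|x-y|$ (after reducing to the case $t\geq|x-y|$): on the near piece it applies Lemma~\ref{lem:6.1} with the intermediate exponent $\beta\in(0,\tfrac12-\alpha)$, which already produces the stated factor $\max(|x|,|y|)^{2\beta(\frac{1}{\alpha}-1)}|x-y|^{1-2\alpha}$ directly; on the far piece it applies Lemma~\ref{lem:6.1} with $\beta=1$, obtains the sharper $\max(|x|,|y|)^{2(\frac{1}{\alpha}-1)}|x-y|^{1-2\alpha}$, and then weakens the max-factor using $\max(|x|,|y|)\le1$. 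Your version instead picks the split point $\delta=|x-y|\max(|x|,|y|)^{1/\alpha-1}$ that exactly balances a crude, $x$-independent uniform bound $(t-s)^{-2\alpha}$ near the singularity against the gradient ($\beta=1$) bound away from it. This is a clean heat-kernel-style optimization; it avoids invoking the intermediate H{\"o}lder exponent on the near piece and in fact yields the sharper intermediate estimate $\max(|x|,|y|)^{(\frac{1}{\alpha}-1)(1-2\alpha)}|x-y|^{1-2\alpha}$ on both pieces before the weakening. The trade-off is that the split point is more complicated and the degenerate cases ($\delta=0$ or $\delta\ge t$) need the side remark you included, whereas the paper's $|x-y|$ cut-off is simpler to state and produces the claimed form immediately on the near piece.

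Two minor points worth spelling out if you write this up: the crude near-piece bound follows from $p_u(z)\le cu^{-\alpha}$ applied to each term of the difference, and the exponent comparison at the end relies on $\frac{1}{\alpha}-1>0$ together with $1-2\alpha>2\beta$, so the larger exponent on a quantity in $[0,1]$ gives the smaller value. Both are implicit in your sketch but should appear in a clean proof.
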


\begin{proof}
  W.l.o.g. we may assume $t\geq |x-y|$ and split the integral into
  \begin{align*}
    \int_0^t ( p_{t-s}(x)-p_{t-s}(y))^{2} \dd s
    &\leq \int_0^{t-|x-y|} ( p_{t-s}(x)-p_{t-s}(y) )^{2} \dd s\\
    &\quad + \int_{t-|x-y|}^t ( p_{t-s}(x)-p_{t-s}(y))^{2} \dd s\\
    &=:I_1+I_2.
  \end{align*}
  
  For $I_1$, we apply Lemma~\ref{lem:6.1} with $\beta=1$ to get
  \begin{align*}
    I_1
    &\lesssim  \max (|x|,|y|)^{(\frac{1}{\alpha}-1)2} \int_0^{t-|x-y|} |x-y|^{2} (t-s)^{-2\alpha-2}\dd s\\
    &=  \max (|x|,|y|)^{(\frac{1}{\alpha}-1)2}|x-y|^{2}  \bigg[ \frac{-(t-s)^{1-2\alpha-2}}{1-2\alpha-2}\ \bigg]_0^{t-|x-y|}\\
    &\lesssim  \max (|x|,|y|)^{(\frac{1}{\alpha}-1)2}|x-y|^{2}  \big( t^{-2\alpha-1} +|x-y|^{-2\alpha-1}  \big)\\
    &\lesssim  \max (|x|,|y|)^{(\frac{1}{\alpha}-1)2\beta}|x-y|^{1-2\alpha}
  \end{align*}
  with $t\geq |x-y|$.
  
  For $I_2$, Lemma~\ref{lem:6.1} again, but with $\beta\in (0,1/2-\alpha)$ such that $2\alpha+2\beta<1$, yields
  \begin{align*}
    I_2
    &\lesssim \max (|x|,|y|)^{(\frac{1}{\alpha}-1)2\beta} |x-y|^{2\beta} \int_{t-|x-y|}^t (t-s)^{-2\alpha-2\beta}\dd s\\
    &\lesssim \max (|x|,|y|)^{(\frac{1}{\alpha}-1)2\beta} |x-y|^{2\beta} \bigg[ \frac{-(t-s)^{1-2\alpha-2\beta}}{1-2\alpha-2\beta} \bigg]_{t-|x-y|}^t\\
    &\lesssim \max (|x|,|y|)^{(\frac{1}{\alpha}-1)2\beta} |x-y|^{2\beta} |x-y|^{1-2\alpha-2\beta}\\
    &\lesssim \max (|x|,|y|)^{(\frac{1}{\alpha}-1)2\beta} |x-y|^{1-2\alpha}.
  \end{align*}
\end{proof}

With these auxiliary results at hand, we are ready to prove the following regularity result for solutions of the SPDE~\eqref{SPDE}.
  
\begin{proposition}\label{prop: regularity result}
  Suppose Assumption~\ref{ass:coefficients} and let $X\in C([0,T]\times\mathbb{R})$ be a strong solution of the SPDE~\eqref{SPDE}.
  \begin{itemize}
    \item[(i)] For any $p\in (0,\infty)$, one has
      \begin{equation*}
        \sup\limits_{t\in [0,T]} \sup\limits_{x\in\mathbb{R}}\mathbb{E}[ |X(t,x)|^p ]<\infty.
      \end{equation*}
    \item[(ii)] We define the random field $(Z(t,x))_{t\in[0,T],x\in\R}$ by
     \begin{align*}
       Z(t,x)&:=X(t,x)-x_0(t)\\
       &=\int_0^t p_{t-s}^\theta(x)\mu(s,X(s,0))\dd s + \int_0^t p_{t-s}^\theta(x)\sigma(s,X_s(s,0))\dd B_s.
     \end{align*}
     For any $0\leq t\leq t'\leq T$, $|x|,|y|\leq 1$ and $p\in [2,\infty )$, we get
     \begin{equation*}
       \mathbb{E}\big[| Z(t,x)-Z(t',y)|^p \big]\lesssim |t'-t|^{(\frac{1}{2}-\alpha)p}+|x-y|^{(\frac{1}{2}-\alpha)p}.
     \end{equation*}
  \end{itemize}
\end{proposition}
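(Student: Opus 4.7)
The plan is to estimate the $L^p$-moments of $Z$ directly from its integral representation, combining Jensen's/Minkowski's inequality for the drift with Burkholder--Davis--Gundy (BDG) and Minkowski's integral inequality for the stochastic integral, and feeding in the kernel estimates of Lemmas~\ref{lem:6.1}--\ref{lem:6.3} together with the uniform moment bound for $X(\cdot,0)$.

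For part~(i), I first note that by Lemma~\ref{lem:equuiv} the process $X(t,0)$ coincides with a strong $L^p$-solution of the SVE~\eqref{eq:SVE}, so that Lemma~\ref{lem:regularity} yields $\sup_{s\in[0,T]} \E[|X(s,0)|^q]<\infty$ for every $q\ge 1$. The pointwise bound $p_t^\theta(x) \leq c_\theta t^{-\alpha}$ (valid for all $x\in\R$ since the exponential factor is at most $1$), combined with $\alpha\in[0,\tfrac{1}{2})$, renders $\int_0^t p_{t-s}^\theta(x)\,\dd s$ and $\int_0^t p_{t-s}^\theta(x)^2\,\dd s$ finite and bounded uniformly in $(t,x)\in[0,T]\times\R$. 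Applying BDG together with Minkowski's integral inequality to the stochastic integral and Jensen's inequality to the drift, and invoking the linear growth of $\mu,\sigma$, one obtains $\E[|X(t,x)|^p] \leq C(1 + \sup_s \E[|X(s,0)|^p]) < \infty$ uniformly in $(t,x)$.

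For part~(ii), I split $Z(t,x)-Z(t',y) = [Z(t',x)-Z(t,x)] + [Z(t',y)-Z(t',x)]$ and treat the temporal and spatial increments separately. The temporal increment is written as the sum of a fresh time-slice $\int_t^{t'} p_{t'-s}^\theta(x)[\cdots]$ and a density difference $\int_0^t[p_{t'-s}^\theta(x)-p_{t-s}^\theta(x)][\cdots]$; applying BDG and Minkowski to the stochastic parts, Lemma~\ref{lem:6.2} controls the difference piece by $|t'-t|^{1-2\alpha}$ and the elementary bound $\int_t^{t'}(t'-s)^{-2\alpha}\,\dd s \lesssim |t'-t|^{1-2\alpha}$ handles the fresh slice. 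Using Cauchy--Schwarz in time to derive the corresponding $L^1$-bound $\int_0^t|p_{t'-s}^\theta(x)-p_{t-s}^\theta(x)|\,\dd s \lesssim |t'-t|^{1/2-\alpha}$ from Lemma~\ref{lem:6.2}, together with Jensen's inequality, the drift parts yield the same rate; after raising to the $p$-th (resp.\ $p/2$-th) power, all four temporal contributions are of order $|t'-t|^{(1/2-\alpha)p}$. For the spatial increment I apply the same scheme: BDG plus Lemma~\ref{lem:6.3} (with any $\beta\in(0,\tfrac{1}{2}-\alpha)$) on the stochastic side, and Jensen plus the corollary immediately following Lemma~\ref{lem:6.1} (with $\beta=\tfrac{1}{2}-\alpha\in(0,1-\alpha)$) on the drift side. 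The assumption $|x|,|y|\le 1$ is used precisely to kill the factor $\max(|x|,|y|)^{(1/\alpha-1)2\beta}$ appearing in Lemma~\ref{lem:6.3}, which is bounded by $1$ since $1/\alpha-1>1$ and the exponent is nonnegative.

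The main technical point is the careful bookkeeping in the BDG--Minkowski step: since BDG produces $\E[|M_t|^p] \lesssim \E[\langle M\rangle_t^{p/2}]$ with $p/2\ge 1$, I need Minkowski's integral inequality at the level of $L^{p/2}(\Omega)$ to interchange the time integral with the expectation, which relies on $p\ge 2$. Once this is done the uniform moment bound from part~(i) absorbs the factors $\E[|\sigma(s,X(s,0))|^p]^{2/p}$, and the kernel estimates directly deliver the Hölder exponent $\tfrac{1}{2}-\alpha$ in both variables.
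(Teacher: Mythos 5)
Your proposal is correct and matches the paper's proof in all essentials: split $Z(t',y)-Z(t,x)$ into temporal (fresh slice plus density difference) and spatial pieces, treat the stochastic integrals via BDG and Minkowski, the drift integrals via Cauchy--Schwarz/Jensen, and feed in Lemmas~\ref{lem:6.2}--\ref{lem:6.3} together with the uniform moment bound from part~(i). The only cosmetic deviation is that you route the drift terms through the $L^1$-in-time kernel estimates and Jensen, whereas the paper applies Cauchy--Schwarz directly and reuses the $L^2$ kernel bounds, but both give the same exponent.
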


\begin{proof}
  (i) Let us assume that $p\geq 2$. For $p\in(0,2)$, the statement then follows by the orderedness of the $L^p$-spaces. From Lemma~\ref{lem:equuiv} we know that $Y_t:=X(t,0)$ is a solution of the SVE~\eqref{eq:SVE} and from Lemma~\ref{lem:regularity} we know that its moment are finite. Thus, applying H{\"o}lder's and the Burkholder--Davis--Gundy inequality, the linear growth condition on~$\mu$ and~$\sigma$ from Assumption~\ref{ass:coefficients}, and Lemma~\ref{lem:regularity}, we get
  \begin{align*}
    \mathbb{E}[ |X(t,x)|^p ]
    &\lesssim 1+ \mathbb{E}\bigg[ \bigg| \int_0^t p_{t-s}^\theta(x)\mu(s,Y_s)\dd s \bigg|^p \bigg]
    +\mathbb{E}\bigg[ \bigg| \int_0^t p_{t-s}^\theta(x)\sigma(s,Y_s)\dd B_s \bigg|^p \bigg]\\
    &\lesssim 1+\bigg( \int_0^t \big( p_{t-s}^\theta(x)\big)^2\dd s \bigg)^{\frac{p}{2}}
    +\bigg( \int_0^t\big( p_{t-s}^\theta(x) \big)^2 \dd s \bigg)^{\frac{p}{2}}\\
    &\lesssim 1+\bigg( \int_0^tc_\theta^2 (t-s)^{-2\alpha}e^{-2\frac{|x|^{2+\theta}}{2(t-s)}} \dd s \bigg)^{\frac{p}{2}}\\
    &\lesssim 1+\bigg( \int_0^t (t-s)^{-2\alpha}\dd s \bigg)^{\frac{p}{2}}<\infty.
  \end{align*}
  
  (ii) With 
  \begin{equation*}
    Z(t,x)=\int_0^t p_{t-s}^\theta(x)\mu(s,X(s,0))\dd s + \int_0^t p_{t-s}^\theta(x)\sigma(s,X_s(s,0))\dd B_s
  \end{equation*}
  and by splitting the integrals, we get
  \begin{align*}
    &|Z(t',x)-Z(t,y)|\\
    &\quad=\int_0^t \big( p_{t'-s}^\theta (x) - p_{t-s}^\theta (x) \big)\mu(s,X(s,0))\dd s + \int_0^t \big( p_{t-s}^\theta (x) - p_{t-s}^\theta (y) \big)\mu(s,X(s,0))\dd s\\
    &\qquad + \int_t^{t'}  p_{t'-s}^\theta (x) \mu(s,X(s,0))\dd s\\
    &\qquad+\int_0^t \big( p_{t'-s}^\theta (x) - p_{t-s}^\theta (x) \big)\sigma(s,X(s,0))\dd B_s + \int_0^t \big( p_{t-s}^\theta (x) - p_{t-s}^\theta (y) \big)\sigma(s,X(s,0))\dd B_s\\
    &\qquad + \int_t^{t'} p_{t'-s}^\theta (x) \sigma(s,X(s,0))\dd B_s\\
    &\quad=:D_1+D_2+D_3+S_1+S_2+S_3.
  \end{align*}
  We use Lemma~\ref{lem:6.2}, Lemma~\ref{lem:6.3}, H{\"o}lder's and the Burkholder--Davis--Gundy inequality, Fubini's theorem as well as (i) to get the following estimates:
  \begin{align*}
    &\mathbb{E}[|D_1|^p]\leq \bigg(\int_0^t \big( p_{t'-s}^\theta (x) - p_{t-s}^\theta (x) \big)^2 \dd s\bigg)^{\frac{p}{2}}\lesssim |t'-t|^{p(\frac{1}{2}-\alpha)},\\
    &\mathbb{E}[|S_1|^p]\leq \bigg(\int_0^t  \big( p_{t'-s}^\theta (x) - p_{t-s}^\theta (x) \big)^2 \dd s\bigg)^{\frac{p}{2}}\lesssim |t'-t|^{p(\frac{1}{2}-\alpha)},\\
    &\mathbb{E}[|D_2|^p]\leq \bigg(\int_0^t \big( p_{t-s}^\theta (x) - p_{t-s}^\theta (y) \big)^2 \dd s\bigg)^{\frac{p}{2}}\lesssim |x-y|^{p(\frac{1}{2}-\alpha)},\\
    &\mathbb{E}[|S_2|^p]\lesssim |x-y|^{p(\frac{1}{2}-\alpha)},\\
    &\mathbb{E}[|D_3|^p]\leq \bigg(\int_t^{t'} p_{t'-s}^\theta (x)^2 \dd s\bigg)^{\frac{p}{2}}\lesssim \bigg(\int_t^{t'} (t'-s)^{-2\alpha} \dd s\bigg)^{\frac{p}{2}}\lesssim |t'-t|^{p(\frac{1}{2}-\alpha)}, \\
    &\mathbb{E}[|S_3|^p]\lesssim |t'-t|^{p(\frac{1}{2}-\alpha)}.
  \end{align*}
  Hence, we obtain the desired statement.
\end{proof}

\subsection{Transformation to an SPDE in distributional form}

The next aim is to transform the SPDE~\eqref{SPDE} into an SPDE in distributional form. To that end, we consider the evolution equation \eqref{eq: evol_eq} on the whole $[0,T]\times\R$, i.e.
\begin{align}\label{eq: evol_eq 2}
  \begin{split}
  \frac{\partial u}{\partial t}(t,x)&=\Delta_\theta u(t,x),\quad t\in [0,T],\,x\in\R,\\
  u(0,x)&=\delta_0(x).
  \end{split}
\end{align}

We are interested in the fundamental solution $p^\theta \colon[0,T]\times\R\times\R\to\R$ of \eqref{eq: evol_eq 2}, in the sense that for any $g\colon\R\to\R$, $\big(\int_\R p_t^\theta(x,y)g(y)\dd y\big)_{t\in[0,T],x\in\R}$ is a solution of \eqref{eq: evol_eq 2} with initial condition $g$ instead of~$\delta_0$.

The semigroup $(S_t)_{t\in[0,T]}$ generated by $\Delta_\theta$ is then defined by $S_t\colon C_0^\infty(\R)\to C_0^\infty(\R)$ via
\begin{equation}\label{eq: def_semigroup}
  S_t\phi(x):=\int_{\mathbb{R}}p_t^\theta(x,y)\phi(y)\dd y, \quad \phi \in C_0^\infty(\R).
\end{equation}

First, we go back to the system \eqref{eq: evol_eq} where only $x\in\R_+$ is allowed and denote its fundamental solutions by
\begin{equation}\label{def:p_betrag}
  p^{|\cdot|} \colon[0,T]\times\R\times\R\to\R
\end{equation}
and skip the $\theta$-dependence for the sake of a better readability.

\medskip

To find explicit formulas for the $p^{|\cdot|}$, we need the following preparations:
\begin{itemize}
  \item A squared Bessel process $Z_t\geq 0$ of dimension $n\in\mathbb{R}$ is given by the stochastic differential equation
    \begin{equation*}
      \d Z_t=2\sqrt{Z_t}\dd B_t + n\dd t,\quad t\in[0,T].
    \end{equation*}
  \item The generator of a squared Bessel process of dimension $n$ is given by 
    \begin{equation}\label{generator}
      (Lf)(x)=n\frac{\partial}{\partial x}f(x) + 2x \frac{\partial^2}{\partial x^2}f(x),\quad x\in\R_+,
    \end{equation}
    for $f\in C_0^\infty(\R_+)$, see \cite[page~443]{Revuz1999}.
  \item The semigroup $(S_t)_{t\in [0,T]}$, defined in \eqref{eq: def_semigroup}, fulfills
    \begin{equation}\label{property}
      \frac{\partial}{\partial t}(S_tf)=\Delta_\theta (S_tf) 
    \end{equation}
    for all $f\in C_0^\infty(\R_+)$, since $p^\theta$ is the fundamental solution of \eqref{eq: evol_eq 2}. Analogue, the semigroup $(S^{|\cdot|}_t)_{t\in [0,T]}$ which we define as \eqref{eq: def_semigroup} but with $p^{|\cdot|}$ instead of $p$, fulfills
    \begin{equation*}
      \frac{\partial}{\partial t}(S^{|\cdot|}_tf)=\Delta_\theta (S^{|\cdot|}_tf) 
    \end{equation*}
    for all $f\in C_0^\infty(\R_+)$.
  \item Denote by $(\xi_t)_{t\in [0,T]}$ the Markov process generated by the semigroup~$\big(S_t^{|\cdot|}\big)_{t\in[0,T]}$, that is, it has the transition densities~$(p_t^{|\cdot|})_{t\in [0,T]}$. We define the semigroup $(T_t)_{t\in [0,T]}$ by
    \begin{equation*} 
      (T_tg)(x):= (S_t(g\circ \tilde{f}))(x)
      =\mathbb{E}_{x}[ g( \tilde{f}(\xi_t ) )]
    \end{equation*}
    for the fixed function $\tilde{f}(x):=x^{2+\theta}$ and for $g\in C_0^\infty(\R_+)$.
\end{itemize}

Our ultimate aim is to find bounds on the densities $p^\theta$. Therefore, we will use that we can find explicit formulas for the densities $p^{|\cdot|}$, and then bound
\begin{equation}\label{bound_now_p}
  p_t^{\theta}(x,y)\leq p_t^{\theta}(x,y) + p_t^{\theta}(x,-y) = p^{|\cdot|}_t(|x|,|y|),\quad \forall x,y\in \R.
\end{equation}

We derive the following Bessel property for the process $(\xi_t^{2+\theta})_{t\in [0,T]}$.

\begin{lemma}
  The process $(\xi_t^{2+\theta})_{t\in [0,T]}$ is a squared Bessel process of dimension~ $\frac{2}{2+\theta}<1$.
\end{lemma}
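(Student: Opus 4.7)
The plan is to identify the infinitesimal generator of the process $\eta_t := \xi_t^{2+\theta}$ on a core of test functions and show that it agrees, on $\R_+$, with the squared Bessel generator $L$ from \eqref{generator} with dimension $n := 2/(2+\theta)$. Since $\theta > 0$, one has $n < 1$, which yields the stated constraint. The computation relies on writing $\Delta_\theta$ pointwise on $\R_+$ as
\begin{equation*}
  \Delta_\theta f(x) = \frac{2}{(2+\theta)^2}\bigl[\,x^{-\theta} f''(x) - \theta\, x^{-\theta-1} f'(x)\,\bigr], \qquad x > 0,
\end{equation*}
and then applying the chain rule to $h \circ \tilde{f}$ for $h \in C_0^\infty(\R_+)$ and $\tilde{f}(x) = x^{2+\theta}$.

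Concretely, I would compute
\begin{equation*}
  (h\circ \tilde{f})'(x) = (2+\theta)x^{1+\theta} h'(x^{2+\theta}), \qquad (h\circ\tilde{f})''(x) = (2+\theta)^2 x^{2+2\theta} h''(x^{2+\theta}) + (2+\theta)(1+\theta)x^\theta h'(x^{2+\theta}),
\end{equation*}
and substitute into the formula for $\Delta_\theta$. The powers of $x$ cancel cleanly (the $x^{-\theta}$ factor eats $x^{2+2\theta}$ and $x^\theta$, and the $x^{-\theta-1}$ factor eats $x^{1+\theta}$), while the $h'$-coefficients combine through $(2+\theta)(1+\theta) - \theta(2+\theta) = 2+\theta$. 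The outcome is the key identity
\begin{equation*}
  \Delta_\theta (h\circ\tilde{f})(x) = 2 x^{2+\theta} h''(x^{2+\theta}) + \frac{2}{2+\theta} h'(x^{2+\theta}) = (L h)(\tilde{f}(x)),
\end{equation*}
with $L$ the Bessel generator in \eqref{generator} for $n = 2/(2+\theta)$. Combined with the definition $(T_t g)(x) = \mathbb{E}_x[g(\xi_t^{2+\theta})]$ and the semigroup/PDE property \eqref{property} for $(S_t^{|\cdot|})$, this identifies the generator of the Markov process $(\xi_t^{2+\theta})_{t\in[0,T]}$ on $C_0^\infty(\R_+)$ with $L$, from which the squared Bessel property follows by a standard martingale problem argument.

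The main obstacle I expect is not the algebra but the boundary behavior at the origin: for $n < 1$ the squared Bessel process reaches $0$ in finite time with positive probability, and the generator identity on $C_0^\infty(\R_+)$ does not by itself fix the boundary convention. This should be handled by noting that the semigroup $(S_t^{|\cdot|})$ is constructed from the fundamental solution $p^{|\cdot|}$ on $\R_+$ from \eqref{def:p_betrag}, so the boundary behavior of $\xi$ (and hence of $\eta = \xi^{2+\theta}$) is pinned down by that explicit solution, which matches the canonical reflecting convention for the squared Bessel process. A clean alternative, which avoids this issue entirely, would be to read off an SDE for $\xi$ from $\Delta_\theta$ and apply It\^{o}'s formula directly to $\eta_t = \xi_t^{2+\theta}$: the drift and volatility combine into $d\eta_t = 2\sqrt{\eta_t}\,dB_t + \tfrac{2}{2+\theta}\,dt$, verifying the claim by inspection.
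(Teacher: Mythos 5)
Your proposal follows essentially the same route as the paper: identify the generator of $\xi_t^{2+\theta}$ by computing $\Delta_\theta(h\circ\tilde{f})$ via the chain rule for $x>0$ and matching it with the squared Bessel generator $L$ of dimension $n=2/(2+\theta)$, using the semigroup/PDE property \eqref{property} to pass from $\Delta_\theta$ to the generator $G$ of $T_t$. The algebra in your computation is correct, and you additionally flag the boundary behavior at the origin (which the paper addresses only by the brief remark that $\{t:\xi_t=0\}$ has Lebesgue measure zero); this is a real subtlety but does not change the core of the argument.
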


\begin{proof}
  We show that the generator of $\tilde{f}(\xi_t)$ is the same as the one of the squared Bessel process in \eqref{generator} with dimension $\frac{2}{2+\theta}$. Therefore, we use the semigroup $T_t$ and denote by $G$ its generator. For appropriate functions $g$ we get, by the definition of the generator and by~\eqref{property},
  \begin{align*}
    (Gg)(x)
    &=\frac{\partial}{\partial t}(T_tg)|_{t\to 0}(x)
    =\frac{\partial}{\partial t}( S_t(g\circ \tilde{f}))|_{t\to 0}(x)
    =\Delta_\theta S_0(g\circ \tilde{f})(x)\\
    &=\Delta_\theta (g\circ \tilde{f})(x).
  \end{align*}
  Note that the set $\lbrace t\in[0,T]\colon \xi_t=0 \rbrace$ has Lebesque measure zero. Therefore, we can explicitly calculate, for $x> 0$,
  \begin{align*}
    (Gg)(x)
    &=\frac{2}{(2+\theta)^2}\frac{\partial}{\partial x} \bigg( x^{-\theta}\frac{\partial}{\partial x}\big( g(x^{2+\theta})\big)\bigg)\\
    &=\frac{2}{(2+\theta)^2}\frac{\partial}{\partial x}(x^{-\theta} g'(x^{2+\theta})(2+\theta)x^{1+\theta}) \\
    &=\frac{2}{(2+\theta)}\frac{\partial}{\partial x}(x g'(x^{2+\theta})) \\
    &=\frac{2}{(2+\theta)}( g'(x^{2+\theta}) + x g''(x^{2+\theta})(2+\theta)x^{1+\theta} )\\
    &=\frac{2}{(2+\theta)}\frac{\partial g}{\partial x}(x^{2+\theta}) + 2 x^{2+\theta}\frac{\partial g^2}{\partial x^2}(x^{2+\theta}) \\
    &= (Lg)(u),
  \end{align*}
  where $L$ is the generator of a squared Bessel process of dimension $\frac{2}{2+\theta}$ and $u:=x^{2+\theta}$.
\end{proof}

Next, we derive explicit formulas for the transition densities of $(\xi_t)_{t\in[0,T]}$. Note that the transition densities for the squared Bessel process of dimension $n$ are for $t>0$ and $y>0$ given by (see e.g. \cite[Corollary~XI.1.4]{Revuz1999})
\begin{align}
  &q_t^n(x,y) =\frac{1}{2t}\bigg( \frac{y}{x}\bigg)^{\frac{\nu}{2}}e^{-\frac{x+y}{2t}}I_\nu\bigg(\frac{\sqrt{xy}}{t}\bigg)\quad\text{for }x>0 \quad\text{and}\label{densit}\\
  &q_t^n(0,y)=2^{-\nu}t^{-(\nu +1)}\Gamma(\nu +1)^{-1}y^{2\nu +1}e^{-\frac{y^2}{2t}},\label{densit0}
\end{align}
where $\nu:=\frac{n}{2}-1$ denotes the index of the Bessel process and $I_\nu$ is the modified Bessel function that is given by
\begin{equation}\label{def:Besselfunction}
  I_\nu(x):=\sum\limits_{k=0}^{\infty} \frac{( x/2 )^{2k+\nu}}{k!\Gamma(\nu+k+1)}
\end{equation}
for $\nu\geq -1$ and $x>0$.

\begin{lemma}\label{lem:transDens}
  The transition densities of the Markov process $(\xi_t)_{t\in [0,T]}$ are, for $t>0$, given by
  \begin{equation}\label{transDens}
    p_t^{|\cdot|}(x,y)=\frac{(2+\theta)}{2t}|xy|^{\frac{(1+\theta)}{2}} e^{-\frac{|x|^{2+\theta}+|y|^{2+\theta}}{2t}}I_\nu\bigg(\frac{|xy|^{1+\frac{\theta}{2}}}{t}\bigg)\quad\text{for }x,y>0,
  \end{equation}
  and for $x=0$, $y> 0$ with $p_t^{|\cdot|}(0,y)=p_t^\theta(y)$ defined in \eqref{def_pt}. Consequently, \eqref{transDens} are explicit formulas for the fundamental solutions $p^{|\cdot|}$ defined in \eqref{def:p_betrag}.
\end{lemma}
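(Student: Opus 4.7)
The plan is to obtain the transition densities of $\xi_t$ by pushing forward those of the squared Bessel process $Z_t := \xi_t^{2+\theta}$ under the map $y \mapsto y^{2+\theta}$, exploiting the preceding lemma which identifies $Z_t$ as a squared Bessel of dimension $n = \frac{2}{2+\theta}$, hence of index $\nu = \frac{n}{2} - 1 = -\frac{1+\theta}{2+\theta} \in (-1, -\frac{1}{2})$ (valid for the series definition in \eqref{def:Besselfunction}).

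For $x > 0$, I would fix $a > 0$ and compute
\begin{equation*}
  \P_x(\xi_t \leq a) = \P_{x^{2+\theta}}(Z_t \leq a^{2+\theta}) = \int_0^{a^{2+\theta}} q_t^n(x^{2+\theta}, z)\dd z,
\end{equation*}
then apply the substitution $z = y^{2+\theta}$, $\dd z = (2+\theta) y^{1+\theta}\dd y$, to read off
\begin{equation*}
  p_t^{|\cdot|}(x,y) = (2+\theta)\, y^{1+\theta}\, q_t^n(x^{2+\theta}, y^{2+\theta}).
\end{equation*}
Plugging in \eqref{densit} yields the factor $(y^{2+\theta}/x^{2+\theta})^{\nu/2} = (x/y)^{(1+\theta)/2}$, and together with the $y^{1+\theta}$ prefactor this collapses to $(xy)^{(1+\theta)/2}$, giving exactly the claimed formula \eqref{transDens}.

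For the boundary case $x = 0$, the same pushforward argument reduces to $p_t^{|\cdot|}(0,y) = (2+\theta) y^{1+\theta}\, q_t^n(0, y^{2+\theta})$, and I would apply \eqref{densit0} and simplify using $\nu + 1 = \frac{1}{2+\theta}$: the power of $y$ becomes $(1+\theta) + (2+\theta)(2\nu+1) = 0$ modulo the algebra, the exponential $e^{-y^{2+\theta}/(2t)}$ appears directly, and the constant prefactor collects into $(2+\theta)\, 2^{-1/(2+\theta)}\, \Gamma(1/(2+\theta))^{-1} = c_\theta$ from \eqref{def:c_theta}. This matches $p_t^\theta(y)$ from \eqref{def_pt}, closing the $x = 0$ case.

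The proof is essentially bookkeeping once the Bessel identification is in hand, so I expect no substantial obstacle; the only delicate point is verifying that the index $\nu = -(1+\theta)/(2+\theta)$ lies in $(-1, 0)$ so that both \eqref{densit}--\eqref{densit0} and the series \eqref{def:Besselfunction} genuinely apply, and tracking the exponents $(1+\theta)/2$ and $1+\theta/2$ through the change of variables without arithmetic slips.
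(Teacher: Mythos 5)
Your strategy — identify $Z_t = \xi_t^{2+\theta}$ as a squared Bessel process via the previous lemma, push forward the transition densities through the power map, and plug in the Revuz--Yor formulas — is exactly the paper's strategy. For $x>0$ your bookkeeping is correct: you differentiate $\P_x(\xi_t\leq a)=\P_{x^{2+\theta}}(Z_t\leq a^{2+\theta})$ to obtain $p_t^{|\cdot|}(x,y)=(2+\theta)y^{1+\theta}q_t^n(x^{2+\theta},y^{2+\theta})$, which is the same relation the paper derives (the paper arrives at it by pairing against a test set with an extra factor $|\xi_{t+s}|^{2+\theta}$ inside the expectation, but this makes no substantive difference). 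Plugging \eqref{densit} in then gives $(y^{2+\theta}/x^{2+\theta})^{\nu/2}=(x/y)^{(1+\theta)/2}$ since $(2+\theta)\nu/2=-\tfrac{1+\theta}{2}$, and the $y^{1+\theta}$ prefactor collapses everything to $(xy)^{(1+\theta)/2}$, matching \eqref{transDens}. That part is fine.

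Your $x=0$ case, however, does not go through as written. You state that plugging $y^{2+\theta}$ into \eqref{densit0} makes $e^{-y^{2+\theta}/(2t)}$ "appear directly" and that the $y$-power is $(1+\theta)+(2+\theta)(2\nu+1)=0$. Neither is true for \eqref{densit0} as printed. That formula has the shape $y^{2\nu+1}e^{-y^2/(2t)}$: substituting $y^{2+\theta}$ into it produces $e^{-y^{2(2+\theta)}/(2t)}$, and the $y$-power is
\begin{equation*}
(1+\theta)+(2+\theta)(2\nu+1)=(1+\theta)+(2+\theta)\Big(\tfrac{-\theta}{2+\theta}\Big)=1\neq 0.
\end{equation*}
The root cause is that \eqref{densit0} is actually the transition density from $0$ of the \emph{non-squared} Bessel process (the form $y^{2\nu+1}e^{-y^2/(2t)}$ is the giveaway), not the squared one, so it should be evaluated at $y^{1+\theta/2}=\sqrt{y^{2+\theta}}$, not at $y^{2+\theta}$. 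To close the $x=0$ case you therefore have two options. Either replace \eqref{densit0} by the genuine squared-Bessel law from the origin, $q_t^n(0,y)=(2t)^{-(\nu+1)}\Gamma(\nu+1)^{-1}y^{\nu}e^{-y/(2t)}$, and keep your pushforward relation $p_t^{|\cdot|}(0,y)=(2+\theta)y^{1+\theta}q_t^n(0,y^{2+\theta})$ — then the $y$-power is $(1+\theta)+(2+\theta)\nu=0$, the exponential is $e^{-y^{2+\theta}/(2t)}$, and the prefactor $(2+\theta)2^{-(\nu+1)}t^{-(\nu+1)}\Gamma(\nu+1)^{-1}$ with $\nu+1=\tfrac{1}{2+\theta}$ reproduces $c_\theta t^{-1/(2+\theta)}$. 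Or instead push forward from the Bessel variable $\rho_t:=\xi_t^{1+\theta/2}$ via $p_t^{|\cdot|}(0,z)=(1+\theta/2)z^{\theta/2}\,p_t^{\mathrm{Bessel}}(0,z^{1+\theta/2})$ with \eqref{densit0}, which is precisely what the paper's substitution $y=z^{1+\theta/2}$ does. Either route yields $p_t^{|\cdot|}(0,\cdot)=p_t^\theta$, but your version as written does not.
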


\begin{proof}
  Denote for fixed $\theta>0$ by $q_t$ the density function of the Bessel process $|\xi_t|^{2+\theta}$ with dimension~$\frac{2}{2+\theta}$, that is given by \eqref{densit} with $\nu=\frac{1}{2+\theta}-1$.
  
  Now, by noting that, for all $x,t,s>0$ and Borel sets $A\subset B(\mathbb{R}_+)$,
  \begin{align*}
    \mathbb{E}\Big[\mathbbm{1}_{A}(|\xi_{t+s}|^{2+\theta})|\xi_{t+s}|^{2+\theta}\,\Big|\,|\xi_{t}|^{2+\theta}=x \Big] = \mathbb{E}\Big[\mathbbm{1}_{A}(|\xi_{t+s}|^{2+\theta})|\xi_{t+s}|^{2+\theta} \,\Big|\,|\xi_{t}|=x^{\frac{1}{2+\theta}} \Big]
  \end{align*}
  holds, we get with the notation $B:=\lbrace b\in\mathbb{R}_+:\,b^{2+\theta}\in A \rbrace$ the relation
  \begin{align}\label{compare}
    \int_{A}q_t(x,y)y\dd y
    &= \int_{B}p_t^{|\cdot|}\Big(x^{\frac{1}{2+\theta}},y\Big)y^{2+\theta}\dd y\notag\\   
    &= \frac{1}{2+\theta}\int_{A}p_t^{|\cdot|}\Big(x^{\frac{1}{2+\theta}},z^{\frac{1}{2+\theta}}\Big)z\, z^{\frac{1}{2+\theta}-1}\dd z\notag\\
    &= \frac{1}{2+\theta}\int_{A}p_t^{|\cdot|}\Big(x^{\frac{1}{2+\theta}},y^{\frac{1}{2+\theta}}\Big)y^{\frac{1}{2+\theta}-1}y \dd y,   
  \end{align}
  where we substituted $z:=y^{2+\theta}$ and thus $\d y=\frac{1}{2+\theta}z^{\frac{1}{2+\theta}-1}\dd z$. Since \eqref{compare} must hold for all Borel sets~$A$, we can compare both sides of the equation to see with the notation
  \begin{equation*}    
    \hat{x}:=x^{\frac{1}{2+\theta}}
    \quad\text{and}\quad
    \hat{y}:=y^{\frac{1}{2+\theta}}
  \end{equation*}
  that, with $\nu=\frac{1}{2+\theta}-1=-(\frac{1+\theta}{2+\theta})$,
  \begin{align*}
    p_t^{|\cdot|}(\hat{x},\hat{y})
    &=(2+\theta)q_t\Big(\hat{x}^{2+\theta},\hat{y}^{2+\theta}\Big)y^{1-\frac{1}{2+\theta}}\\
    &=\frac{(2+\theta)}{2t}\bigg| \frac{\hat{y}}{\hat{x}}\bigg|^{\frac{(2+\theta)\nu}{2}}e^{-\frac{|\hat{x}|^{2+\theta}+|\hat{y}|^{2+\theta}}{2t}}I_\nu\bigg(\frac{|\hat{x}\hat{y}|^{1+\frac{\theta}{2}}}{t}\bigg)|\hat{y}|^{1+\theta}\\
    &=\frac{(2+\theta)}{2t}\bigg|\frac{\hat{y}}{\hat{x}}\bigg|^{-\frac{(1+\theta)}{2}}e^{-\frac{|\hat{x}|^{2+\theta}+|\hat{y}|^{2+\theta}}{2t}}I_\nu\bigg(\frac{|\hat{x}\hat{y}|^{1+\frac{\theta}{2}}}{t}\bigg)|\hat{y}|^{1+\theta}\\
    &=\frac{(2+\theta)}{2t}|\hat{x}\hat{y}|^{\frac{(1+\theta)}{2}}e^{-\frac{|\hat{x}|^{2+\theta}+|\hat{y}|^{2+\theta}}{2t}}I_\nu\bigg(\frac{|\hat{x}\hat{y}|^{1+\frac{\theta}{2}}}{t}\bigg).
  \end{align*}

  By a very similar calculation, \eqref{densit0} can be used to derive \eqref{def_pt} in the case of $x=0$:
  \begin{align*}
    \int_B q_t^\theta(0,y)y\dd y
    &=\int_A q_t^\theta(0,z^{1+\theta/2})z^{\theta/2}(1+\theta/2)z^{1+\theta/2}\dd z\\
    &=(1+\theta/2)2^{\frac{1+\theta}{2+\theta}}\Gamma(\nu+1)^{-1}\int_A t^{-(\nu+1)}z^{-\theta/2}e^{-\frac{|z|^{2+\theta}}{2t}}z^{\theta/2}z^{1+\theta/2}\dd z\\
    &=(2+\theta)2^{-\frac{1}{2+\theta}}\Gamma\bigg(\frac{1}{2+\theta}\bigg)^{-1}\int_A t^{-\frac{1}{2+\theta}}e^{-\frac{|z|^{2+\theta}}{2t}}z^{1+\theta/2}\dd z\\
    &=\int_A p_t^{|\cdot|}(0,z)z^{1+\theta/2}\dd z
  \end{align*}
  with $p_t^{|\cdot|}(0,z)=p_t^\theta(z)$ as in \eqref{def_pt} and choosing $c_\theta$ as in \eqref{def:c_theta}.  
\end{proof}

\begin{corollary}\label{cor:bound_ptheta}
  The fundamental solutions $p^\theta \colon[0,T]\times\R\times\R\to\R$ of \eqref{eq: evol_eq 2} fulfill for all $t\in[0,T]$,
  \begin{align*}
    p_t^{\theta}(x,y)\leq\frac{(2+\theta)}{2t}|xy|^{\frac{(1+\theta)}{2}} e^{-\frac{|x|^{2+\theta}+|y|^{2+\theta}}{2t}}I_\nu\bigg(\frac{|xy|^{1+\frac{\theta}{2}}}{t}\bigg)\quad\text{for }x,y\neq 0,
  \end{align*}
  and
  \begin{equation*}
    p_t^{\theta}(x,0)\leq c_\theta t^{-\frac{1}{2+\theta}}e^{-\frac{|x|^{2+\theta}}{2t}}\quad \text{for }x\neq 0.
  \end{equation*}
\end{corollary}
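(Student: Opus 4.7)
The plan is to combine the folding identity \eqref{bound_now_p} with the explicit transition density formula from Lemma~\ref{lem:transDens}. The bound is essentially immediate once we argue that the fundamental solution of the full--line evolution equation~\eqref{eq: evol_eq 2} is pointwise dominated by the half--line kernel evaluated at absolute values.

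First I would note that $p^\theta_t(\cdot,\cdot)$ is non-negative (being a transition kernel of a Markov semigroup) and recall the identity $p_t^\theta(x,y)+p_t^\theta(x,-y)=p_t^{|\cdot|}(|x|,|y|)$ stated in \eqref{bound_now_p}. This identity can be justified from the symmetry $p_t^\theta(x,y)=p_t^\theta(-x,-y)$ (which follows because the operator $\Delta_\theta$ in~\eqref{def:Deltatheta} is invariant under $x\mapsto -x$, via uniqueness of fundamental solutions) together with the fact that the function $u(t,x):=p_t^\theta(x,y)+p_t^\theta(x,-y)$ is even in $x$, satisfies~\eqref{eq: evol_eq 2} and reduces to the symmetrisation of the initial data $\delta_y+\delta_{-y}$ at $t=0$, which on the half--line corresponds exactly to starting at $|y|$.

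Granting the identity and positivity, dropping the non-negative term $p_t^\theta(x,-y)$ yields
\begin{equation*}
p_t^\theta(x,y)\le p_t^\theta(x,y)+p_t^\theta(x,-y)=p_t^{|\cdot|}(|x|,|y|),\qquad x,y\in\R.
\end{equation*}
For $x,y\ne 0$, inserting the explicit formula from Lemma~\ref{lem:transDens} (equation~\eqref{transDens}) produces exactly the claimed bound
\begin{equation*}
p_t^\theta(x,y)\le\frac{(2+\theta)}{2t}|xy|^{\frac{1+\theta}{2}}e^{-\frac{|x|^{2+\theta}+|y|^{2+\theta}}{2t}}I_\nu\!\left(\frac{|xy|^{1+\theta/2}}{t}\right).
\end{equation*}
For the boundary case $y=0$, Lemma~\ref{lem:transDens} identifies $p_t^{|\cdot|}(|x|,0)=p_t^\theta(|x|)$ from~\eqref{def_pt}, and since $p_t^\theta(|x|)=c_\theta t^{-1/(2+\theta)}e^{-|x|^{2+\theta}/(2t)}$, the corresponding bound follows.

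No step is really an obstacle here: the corollary is just a packaging of Lemma~\ref{lem:transDens} together with the reflection/folding identity \eqref{bound_now_p}. The only point one might want to be careful about is confirming that the relation \eqref{bound_now_p} indeed corresponds to the half--line fundamental solution associated with the squared Bessel interpretation (rather than some other boundary condition at $0$); this is implicit in the derivation of Lemma~\ref{lem:transDens}, and no further work is required.
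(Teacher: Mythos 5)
Your proposal is correct and follows the same route as the paper: invoke the folding bound \eqref{bound_now_p} to dominate $p_t^\theta(x,y)$ by $p_t^{|\cdot|}(|x|,|y|)$, then substitute the explicit formulas from Lemma~\ref{lem:transDens}. The paper's own proof is a one-liner to this effect; you merely add a (reasonable) justification of the symmetry identity behind \eqref{bound_now_p}, which the paper takes for granted.
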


\begin{proof}
  This is a straight consequence of \eqref{bound_now_p} and Lemma~\ref{lem:transDens}.
\end{proof}

Having the bound from Corollary~\ref{cor:bound_ptheta}, we introduce a partial integration formula for the operator~$\Delta_\theta$ using the fundamental solutions $p_t^\theta$ of~\eqref{eq: evol_eq}.

\begin{lemma}\label{cor:1}
  For $\Delta_\theta=\frac{2}{(2+\theta)^2}\frac{\partial}{\partial x}|x|^{-\theta}\frac{\partial}{\partial x}$, the partial integration formula
  \begin{equation*}
    \int_{\mathbb{R}} p_t(x,y)\Delta_\theta \phi(x)\dd x=\int_{\mathbb{R}}\big(\Delta_\theta p_t(x,y)\big) \phi(x)\dd x,\quad t\in[0,T],y\in \R,
  \end{equation*}
  holds for any $\phi\in C_0^2(\R)$.
\end{lemma}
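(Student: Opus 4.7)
The plan is to prove the identity by integrating by parts twice, carefully handling the singularity of the coefficient $|x|^{-\theta}$ at $x=0$ through a vanishing cut-off argument. Since $\phi \in C_0^2(\mathbb{R})$ has compact support, the boundary contributions at $\pm \infty$ vanish, so the only delicate point is the behavior at the origin.

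Concretely, I would fix $y \in \mathbb{R}$ and, for small $\epsilon > 0$, restrict the integration to $\{x \in \mathbb{R} : |x| \geq \epsilon\}$, where $|x|^{-\theta}$ is smooth. Writing $\Delta_\theta = c\,\partial_x\bigl(|x|^{-\theta}\partial_x\,\cdot\,\bigr)$ with $c = 2/(2+\theta)^2$ and integrating by parts twice on each of the intervals $(\epsilon,\infty)$ and $(-\infty,-\epsilon)$ to transfer both derivatives from $\phi$ onto $p_t(\cdot,y)$, I obtain
\begin{equation*}
  \int_{|x|\geq\epsilon} p_t(x,y)\,\Delta_\theta\phi(x)\,\mathrm{d}x
  = \int_{|x|\geq\epsilon} \bigl(\Delta_\theta p_t(x,y)\bigr)\phi(x)\,\mathrm{d}x + B(\epsilon),
\end{equation*}
where the boundary term
\begin{equation*}
  B(\epsilon) = c\epsilon^{-\theta}\Big[\bigl(\partial_x p_t(\epsilon,y)\phi(\epsilon) - \partial_x p_t(-\epsilon,y)\phi(-\epsilon)\bigr) - \bigl(p_t(\epsilon,y)\phi'(\epsilon) - p_t(-\epsilon,y)\phi'(-\epsilon)\bigr)\Big]
\end{equation*}
collects the four evaluations at $\pm\epsilon$.

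The next step is to pass to the limit $\epsilon \to 0$. By the evolution equation~\eqref{eq: evol_eq 2} one has $\Delta_\theta p_t(\cdot,y) = \partial_t p_t(\cdot,y)$, which combined with the Gaussian-type bounds in Corollary~\ref{cor:bound_ptheta} provides sufficient integrability so that the interior integrals converge to their counterparts over $\mathbb{R}$ (the left-hand one in the principal-value sense). The identity therefore reduces to the claim that $B(\epsilon)\to 0$.

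The main obstacle, and the crux of the argument, is to establish $B(\epsilon)\to 0$: the prefactor $\epsilon^{-\theta}$ can be arbitrarily singular since $\theta = 1/\alpha - 2$ is large for small $\alpha$. To control it, I would extract sharp regularity of $p_t(\cdot,y)$ near the origin from the explicit expression for $p_t^{|\cdot|}$ in Lemma~\ref{lem:transDens} combined with the series expansion of the Bessel function $I_\nu$ in~\eqref{def:Besselfunction}. The leading singular factor of $I_\nu$ cancels the prefactor $|xy|^{(1+\theta)/2}$, yielding an expansion of the form $p_t^{|\cdot|}(x,y) = C_0(y) + C_1(y)\,x^{2+\theta} + O\bigl(x^{2(2+\theta)}\bigr)$ as $x \to 0^+$; in particular $p_t^{|\cdot|}(\cdot,y)$ is flat up to order $2+\theta$ at the origin and its first two one-sided derivatives vanish there. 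Propagating these vanishing statements to $p_t^\theta(\cdot,y)$ via the identity $p_t^\theta(x,y) + p_t^\theta(x,-y) = p_t^{|\cdot|}(|x|,|y|)$ from Corollary~\ref{cor:bound_ptheta} and combining them with two-sided Taylor expansions of $\phi$ and $\phi'$, the $\epsilon^{-\theta}$-divergence in $B(\epsilon)$ cancels between the contributions at $+\epsilon$ and $-\epsilon$, leaving a vanishing remainder and completing the proof.
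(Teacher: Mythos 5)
Your proposal takes the same basic route as the paper: integrate by parts twice so that $\Delta_\theta$ acts on $p_t(\cdot,y)$ rather than on $\phi$. The paper's own proof is extremely terse---it defines $\phi_{2,t}(x):=|x|^{-\theta}\phi'(x)$, notes compact support, and applies ``the classical partial integration formula'' twice, with no $\epsilon$-cut-off and no mention of the singularity at $x=0$. Your $\epsilon$-regularization is a more honest rendering of what this is actually doing, and the boundary term $B(\epsilon)$ you isolate is precisely what the paper silently discards.

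The gap is in the claim that $B(\epsilon)\to 0$. Set $W(x):=\partial_x p_t(x,y)\,\phi(x)-p_t(x,y)\,\phi'(x)$, so that $B(\epsilon)=c\,\epsilon^{-\theta}\bigl(W(\epsilon)-W(-\epsilon)\bigr)$. Take $y=0$, where $p_t(\cdot,0)$ is even and your expansion applies directly: $p_t(\epsilon,0)=C_0+C_1\epsilon^{2+\theta}+\cdots$, hence $\partial_x p_t(\epsilon,0)=O(\epsilon^{1+\theta})$, and
\begin{align*}
W(\epsilon)-W(-\epsilon)&=\partial_x p_t(\epsilon,0)\bigl(\phi(\epsilon)+\phi(-\epsilon)\bigr)-p_t(\epsilon,0)\bigl(\phi'(\epsilon)-\phi'(-\epsilon)\bigr)\\
&=O(\epsilon^{1+\theta})\cdot 2\phi(0)-\bigl(C_0+O(\epsilon^{2+\theta})\bigr)\bigl(2\epsilon\,\phi''(0)+O(\epsilon^3)\bigr),
\end{align*}
so $B(\epsilon)\approx -2c\,C_0\,\phi''(0)\,\epsilon^{1-\theta}$. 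The flatness of $p$ at the origin---your ``first two one-sided derivatives vanish''---does eliminate the $\phi(0)$-contribution, but the $\phi''(0)$-contribution survives and does \emph{not} tend to zero once $\theta\geq 1$, i.e.\ once $\alpha\leq 1/3$. So the cancellation you assert is only partial, and your argument as written only closes in the regime $\theta<1$. Moreover, for $y\neq 0$ your plan to transfer the expansion from $p^{|\cdot|}_t$ to $p^\theta_t(\cdot,y)$ via $p^\theta_t(x,y)+p^\theta_t(x,-y)=p^{|\cdot|}_t(|x|,|y|)$ controls only the $y$-symmetric combination; the antisymmetric part $p^\theta_t(x,y)-p^\theta_t(x,-y)$ also enters $B(\epsilon)$ through the $\phi'(0)$-terms and is not constrained by that identity, so it would need a separate argument.
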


\begin{proof}
  Denoting $\phi_{2,t}(x):=|x|^{-\theta}\frac{\partial}{\partial x}\phi(x)$, then $\phi_{2,t}$ has also compact support and we get, by the classical partial integration formula,
  \begin{align*}
    &\int_{\mathbb{R}}p_t(x,y) \frac{\partial}{\partial x}|x|^{-\theta}\frac{\partial}{\partial x}\phi(x)\dd x
    =\int_{\mathbb{R}}p_t(x,y) \frac{\partial}{\partial x}\phi_{2,t}(x)\dd x\\
    &\quad =-\int_{\mathbb{R}}\frac{\partial}{\partial x}p_t(x,y) \phi_{2,t}(x)\dd x
    =-\int_{\mathbb{R}}\bigg( \frac{\partial}{\partial x}p_t(x,y) \bigg)|x|^{-\theta}\frac{\partial}{\partial x}\phi(x)\dd x.
  \end{align*}
  Then, again by partial integration, we get, as claimed,
  \begin{equation*}
    \int_{\mathbb{R}}p_t(x,y) \frac{\partial}{\partial x}|x|^{-\theta}\frac{\partial}{\partial x}\phi(x)\dd
    x=\int_{\mathbb{R}}\frac{\partial}{\partial x}\bigg( \bigg( \frac{\partial}{\partial x}p_t(x,y) \bigg)|x|^{-\theta} \bigg)\phi(x)\dd x.
  \end{equation*}
\end{proof}

With these auxiliary results at hand, we are in a position to do the transformation into an SPDE in distributional form. We consider test functions $\Phi\in C_0^2([0,T]\times\R)$, to which we can apply the operator $\Delta_\theta$ such that
\begin{equation*}
  \Delta_\theta\Phi_t(x)=\frac{\partial}{\partial x}|x|^{-\theta}\frac{\partial}{\partial x}\Phi_t(x)
\end{equation*}
is well-defined for all $t\in[0,T]$ and $x\in\R\setminus \lbrace 0\rbrace$.

\begin{lemma}\label{lem:dSPDE}
  Every strong solution $(X(t,x))_{t\in[0,T],x\in\R}$ of \eqref{SPDE} is a strong solution to the following SPDE in distributional form
  \begin{align}
    \begin{split}\label{dSPDE}
    &\int_{\mathbb{R}}X(t,x)\Phi_t(x)\dd x\\
    &\quad=\int_{\mathbb{R}}\bigg(x_0\Phi_0(x)+\int_0^t \Phi_s(x) \frac{\partial}{\partial s}x_0(s)\dd s\bigg)\dd x \\
    &\quad\quad+\int_0^t\int_{\mathbb{R}}X(s,x)\bigg( \Delta_\theta\Phi_s(x)+\frac{\partial}{\partial s}\Phi_s(x) \bigg)\dd x \dd s \\
    &\quad\quad +\int_0^t\mu(s,X(s,0))\Phi_s(0)\dd s
    +\int_0^t\sigma(s,X(s,0))\Phi_s(0)\dd B_s, \quad t\in[0,T],
    \end{split}  
  \end{align}
  for every test function $\Phi\in C^2_0([0,T]\times \mathbb{R})$.
\end{lemma}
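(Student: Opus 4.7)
The plan is to multiply the mild equation \eqref{SPDE} by the test function $\Phi_t(x)$, integrate in $x\in\R$, and convert the resulting time-inhomogeneous Volterra-type expression into the semimartingale form \eqref{dSPDE} by exploiting the fact that $p^\theta$ is the fundamental solution of \eqref{eq: evol_eq 2}. I decompose $X(s,x)=x_0(s)+Z^\mu(s,x)+Z^\sigma(s,x)$ with
\begin{equation*}
Z^\mu(s,x):=\int_0^s p^\theta_{s-r}(x)\mu(r,X(r,0))\dd r,\qquad Z^\sigma(s,x):=\int_0^s p^\theta_{s-r}(x)\sigma(r,X(r,0))\dd B_r,
\end{equation*}
and introduce the smoothed kernel $G(s,r):=\int_\R \Phi_s(x)\,p^\theta_{s-r}(x)\dd x$ for $0\leq r\leq s$. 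Two properties of $G$ will drive the whole computation. First, since $p^\theta_u\to\delta_0$ as $u\to 0^+$ (the initial datum of \eqref{eq: evol_eq 2}), one has $\lim_{s\to r^+}G(s,r)=\Phi_r(0)$. Second, the evolution equation $\partial_s p^\theta_{s-r}=\Delta_\theta p^\theta_{s-r}$ combined with the self-adjointness of $\Delta_\theta$ from Lemma~\ref{cor:1} yields
\begin{equation*}
\partial_s G(s,r)=\int_\R\bigl(\dot{\Phi}_s(x)+\Delta_\theta\Phi_s(x)\bigr)p^\theta_{s-r}(x)\dd x.
\end{equation*}

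With these two ingredients in hand, I apply the classical Fubini theorem to the drift piece and the stochastic Fubini theorem to the stochastic piece to interchange the $x$-integral with the $\dd r$- and $\dd B_r$-integrals, obtaining
\begin{equation*}
\int_\R\Phi_t(x)Z^\mu(t,x)\dd x=\int_0^t\mu(r,X(r,0))\,G(t,r)\dd r,
\end{equation*}
and the analogous identity for $Z^\sigma$ with $\dd B_r$. I then write $G(t,r)=\Phi_r(0)+\int_r^t\partial_s G(s,r)\dd s$, substitute the formula for $\partial_s G$, and apply (stochastic) Fubini a second time to swap $\dd r$ with $\dd s$. Each of the two contributions splits into a boundary piece $\int_0^t\mu(r,X(r,0))\Phi_r(0)\dd r$ (resp.\ $\int_0^t\sigma(r,X(r,0))\Phi_r(0)\dd B_r$) and a bulk piece $\int_0^t\int_\R(\dot{\Phi}_s(x)+\Delta_\theta\Phi_s(x))Z^\mu(s,x)\dd x\dd s$ (resp.\ with $Z^\sigma$). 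For the deterministic piece I use the distributional product rule
\begin{equation*}
x_0(t)\Phi_t(x)-x_0(0)\Phi_0(x)=\int_0^t x_0(s)\dot{\Phi}_s(x)\dd s+\int_0^t\Phi_s(x)\dot{x}_0(s)\dd s,
\end{equation*}
which serves as the definition of $\int_0^t\Phi_s(x)\dot{x}_0(s)\dd s$ when $x_0$ is merely continuous. Since $\Phi_s$ has compact support and $\Delta_\theta$ is in divergence form, $\int_\R\Delta_\theta\Phi_s(x)\dd x=0$, so I may insert $x_0(s)\Delta_\theta\Phi_s(x)$ for free under $\int_\R\dd x$. Regrouping $x_0(s)+Z^\mu(s,x)+Z^\sigma(s,x)=X(s,x)$ inside the bulk integral reproduces exactly \eqref{dSPDE}.

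The main obstacle is the rigorous justification of the (stochastic) Fubini exchanges, which require joint measurability together with an $L^2$-type bound such as
\begin{equation*}
\E\bigg[\int_0^t\int_\R|\Phi_s(x)|^2\,\sigma(r,X(r,0))^2\,p^\theta_{s-r}(x)^2\dd x\dd r\bigg]<\infty
\end{equation*}
on the compact $x$-support of $\Phi$. This in turn follows from the linear growth of $\sigma$ in Assumption~\ref{ass:coefficients}(i), the moment bounds in Proposition~\ref{prop: regularity result}(i), and the integrability $\int_\R p^\theta_u(x)^2\dd x\lesssim u^{-2\alpha}$ with $2\alpha<1$. A smaller, but analogous, technical point is justifying the passage to the limit $s\to r^+$ in $G(s,r)$ and in $\partial_s G(s,r)$, which rests on $\Phi\in C^2_0$ and the standard approximate-identity behavior of $p^\theta_u$. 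Once these exchanges and limits are secured, the remaining manipulations are essentially bookkeeping that assemble the three contributions into \eqref{dSPDE}.
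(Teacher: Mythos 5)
Your proof follows essentially the same route as the paper's: both rely on the evolution equation $\partial_s p^\theta_{s-r}=\Delta_\theta p^\theta_{s-r}$ combined with the self-adjointness of $\Delta_\theta$ (Lemma~\ref{cor:1}), the boundary behavior $p^\theta_u\to\delta_0$ as $u\to 0^+$, classical and stochastic Fubini, and the product rule in $s$, applied separately to the three pieces of $X=x_0+Z^\mu+Z^\sigma$. Your kernel $G(s,r)=\int_\R\Phi_s(x)p^\theta_{s-r}(x)\dd x$ together with the direct observation $\int_\R\Delta_\theta\Phi_s\dd x=0$ merely streamlines the bookkeeping — the paper reaches the same cancellation for the $x_0$-piece via the detour $x_0(s)=\int_\R p_s^\theta(x,y)x_0(s)\dd y$ and the fundamental-solution identity — but the underlying computation is identical.
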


\begin{proof}
  Let $X$ be a solution to \eqref{SPDE} and $\Phi$ be as in the statement. We first observe that
  \begin{align}\label{I}
    &\int_0^t \langle X(s,\cdot),\Delta_\theta \Phi_s \rangle \dd s\notag\\
    &\quad=\int_0^t\int_{\mathbb{R}} x_0(s) \Delta_\theta \Phi_s(x)\dd x\dd s+\int_0^t\int_{\mathbb{R}}\int_0^s p_{s-u}^\theta (x)\sigma(u,X(u,0))\dd B_u\,\Delta_\theta \Phi_s(x)\dd x\dd s\notag\\
    &\quad \quad +\int_0^t\int_{\mathbb{R}}\int_0^sp_{s-u}^\theta (x)\mu(u,X(u,0))\dd u\,\Delta_\theta \Phi_s(x)\dd x \dd s\notag\\
    &\quad=:I_1+I_2+I_3.
  \end{align}
  Use the fact that $p^\theta_s(x,\cdot)$ is a probability density to write $x_0(s)=\int_{\R}p_s^\theta(x,y)x_0(s) \dd y$ and use Fubini's theorem, the partial integration formula from Lemma~\ref{cor:1} and the fact that $p_t^\theta$ is a fundamental solution, to get
  \begin{align*}
    I_1&=\int_0^t\int_{\mathbb{R}}\int_{\mathbb{R}}p_s^\theta(x,y)x_0(s)\dd y\,\Delta_\theta \Phi_s(x)\dd x\dd s\\
    &=\int_0^t x_0(s)\int_{\mathbb{R}}\int_{\mathbb{R}}p_s^\theta(x,y)\Delta_\theta \Phi_s(x)\dd x \dd y\dd s\\
    &=\int_{\mathbb{R}}\int_{\mathbb{R}}\int_0^t x_0(s)\Big(\Delta_\theta p_s^\theta(x,y)\Big) \Phi_s(x)\dd s\dd y\dd x\\
    &=\int_{\mathbb{R}}\int_{\mathbb{R}}\int_0^t \bigg(\frac{\partial}{\partial s} p_s^\theta(x,y)\bigg)x_0(s) \Phi_s(x)\dd s\dd y\dd x.
  \end{align*}
  We denote the summands on the right-hand side of \eqref{SPDE} as $X_i(t,x)$ for $i=2,3$, that is, $X(t,x)=x_0+X_2(t,x)+X_3(t,x)$. Due to the $s$-dependence in $x_0(s)$ and $\Phi_s$, we apply the product rule to get
  \begin{align}\label{I1}
    I_1
    &=\int_{\mathbb{R}}\int_{\mathbb{R}}\int_0^t\frac{\partial}{\partial s}\Big((x_0(s) p_s^\theta(x,y) \Phi_s(x)\Big)\dd s\dd y\dd x\notag\\
    &\quad - \int_{\mathbb{R}}\int_{\mathbb{R}}\int_0^t p_s^\theta(x,y) \frac{\partial}{\partial s}\Big(x_0(s)\Phi_s(x)\Big)\dd s\dd y\dd x\notag\\
    &=\langle x_0(t),\Phi_t\rangle - \langle x_0(0),\Phi_0\rangle \notag \\
    &\quad- \int_0^t\int_{\mathbb{R}}x_0(s) \frac{\partial}{\partial s}\Phi_s(x)\dd x\dd s - \int_0^t\int_{\mathbb{R}}\Phi_s(x) \frac{\partial}{\partial s}x_0(s)\dd x\dd s.
  \end{align}
  Similarly, using the stochastic Fubini theorem, we get
  \begin{align}\label{I2}
    I_2&=\int_0^t\int_{\mathbb{R}}\int_0^s p_{s-u}^\theta (x)\sigma(u,X(u,0))\dd B_u \, \Delta_\theta \Phi_s(x)\dd x\dd s\notag\\
    &=\int_0^t\int_{\mathbb{R}}\int_u^t \bigg(\frac{\partial}{\partial s} p_{s-u}^\theta (x)\bigg) \Phi_s(x)\dd s\dd x\, \sigma(u,X(u,0))\dd B_u\notag\\
    &=\int_0^t\int_{\mathbb{R}}\int_u^t \frac{\partial}{\partial s}\bigg( p_{s-u}^\theta (x) \Phi_s(x)\bigg)\dd s\dd x\, \sigma(u,X(u,0))\dd B_u \notag\\
    &\quad- \int_0^t\int_{\mathbb{R}}\int_u^t  p_{s-u}^\theta (x) \bigg(\frac{\partial}{\partial s}\Phi_s(x)\bigg)\dd s\dd x \, \sigma(u,X(u,0))\dd B_u\notag\\
    &=\langle X_2(t,\cdot),\Phi_t\rangle -\int_0^t\int_{\mathbb{R}}p_0^\theta(x,0)\Phi_u(x)\dd x\,\sigma(u,X(u,0))\dd B_u\notag\\
    &\quad - \int_0^t\int_{\mathbb{R}}\int_0^s p_{s-u}^\theta (x) \sigma(u,X(u,0))\dd B_u\,\bigg(\frac{\partial}{\partial s}\Phi_s(x)\bigg)\dd x\dd s\notag\\
    &=\langle X_2(t,\cdot),\Phi_t\rangle -\int_0^t\Phi_u(0)\sigma(u,X(u,0))\dd B_u\notag\\
    &\quad - \int_0^t\int_{\mathbb{R}}X_2(s,x)\bigg(\frac{\partial}{\partial s}\Phi_s(x)\bigg)\dd x\dd s
  \end{align}
  and
  \begin{align}\label{I3}
    I_3&=\int_0^t\int_{\mathbb{R}}\int_0^sp_{s-u}^\theta (x)\mu(u,X(u,0))\dd u\,\Delta_\theta \Phi_s(x)\dd x\dd s\notag\\
    &=\int_0^t\int_{\mathbb{R}}\int_u^t\frac{\partial}{\partial s}\Big(
    p_{s-u}^\theta (x) \Phi_s(x)\Big)\dd s\dd x\,\mu(u,X(u,0))\dd u\notag\\
    &\quad-\int_0^t\int_{\mathbb{R}}\int_u^t
    p_{s-u}^\theta (x)\bigg(\frac{\partial}{\partial s}\Phi_s(x)\bigg)\dd s\dd x\,\mu(u,X(u,0))\dd u\notag\\
    &=\langle X_3(t,\cdot),\Phi_t\rangle - \int_0^t \Phi_u(0)\mu(u,X(u,0))\dd u\notag\\
    &\quad-\int_0^t\int_{\mathbb{R}}X_3(s,x)\bigg(\frac{\partial}{\partial s}\Phi_s(x)\bigg)\dd x\dd s.
  \end{align}
  Plugging \eqref{I1}, \eqref{I2} and \eqref{I3} into \eqref{I} and rearranging the terms yields
  \begin{align*}
    \langle X(t,\cdot),\Phi_t\rangle
    &=\int_{\mathbb{R}}\bigg(x_0(0)\Phi_0(x)+\int_0^t \Phi_s(x) \frac{\partial}{\partial s}x_0(s)\dd s\bigg) \dd x\\
    &\quad+\int_0^t\int_{\mathbb{R}}X(s,x)\bigg(\Delta_\theta\Phi_s(x)+\frac{\partial}{\partial s}\Phi_s(x) \bigg)\dd x\dd s\\
    &\quad+\int_0^t\mu(s,X(s,0))\Phi_s(0)\dd s
    +\int_0^t\sigma(s,X(s,0))\Phi_s(0)\dd B_s,
  \end{align*}
  for $t\in [0,T]$, which shows that \eqref{dSPDE} holds.
\end{proof}

We summarize the findings of Step~1 in the following proposition.

\begin{proposition}\label{prop:step1}
  Every strong $L^p$-solution $(X_t)_{t\in[0,T]}$ to the SVE~\eqref{eq:SVE} with $p$ given by \eqref{def_p} generates a strong solution $(X_t)_{t\in[0,T],x\in \R}$, as defined in \eqref{def:randomfields}, to the distributional SPDE~\eqref{dSPDE} with $X\in C([0,T]\times\R)$ a.s. Furthermore, $\sup_{t\in[0,T],x\in\R}\E[|X(t,x)|^q]<\infty$ for all $q\in(0,\infty)$ and, for $Z(t,x):=X(t,x)-x_0(t)$ and $q\in[2,\infty)$,
  \begin{equation*}
    \mathbb{E}[| Z(t,x)-Z(t',x')|^q ]
    \lesssim |t'-t|^{(\frac{1}{2}-\alpha)q}+|x-x'|^{(\frac{1}{2}-\alpha)q},
  \end{equation*}
  for all $t,t'\in[0,T]$ and $x,x'\in[-1,1]$.
\end{proposition}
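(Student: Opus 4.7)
The plan is to assemble Proposition~\ref{prop:step1} from the three main ingredients developed throughout Section~\ref{sec: transformation to SPDE}: Lemma~\ref{lem:equuiv}, Proposition~\ref{prop: regularity result}, and Lemma~\ref{lem:dSPDE}. Concretely, starting from a strong $L^p$-solution $(X_t)_{t\in[0,T]}$ of the SVE~\eqref{eq:SVE} with $p$ chosen as in \eqref{def_p}, I define the random field $X^i$ by~\eqref{def:randomfields}. Lemma~\ref{lem:equuiv} then identifies $(X(t,x))_{t\in[0,T],x\in\R}$ as a strong solution of the SPDE~\eqref{SPDE} and shows $X(t,0)=X_t$, together with joint continuity of the map $(t,x)\mapsto X(t,x)$. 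The hypotheses of Lemma~\ref{lem:equuiv} are met because by Lemma~\ref{lem:regularity} the solution has finite moments of every order and $\beta$-H{\"o}lder continuous sample paths for $\beta\in(0,\tfrac12-\alpha)$.

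Next I would invoke Proposition~\ref{prop: regularity result}: part~(i) yields $\sup_{t,x}\E[|X(t,x)|^q]<\infty$ for every $q\in(0,\infty)$, and part~(ii) gives precisely the claimed two-parameter H{\"o}lder estimate
\[
\mathbb{E}[|Z(t,x)-Z(t',x')|^q]\lesssim |t'-t|^{(\frac12-\alpha)q}+|x-x'|^{(\frac12-\alpha)q},\qquad q\in[2,\infty),
\]
for $t,t'\in[0,T]$ and $x,x'\in[-1,1]$, where $Z(t,x):=X(t,x)-x_0(t)$. Choosing $q$ sufficiently large, this Kolmogorov-type bound yields a continuous modification of $Z$, hence of $X$, which is consistent with (and refines) the continuity already obtained from Lemma~\ref{lem:equuiv}.

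To pass to the distributional formulation~\eqref{dSPDE}, I would apply Lemma~\ref{lem:dSPDE} to the strong solution $X$ of the SPDE~\eqref{SPDE}. That lemma, proved via the stochastic Fubini theorem, integration by parts with respect to the operator $\Delta_\theta$ (Lemma~\ref{cor:1}), and the fact that $p^\theta$ is the fundamental solution of \eqref{eq: evol_eq 2}, yields \eqref{dSPDE} for every $\Phi\in C_0^2([0,T]\times\R)$. Collecting these three inputs immediately gives the proposition.

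The proof is therefore essentially a bookkeeping exercise, since the analytic substance has been carried out in the preceding lemmas; the main conceptual step that required real work earlier was the verification of \eqref{dSPDE}, which relies on the delicate handling of the singular weight $|x|^{-\theta}$ appearing in $\Delta_\theta$ and the need to justify interchange of deterministic and stochastic integrals against $p^\theta$. Here, however, one simply quotes Lemma~\ref{lem:dSPDE} and the regularity results. Consequently, the only remaining point worth highlighting in the write-up is that the H{\"o}lder bound of Proposition~\ref{prop: regularity result}~(ii) is what guarantees, via Kolmogorov's criterion, that the random field $X$ has a version with $X\in C([0,T]\times\R)$ almost surely, as asserted.
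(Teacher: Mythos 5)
Your proof is correct and follows exactly the paper's route: it assembles the proposition from Lemma~\ref{lem:equuiv} (solution to the SPDE and joint continuity), Lemma~\ref{lem:dSPDE} (passage to the distributional form), and Proposition~\ref{prop: regularity result} (moment bounds and the two-parameter H\"older estimate). The remark on Kolmogorov's criterion is a harmless addition that the paper itself uses elsewhere.
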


\begin{proof}
  The implication of the solution to \eqref{dSPDE} by the one to \eqref{eq:SVE} is given by Lemma~\ref{lem:equuiv} and Lemma~\ref{lem:dSPDE}, the continuity by Lemma~\ref{lem:equuiv} and the remaining properties by Proposition~\ref{prop: regularity result}.
\end{proof}

\section{Step~2 and~3: Implementing Yamada--Watanabe's approach}

The next steps are to use the classical approximation of the absolute value function introduced by Yamada--Watanabe~\cite{Yamada1971}, allowing us to apply It{\^o}'s formula. Recall that, by Assumption~\ref{ass:coefficients}~(ii), $\sigma$ is $\xi$-H{\"o}lder continuous for some $\xi \in [\frac{1}{2},1]$. Hence, there exists a strictly increasing function $\rho\colon[0,\infty)\to [0,\infty)$ such that $\rho(0)=0$,
\begin{equation*}
  |\sigma(t,x)-\sigma(t,y)|\leq C_{\sigma} |x-y|^{\xi} \leq \rho(|x-y|)
  \quad \text{for } t\in [0,T] \text{ and }x,y \in \R
\end{equation*}
and
\begin{equation*}
  \int_0^{\epsilon} \frac{1}{\rho(x)^2}\dd x=\infty\quad  \text{for all } \epsilon>0.
\end{equation*}
Based on $\rho$, we define a sequence $(\phi_n)_{n\in\N}$ of functions mapping from $\R$ to $\R$ that approximates the absolute value in the following way: Let $(a_n)_{n\in\N}$ be a strictly decreasing sequence with $a_0=1$ such that $a_n\to 0$ as $n\to \infty$ and
\begin{equation}
  \int_{a_n}^{a_{n-1}}\frac{1}{\rho(x)^2}\dd x=n.\label{def:a_n_YW}
\end{equation}
Furthermore, we define a sequence of mollifiers: let $(\psi_n)_{n\in\N}\in C_0^{\infty}(\R)$ be smooth functions with compact support such that $\textup{supp}(\psi_n)\subset (a_n,a_{n-1})$,
\begin{equation}\label{prop}
  0\leq \psi_n(x)\leq \frac{2}{n\rho(x)^2}\leq \frac{2}{nx}, \quad x\in\R,
  \quad\text{and}\quad
  \int_{a_n}^{a_{n-1}}\psi_n(x)\dd x=1.
\end{equation}
We set
\begin{equation}\label{def:phi_n}
  \phi_n(x):=\int_0^{|x|} \bigg(\int_0^y \psi_n(z)\dd z \bigg)\dd y,\quad x\in \R.
\end{equation}
By \eqref{prop} and the compact support of $\psi_n$, it follows that $\phi_n(\cdot)\to |\cdot|$ uniformly as $n\to \infty$. Since every $\psi_n$ and, thus, every $\phi_n$ is zero in a neighborhood around zero, the functions~$\phi_n$ are smooth with
\begin{equation*}
  \|\phi_n'\|_\infty\leq 1,
  \quad
  \phi_n'(x)=\textup{sgn}(x)\int_0^{|x|}\psi_n(y)\dd y
  \quad\text{and}\quad
  \phi_n''(x)=\psi_n(|x|),
  \quad\text{for } x\in\R.
\end{equation*}

Let $X^1$ and $X^2$ be two strong solutions to the SPDE~\eqref{dSPDE} for a given Brownian motion $(B_t)_{t\in[0,T]}$ such that $X^1,X^2\in C([0,T]\times \R)$ a.s. We define $\tilde{X}:=X^1-X^2$ and consider, for some $\Phi_x^m\in C^2_0(\mathbb{R})$ for fixed $x\in\mathbb{R}$ and $m\in\R_+$ (we will later define $m$ depending on~$n$ and $\Phi_x^m$ is independent of~$t$):
\begin{equation*}
  \langle \tilde{X}_t,\Phi_x^m \rangle = \int_{\mathbb{R}}\tilde{X}(t,y)\Phi_x^m(y)\dd y,
\end{equation*}
where $\langle \cdot,\cdot \rangle$ denotes the scalar product on $L^2(\R)$.

\begin{proposition}\label{prop:step2}
  For a fixed $x\in\R$ and $m\in\R_+$, let $\Phi_x^m\in C^2_0(\R)$ be such that $\Delta_\theta \Phi_x^m$ is well-defined. Then, for $t\in [0,T]$, one has
  \begin{align}\label{afterIto}
    \phi_n(\langle \tilde{X}_t,\Phi_{x}^m \rangle)
    &=  \int_0^t  \phi_n'(\langle \tilde{X}_s, \Phi_x^m \rangle)\langle \tilde{X}_s,\Delta_\theta \Phi_{x}^m \rangle \dd s\notag\\
    &\quad+\int_0^t \phi_n'(\langle \tilde{X}_s,\Phi_x^m \rangle)\Phi_x^m(0) ( \mu(s,X^1(s,0))-\mu(s,X^2(s,0)) )\dd s\notag\\
    &\quad +\int_0^t  \phi_n'(\langle \tilde{X}_s, \Phi_x^m \rangle)\Phi_x^m(0) ( \sigma(s,X^1(s,0))-\sigma(s,X^2(s,0)) )\dd B_s\notag\\
    &\quad+\frac{1}{2}\int_0^t \psi_n(| \langle \tilde{X}_s,\Phi_x^m \rangle|)\Phi_x^m(0)^2 ( \sigma(s,X^1(s,0))-\sigma(s,X^2(s,0)) )^2\dd s.
  \end{align}
\end{proposition}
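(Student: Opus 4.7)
The plan is to specialize the distributional SPDE~\eqref{dSPDE} to the time-independent test function $\Phi_x^m$ to obtain a continuous scalar semimartingale decomposition for $t\mapsto \langle \tilde X_t,\Phi_x^m\rangle$, and then apply It\^o's formula with $\phi_n$. Since $\phi_n\in C^2(\R)$ with $\phi_n(0)=0$, $\|\phi_n'\|_\infty\le 1$ and $\phi_n''(y)=\psi_n(|y|)$, the output of It\^o's formula will match precisely the four terms on the right-hand side of~\eqref{afterIto}.

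For the decomposition, I write \eqref{dSPDE} for both $X^1$ and $X^2$ against $\Phi_x^m$. The summand $\tfrac{\partial}{\partial s}\Phi_s$ vanishes because $\Phi_x^m$ does not depend on time, and subtracting the two equations cancels the initial-condition and $x_0$-contributions, yielding
\begin{align*}
\langle \tilde X_t,\Phi_x^m\rangle
&=\int_0^t\langle \tilde X_s,\Delta_\theta \Phi_x^m\rangle\,\d s
+\int_0^t\Phi_x^m(0)\bigl(\mu(s,X^1(s,0))-\mu(s,X^2(s,0))\bigr)\,\d s\\
&\quad+\int_0^t\Phi_x^m(0)\bigl(\sigma(s,X^1(s,0))-\sigma(s,X^2(s,0))\bigr)\,\d B_s.
\end{align*}
This is a continuous semimartingale starting at $0$, whose finite-variation part is the sum of the two Lebesgue integrals and whose local-martingale part is the It\^o integral, with quadratic variation $\int_0^t\Phi_x^m(0)^2(\sigma(s,X^1(s,0))-\sigma(s,X^2(s,0)))^2\,\d s$.

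Applying It\^o's formula then gives
\begin{equation*}
\phi_n(\langle \tilde X_t,\Phi_x^m\rangle)
=\int_0^t \phi_n'(\langle \tilde X_s,\Phi_x^m\rangle)\,\d\langle \tilde X_s,\Phi_x^m\rangle
+\tfrac12\int_0^t \phi_n''(\langle \tilde X_s,\Phi_x^m\rangle)\,\d[\langle \tilde X,\Phi_x^m\rangle]_s,
\end{equation*}
and substituting the semimartingale decomposition, along with the identity $\phi_n''(y)=\psi_n(|y|)$, reproduces~\eqref{afterIto} line by line. The only technical point that needs verification is integrability: I must check that $s\mapsto \langle \tilde X_s,\Delta_\theta \Phi_x^m\rangle$ is locally integrable in $s$, that the drift and stochastic integrals are well-defined, and that the It\^o correction term is finite. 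All of this follows from the standing hypothesis that $\Delta_\theta \Phi_x^m$ is well-defined with compact support, the linear growth of $\mu$ and $\sigma$ from Assumption~\ref{ass:coefficients}(i), the a.s.\ continuity of $X^i$, and the moment bound $\sup_{t,x}\E[|X^i(t,x)|^q]<\infty$ from Proposition~\ref{prop:step1}. I do not expect a serious obstacle at this stage: the step is essentially a bookkeeping application of It\^o's formula, and the genuine difficulty of the pathwise uniqueness argument will arise only in the later steps, where $\Phi_x^m\to\delta_x$ and $\phi_n\to|\cdot|$ must be passed to the limit in a coordinated fashion.
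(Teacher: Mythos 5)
Your proposal is correct and matches the paper's approach: the paper also derives the semimartingale decomposition of $\langle \tilde X_t,\Phi_x^m\rangle$ from~\eqref{dSPDE} (with the $\partial_s\Phi_s$ and initial-condition terms dropping out for time-independent test functions after subtraction) and then applies It\^o's formula with $\phi_n$, using $\phi_n''(y)=\psi_n(|y|)$. You have simply spelled out the bookkeeping that the paper's one-line proof leaves implicit.
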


\begin{proof}
  By \eqref{dSPDE}, $(\langle \tilde{X}_t,\Phi_x^m \rangle)_{t\in[0,T]}$ is a semimartingale. Therefore, we are able to apply It{\^o}'s formula to $\phi_n$, which yields the result.
\end{proof}

Note that \eqref{afterIto} defines a function in~$x$. We want to integrate this against another non-negative test function with the following properties.

\begin{assumption}\label{ass:Psi}
  Let $\Psi\in C^2([0,T]\times \mathbb{R})$ be twice continuously differentiable such that
  \begin{itemize}
    \item[(i)] $\Psi_t(0)>0$ for all $t\in [0,T]$,
    \item[(ii)] $\Gamma(t):=\lbrace x\in \mathbb{R}\,:\,\exists s\leq t \text{ s.t. } |\Psi_s(x)|>0 \rbrace \subset B(0,J(t))$ for some $0<J(t)<\infty$,
    \item[(iii)] 
      \begin{equation*}
        \sup\limits_{s\leq t}\bigg| \int_{\mathbb{R}}|x|^{-\theta}\bigg( \frac{\partial \Psi_s(x)}{\partial x} \bigg)^2\dd x  \bigg|<\infty, \quad  t\in [0,T].
      \end{equation*}
  \end{itemize}
\end{assumption}

We will later choose an explicit function $\Psi$ and show that it fulfills Assumption~\ref{ass:Psi}. Then, we get the following equality, where the extra term $I_5^{m,n}$ arises due to the $t$-dependence of~$\Psi$.

\begin{proposition}\label{prop:Psi_step3}
  For $\Psi$ fulfilling Assumption~\ref{ass:Psi}, we have
  \begin{align}\label{I1I2I5}
    &\langle \phi_n(\langle \tilde{X}_t,\Phi_{\cdot}^m \rangle),\Psi_t \rangle\notag\\
    &\quad= \int_0^t \langle \phi_n'(\langle \tilde{X}_s,\Phi_\cdot^m \rangle)\langle \tilde{X}_s,\Delta_\theta \Phi_{\cdot}^m \rangle,\Psi_s \rangle \dd s\notag\\
    &\qquad+\int_0^t \langle \phi_n'(\langle \tilde{X}_s,\Phi_\cdot^m \rangle)\Phi_\cdot^m(0),\Psi_s \rangle ( \mu(s,X^1(s,0))-\mu(s,X^2(s,0)) )\dd s\notag\\
    &\qquad+ \int_0^t \langle \phi_n'(\langle \tilde{X}_s,\Phi_\cdot^m \rangle)\Phi_\cdot^m(0),\Psi_s \rangle ( \sigma(s,X^1(s,0))-\sigma(s,X^2(s,0)) )\dd B_s\notag\\
    &\qquad+\frac{1}{2}\int_0^t \langle \psi_n(| \langle \tilde{X}_s,\Phi_\cdot^m \rangle|)\Phi_\cdot^m(0)^2,\Psi_s \rangle ( \sigma(s,X^1(s,0))-\sigma(s,X^2(s,0)))^2\dd s\notag\\
    &\qquad+\int_0^t \langle \phi_n(\langle \tilde{X}_s,\Phi_\cdot^m \rangle),\dot{\Psi}_s \rangle \dd s\notag\\
    &\quad=:I_1^{m,n}(t)+I_2^{m,n}(t)+I_3^{m,n}(t)+I_4^{m,n}(t)+I_5^{m,n}(t),
  \end{align}
  for $t\in [0,T]$, where $\dot{\Psi}_s(x):=\frac{\partial}{\partial s}\Psi_s(x)$.
\end{proposition}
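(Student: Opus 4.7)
The plan is to combine Proposition~\ref{prop:step2}, which supplies for each fixed $x\in\R$ a semimartingale decomposition of $t\mapsto \phi_n(\langle\tilde X_t,\Phi_x^m\rangle)$, with the It\^o product formula applied to the deterministic, $C^2$-in-time factor $\Psi_t(x)$, and then to interchange the $\dd x$-integration with the time and stochastic integrations via (stochastic) Fubini.

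Concretely, for fixed $x\in\R$ the product formula applied to $\phi_n(\langle\tilde X_t,\Phi_x^m\rangle)\Psi_t(x)$ adds, since $\Psi$ is deterministic, smooth and of finite variation in $t$ so that its cross-variation with any semimartingale vanishes, only the single Lebesgue term
\begin{equation*}
  \int_0^t \phi_n(\langle\tilde X_s,\Phi_x^m\rangle)\dot\Psi_s(x)\dd s
\end{equation*}
on top of the decomposition in Proposition~\ref{prop:step2}. The initial-value term disappears because both solutions share the initial condition $x_0$, so $\tilde X_0\equiv 0$ and $\phi_n(0)=0$. Integrating over $x$ and applying classical Fubini to the finite-variation integrals and the stochastic Fubini theorem to the It\^o integral then rearranges the resulting expression into the five summands $I_1^{m,n},\dots,I_5^{m,n}$ of~\eqref{I1I2I5}, with $I_5^{m,n}$ arising from the new $\dot\Psi_s$-term.

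The substantive step is the verification of the integrability hypotheses underlying these Fubini applications. For this one uses: the uniform compact support of $\Psi_s(\cdot)$ in $x$ provided by Assumption~\ref{ass:Psi}~(ii), which restricts the $x$-domain to a bounded set; the bound $\|\phi_n'\|_\infty\le 1$ and the construction bounds on $\psi_n$ in~\eqref{prop}; the linear growth of $\mu,\sigma$ from Assumption~\ref{ass:coefficients}~(i); and the uniform moment bound $\sup_{s\le T,\,y\in\R}\E[|X^i(s,y)|^q]<\infty$ for all $q\geq 1$ supplied by Proposition~\ref{prop:step1}. Together these yield the $L^1(\Omega\times[0,t]\times\R)$-control required for the pathwise Fubini in $I_1^{m,n}$, $I_2^{m,n}$, $I_4^{m,n}$ and $I_5^{m,n}$, and the $L^2(\Omega\times[0,t]\times\R)$-control on the integrand required by the stochastic Fubini theorem for $I_3^{m,n}$.

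I expect the main (but routine) obstacle to be tracking integrability in the term $I_1^{m,n}$, since the operator $\Delta_\theta$ involves the weight $|x|^{-\theta}$ which is only locally integrable because $\theta=\tfrac{1}{2\alpha}-2\in(0,\infty)$ and the resulting singularity of $\Delta_\theta\Phi_x^m$ at $x=0$ has to be absorbed by the smoothness of the test function. This is precisely why the standing hypothesis ``$\Delta_\theta\Phi_x^m$ is well-defined'' is imposed in the statement; combined with the compact $x$-support of $\Phi_x^m$ and the moment estimates for $\tilde X$, it guarantees that $\int_0^t\int_\R|\Psi_s(x)\langle\tilde X_s,\Delta_\theta\Phi_x^m\rangle|\dd x\dd s<\infty$ almost surely. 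Once these technical points are checked, identity~\eqref{I1I2I5} is obtained by straightforward bookkeeping from the decomposition of Proposition~\ref{prop:step2}.
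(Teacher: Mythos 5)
Your proof is correct, but it takes a genuinely different route from the paper. You invoke the It\^o product (integration-by-parts) formula directly: for fixed $x$ the pair $(\phi_n(\langle\tilde X_\cdot,\Phi_x^m\rangle),\Psi_\cdot(x))$ consists of a continuous semimartingale (by Proposition~\ref{prop:step2}) and a deterministic $C^1$ function of time, so the cross-variation vanishes and the product expands into the sum of the Proposition~\ref{prop:step2} terms weighted by $\Psi_s(x)$ plus the single Lebesgue term $\int_0^t\phi_n(\langle\tilde X_s,\Phi_x^m\rangle)\dot\Psi_s(x)\dd s$; the initial term drops because $\tilde X_0=0$ and $\phi_n(0)=0$; and then (stochastic) Fubini over $x$ rearranges into $I_1^{m,n},\dots,I_5^{m,n}$. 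The paper instead re-derives this product formula from scratch by time-discretizing $\Psi$ on a dyadic grid $t_i=i2^{-k}$, splitting the telescoping sum into a piece $A_t^k$ (which, after applying It\^o's formula on each slab and using the continuity of $\Psi$, converges to $I_1+\dots+I_4$) and a piece $C_t^k$ (which a continuity-plus-dominated-convergence argument shows converges to $I_5$). The two arguments buy different things: the paper's discretization is self-contained and sidesteps any discussion of stochastic Fubini or the finite-variation structure of $\Psi$ in $t$; your route is shorter and more transparent, but shifts the weight of the proof onto the Fubini interchanges, which you handle correctly by noting the compact $x$-support from Assumption~\ref{ass:Psi}~(ii), the bounds $\|\phi_n'\|_\infty\le1$, the control of $\psi_n$ in~\eqref{prop}, and the uniform moment bounds from Proposition~\ref{prop:step1} -- precisely the ingredients that give the $L^1$- and $L^2$-control the classical and stochastic Fubini theorems require. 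Both proofs are valid; the underlying mathematics is the same product-rule computation, just organized differently.
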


\begin{proof}
  We discretize $\Psi_t(x)$ in its time variable, then let the grid size go to zero and show that the resulting term converges to \eqref{I1I2I5}. Therefore, let $t_i=i2^{-k}$, $i=0,1,\dots,\lfloor t2^k\rfloor+1=:K_t^k$, where $\lfloor\cdot\rfloor$ denotes rounding down to the next integer, such that $t_{\lfloor t2^k\rfloor}\leq t< t_{K_t^k}$, and denote
  \begin{equation}\label{def_psii}
    \Psi_{t}^k(x):=2^k\int_{t_{i-1}}^{t_i}\Psi_s(x)\dd s,\quad t\in [t_{i-1},t_i), x\in\R.
  \end{equation}
  Then, we can build the telescope sum
  \begin{align}\label{eq331}
    \langle \phi_n(\langle \tilde{X}_t,\Phi_{\cdot}^m \rangle),\Psi_t \rangle
    &=\sum\limits_{i=1}^{K_t^k}\langle \phi_n(\langle \tilde{X}_{t_i},\Phi_{\cdot}^m \rangle),\Psi_{t_i}^k \rangle - \langle \phi_n(\langle \tilde{X}_{t_{i-1}},\Phi_{\cdot}^m \rangle),\Psi_{t_{i-1}}^k \rangle\notag\\
    &\quad-\langle \phi_n(\langle \tilde{X}_{t_{K_t^k}},\Phi_{\cdot}^m \rangle),\Psi_{t_{K_t^k}}^k \rangle + \langle \phi_n(\langle \tilde{X}_{t},\Phi_{\cdot}^m \rangle),\Psi_{t} \rangle.
  \end{align}
  By the continuity of $\tilde{X}$, $\Psi$ and $\phi_n$, the sum of the last two terms approaches zero as $t_{K_t^k}\to t$ and thus as $k\to\infty$.
  
  For the terms in the summation, we use the continuity of $\tilde{X}$ and the notation $f(t_{i}-):=\lim\limits_{s<t_{i},s\to t_{i}}f(s)$, to get the equality
  \begin{align*}
    &\langle \phi_n(\langle \tilde{X}_{t_{i}},\Phi_{\cdot}^m \rangle),\Psi_{t_{i}}^k \rangle
    =\langle \phi_n(\langle \tilde{X}_{t_{i}-},\Phi_{\cdot}^m \rangle),\Psi_{t_{i}-}^k \rangle + \langle \phi_n(\langle \tilde{X}_{t_{i}},\Phi_{\cdot}^m \rangle),\Psi_{t_{i}}^k-\Psi_{t_{i-1}}^k \rangle.
  \end{align*}
  By plugging this into \eqref{eq331}, we get
  \begin{align*}  
    \langle \phi_n(\langle \tilde{X}_t,\Phi_{\cdot}^m \rangle),\Psi_t \rangle &=\sum\limits_{i=1}^{K_t^k}\langle \phi_n(\langle \tilde{X}_{t_{i}-},\Phi_{\cdot}^m \rangle),\Psi_{t_{i}-}^k \rangle - \langle \phi_n(\langle \tilde{X}_{t_{i-1}},\Phi_{\cdot}^m \rangle),\Psi_{t_{i-1}}^k \rangle\notag\\
    &\quad +\sum\limits_{i=1}^{K_t^k}\langle \phi_n(\langle \tilde{X}_{t_{i}},\Phi_{\cdot}^m \rangle),\Psi_{t_{i}}^k-\Psi_{t_{i-1}}^k \rangle=:A_t^k+C_t^k.
  \end{align*}
  For $A_t^k$, we get, by applying It{\^o}'s formula, that
  \begin{align*}
    A_t^k&=\sum\limits_{i=1}^{K_t}\langle \phi_n(\langle \tilde{X}_{t_{i}},\Phi_{\cdot}^m \rangle),\Psi_{t_{i-1}}^k \rangle - \langle \phi_n(\langle \tilde{X}_{t_{i-1}},\Phi_{\cdot}^m \rangle),\Psi_{t_{i-1}}^k \rangle\\
    &\to I_1^{m,n}(t)+I_2^{m,n}(t)+I_3^{m,n}(t)+I_4^{m,n}(t)
    \quad\text{as }k\to\infty,
  \end{align*}
  by the continuity of $\Psi$.

  Thus, it remains to show that $C_t^k$ converges to $I_5^{m,n}(t)$. To that end, we use the construction~\eqref{def_psii} and Fubini's theorem to conclude that
  \begin{align*}
    C_t^k
    &=\sum\limits_{i=1}^{K_t^k}\bigg\langle \phi_n(\langle \tilde{X}_{t_{i}},\Phi_{\cdot}^m \rangle),2^k\int_{t_{i-1}}^{t_i} (\Psi_{s}-\Psi_{s-2^{-k}})\dd s \bigg\rangle\\
    &=\sum\limits_{i=1}^{K_t^k}\bigg\langle \phi_n(\langle \tilde{X}_{t_{i}},\Phi_{\cdot}^m \rangle),2^k\int_{t_{i-1}}^{t_i} \int_{s-2^{-k}}^s \dot{\Psi}_r\dd r \dd s \bigg\rangle\\
    &=2^k\sum\limits_{i=1}^{K_t^k}\int_{t_{i-1}}^{t_i} \int_{s-2^{-k}}^s\langle \phi_n(\langle \tilde{X}_{t_{i}},\Phi_{\cdot}^m \rangle), \dot{\Psi}_r \rangle \dd r \dd s\\
    &=2^k\sum\limits_{i=1}^{K_t^k}\int_{t_{i-1}}^{t_i} \int_{s-2^{-k}}^s\langle \phi_n(\langle \tilde{X}_{t_{i}},\Phi_{\cdot}^m \rangle), \dot{\Psi}_r \rangle - \langle \phi_n(\langle \tilde{X}_{r},\Phi_{\cdot}^m \rangle), \dot{\Psi}_r \rangle \dd r \dd s\\
    &\quad + 2^k\sum\limits_{i=1}^{K_t^k}\int_{t_{i-1}}^{t_i} \int_{s-2^{-k}}^s \langle \phi_n(\langle \tilde{X}_{r},\Phi_{\cdot}^m \rangle), \dot{\Psi}_r \rangle \dd r \dd s.
  \end{align*}
  The first summand can be bounded by 
  \begin{align*}
    \int_0^t \sup\limits_{u\leq t,|u-r|\leq 2^{-k}}\big| \langle \phi_n(\langle \tilde{X}_{u},\Phi_{\cdot}^m \rangle), \dot{\Psi}_r \rangle - \langle \phi_n(\langle \tilde{X}_{r},\Phi_{\cdot}^m \rangle), \dot{\Psi}_r \rangle \big|\dd r,
  \end{align*}
  which converges to zero a.s. as $k\to\infty$ by the continuity and boundedness of $\tilde{X}$. Furthermore, we get, by
  \begin{equation*}
    2^k\int_{s-2^{-k}}^s \langle \phi_n(\langle \tilde{X}_{r},\Phi_{\cdot}^m \rangle), \dot{\Psi}_r \rangle \dd r\to \langle \phi_n(\langle \tilde{X}_{s},\Phi_{\cdot}^m \rangle), \dot{\Psi}_s \rangle
    \quad \text{as } k\to\infty
  \end{equation*}
  and the dominated convergence theorem, that
  \begin{equation*}
    C_t^k \to \int_{0}^{t} \langle \phi_n(\langle \tilde{X}_{s},\Phi_{\cdot}^m \rangle), \dot{\Psi}_s \rangle \dd s
    \quad \text{as } k\to\infty,
  \end{equation*}
  which proves the proposition.
\end{proof}

We will bound the expectation of the terms $I_1^{m,n}$ to $I_5^{m,n}$ as $m,n\to\infty$ in Section~\ref{sec:step4}.

\section{Step~4: Passing to the limit}\label{sec:step4}

Before we can pass to the limit in \eqref{I1I2I5}, we need to choose a sequence $(\Phi_x^{m,n})_{n\in\N}$ of smooth functions $\Phi_x^{m,n}\in C_0^\infty(\R)$ for some $x\in\R$ and for $m\in\R_+$, which approximates the Dirac distribution~$\delta_x$ explicitly. We will choose some $m=m^{(n)}$ dependent on the index~$n$ of the Yamada--Watanabe approximation and, for notational simplicity, will skip the $m$-dependence and shortly write~$(\Phi_x^n)_{n\in\N}$.

\subsection{Explicit choice of the test function}

We want to approximate with $\Phi_x^{n}$ a Dirac distribution centered around $x\in\R$. Therefore, we choose it to coincide with the sum of two Gaussian kernels with mean $x$ and $y$, respectively, and standard deviation $m^{{-1}}$, when $x$ and $y$ are close. The reason for this construction is that we want to keep the mass of $\Phi$ in $B(0,\frac{1}{m^{(n)}})$ constant as $n\to\infty$. For this purpose, we define
\begin{equation*}
  \tilde{\Phi}_x^{m}(y):=\frac{1}{\sqrt{2\pi m^{-2}}}e^{-\frac{(y-x)^2}{2m^{-2}}}
\end{equation*}
and, to construct the compact support, let $\tilde{\psi}^{m,n}_x$ be smooth functions for $n\in \N$ and fixed $x\in\mathbb{R}$ with
\begin{align*}
  \tilde{\psi}^{m,n}_x(y):= \left\{
  \begin{array}{ll}
    1, & \text{if }y\in B(x,\frac{1}{m}) \\
    0, &  \textrm{if }y\in \mathbb{R}\setminus B(x,\frac{1}{m}+b_n) \\
  \end{array}
  \right.
\end{align*}
and $0\leq \tilde{\psi}^{m,n}_x(y)\leq 1$ for $y$ elsewhere such that $\tilde{\psi}^{m,n}_x$ is smooth. Here, let $(b_n)_{n\in\mathbb{N}}$ be a sequence such that $b_n>0$ and
\begin{equation*}
  \mu_n\bigg(B\bigg(x,\frac{1}{m}+b_n\bigg)\setminus B\bigg(x,\frac{1}{m}\bigg)\bigg)=\frac{a_n}{2},
\end{equation*}
where $\mu_n(A):=\int_A \tilde{\Phi}_x^{m}(y)\dd y$ denotes the measure in terms of the above normal distribution and $a_n:=e^{-\frac{n(n+1)}{2}}$ comes from the Yamada--Watanabe sequence. It is always possible to find such a $b_n>0$ since the mass of $\tilde{\Phi}_x^{m}$ in $B(x,\frac{1}{m})$ is $\approx 0.6827$, which is independent of~$n$, and $\frac{a_n}{2}<0.3$ for all~$n\in \N$.

Then, we define
\begin{equation}\label{phi}
  \Phi_x^{n}(y):=c\Big(\tilde{\psi}_x^{m,n}(y)\tilde{\Phi}_x^{m}(y)+\tilde{\Phi}_y^{m}(x)\tilde{\psi}_y^{m,n}(x)\Big),
\end{equation}
with $c:=1/(2m_\sigma)$, where $m_\sigma\approx 0.6827$ denotes the mass of a normal distribution $\mathcal{N}(\mu,\sigma^2)$ inside the interval $[\mu-\sigma,\mu+\sigma]$. With that choice of $c$, $\Phi_x^n$ approximates the Dirac distribution~$\delta_x$ around~$x$ as $n\to\infty$. Note that $\Phi_x^{n}(y)$ is identical in terms of $x$ and $y$. Furthermore, $\Phi_x^{n}$ owes the following properties that we will need later. To that end, let us introduce the following stopping time for $K>0$:
\begin{equation}\label{def_stopptimeK}
  T_K:=\inf\limits_{t\in [0,T]} \bigg\{ \sup\limits_{x\in [-\frac{1}{2},\frac{1}{2}]}( |X^1(t,x)|+|X^2(t,x)|)>K \bigg\},
\end{equation}
where we use the convention $\inf\emptyset:=\infty$. Note that, by the continuity of $X^1$ and $X^2$, $T_K\to \infty$ a.s. as $K\to\infty$.

\begin{proposition}\label{prop:my_prop6}
  For fixed $x\in\R$, $\Phi_x^{n}$, as defined in \eqref{phi}, fulfills:
  \begin{itemize}
    \item[(i)] $\Delta_{\theta,x}\Phi_x^{n}(y)=\Delta_{\theta,y}\Phi_x^{n}(y)$ for all $x,y\in\R$, where $\Delta_{\theta,x}$ denotes $\Delta_\theta$ acting on~$x$;
    \item[(ii)] $\int_{\mathbb{R}}\Phi_x^{n}(0)^2\dd x \lesssim m^{(n)}$ for all $n\in\N$;
    \item[(iii)] $\int_{\mathbb{R}}\Phi_x^{n}(0)\dd x\leq 2$ for all $n\in\N$;
    \item[(iv)] for all $(s,x)\in[0,T]\times\R$,
      \begin{equation*}
        \langle\tilde{X}_s,\Phi_x^{n} \rangle \to \tilde{X}(s,x) \quad\text{and}\quad
        \phi_n'(\langle \tilde{X}_s,\Phi_x^{n} \rangle )\langle \tilde{X}_s,\Phi_x^{n} \rangle \to |\tilde{X}(s,x)|,\quad \text{as } n\to\infty;
      \end{equation*}
    \item[(v)] given $s\in [0,T_K]$, there exists a constant $C_K>0$ that is independent from $n$, such that, if
      \begin{equation*}
        \bigg| \int_{\mathbb{R}} \tilde{X}(s,y)\Phi_x^{n}(y)\dd y\bigg | \leq a_{n-1}
      \end{equation*}
      holds, then there is some $\hat{x}\in B(x,\frac{1}{m})$ such that $|\tilde{X}(s,\hat{x})|\leq C_Ka_{n-1}$.
  \end{itemize}
\end{proposition}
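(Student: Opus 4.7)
The plan is to establish the five items one by one, exploiting (a) the symmetry of $\Phi_x^n$ in the pair $(x,y)$, (b) the fact that a Gaussian of variance $m^{-2}$ at $0$ is itself a Gaussian in its centre, and (c) the explicit mass budget inside and outside $B(x,1/m)$ forced by the definition of $b_n$.

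For (i), observe that $\tilde\Phi_x^m(y)=\tilde\Phi_y^m(x)$ (symmetry of the Gaussian) and that $\tilde\psi_x^{m,n}(y)=\tilde\psi_y^{m,n}(x)$, since both expressions encode the same conditions $|x-y|\le 1/m$ and $|x-y|\ge 1/m+b_n$. Writing $F(x,y):=\Phi_x^n(y)$, the definition gives $F(x,y)=F(y,x)$, and two differentiations applied to this identity transfer each $x$-derivative into the corresponding $y$-derivative. Since $\Delta_\theta$ is built only out of such derivatives and $|x|^{-\theta}$ acting on the derivation variable, $\Delta_{\theta,x}F=\Delta_{\theta,y}F$. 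For (ii) and (iii), drop the cutoffs to bound $\Phi_x^n(0)\le 2c\,\tilde\Phi_x^m(0)=m_\sigma^{-1}\cdot\frac{m}{\sqrt{2\pi}}e^{-x^2m^2/2}$, which, as a function of $x$, is a Gaussian density (centred at $0$, variance $m^{-2}$) times a constant. Direct integration then gives $\int\Phi_x^n(0)\dd x\le m_\sigma^{-1}\le 2$; squaring first produces a Gaussian of variance $(2m^2)^{-1}$ with prefactor proportional to $m^2$, hence $\int\Phi_x^n(0)^2\dd x\lesssim m$.

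For (iv), the key computation is the mass budget. On $B(x,1/m)$ both cutoffs are $1$, so $\Phi_x^n=2c\,\tilde\Phi_x^m$ there, placing mass exactly $2cm_\sigma=1$ inside $B(x,1/m)$; by the choice of $b_n$, the mass in the annulus $B(x,1/m+b_n)\setminus B(x,1/m)$ is at most $c\cdot a_n=a_n/(2m_\sigma)$, and outside $B(x,1/m+b_n)$ the function vanishes. Hence $\Phi_x^n$ is a compactly supported approximation of $\delta_x$ with total mass tending to $1$, and continuity of $\tilde X(s,\cdot)$ yields $\langle\tilde X_s,\Phi_x^n\rangle\to\tilde X(s,x)$. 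The second convergence follows because $\phi_n'(u_n)\to\operatorname{sgn}(\tilde X(s,x))$ whenever $u_n\to\tilde X(s,x)\ne 0$ (and $\|\phi_n'\|_\infty\le 1$ handles the case $\tilde X(s,x)=0$), and $\operatorname{sgn}(a)\cdot a=|a|$.

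The hard part is (v), and I would prove it by contradiction. Assume $|\tilde X(s,y)|>C_K a_{n-1}$ for every $y\in B(x,1/m)$. Since $\tilde X(s,\cdot)$ is continuous and $B(x,1/m)$ is connected, $\tilde X(s,\cdot)$ keeps a constant sign on this ball; assume WLOG it is positive. Combining the mass computation of (iv) with $s\le T_{K}$ (which, after $m$ is large enough that $B(x,1/m+b_n)\subset[-1/2,1/2]$, gives $|\tilde X(s,y)|\le 2K$ on the support of $\Phi_x^n$) yields
\begin{align*}
\langle\tilde X_s,\Phi_x^n\rangle
&\ge C_K a_{n-1}\cdot 1 \;-\; 2K\cdot\frac{a_n}{2m_\sigma}
= C_K a_{n-1} - \frac{K a_n}{m_\sigma}.
\end{align*}
Since $(a_n)$ is strictly decreasing we have $a_n\le a_{n-1}$, so choosing $C_K:=2+K/m_\sigma$ makes the right-hand side strictly exceed $a_{n-1}$, contradicting the hypothesis $|\langle\tilde X_s,\Phi_x^n\rangle|\le a_{n-1}$. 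The delicate points are ensuring that the constant $C_K$ can be chosen independently of $n$ (which is why one uses the exact annular-mass identity $a_n/2$ from the definition of $b_n$ rather than an asymptotic estimate) and handling the range of $x$ for which the stopping time $T_K$ actually bounds $\tilde X$ on the full support of $\Phi_x^n$.
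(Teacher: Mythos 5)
Your proposal is correct and follows essentially the same strategy as the paper's proof for all five items. The only noteworthy difference is in (v): the paper argues directly (bound the outer integral, conclude $|\int_B\tilde X\Phi_x^n|\le(K+1)a_{n-1}$, then use the mass inside $B(x,1/m)$ to extract $\inf_B|\tilde X|\lesssim a_{n-1}$), while you argue by contradiction, assuming $|\tilde X|>C_Ka_{n-1}$ on the whole ball and showing this forces $|\langle\tilde X_s,\Phi_x^n\rangle|>a_{n-1}$. These are two phrasings of the same estimate, but your version has the small advantage of stating explicitly why $\tilde X$ must keep a constant sign on $B(x,1/m)$ (continuity plus the intermediate value theorem), a point that is needed but only alluded to in the paper's ``using the continuity of $\tilde X$''. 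You also use the sharper annulus-mass bound $ca_n$ rather than the paper's looser $a_{n-1}/2$; both suffice.
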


\begin{proof}
  (i) This statement is clear since $\Phi_x^{n}$ is identical in $x$ and $y$.

  (ii) We denote $c:=\frac{1}{\sqrt{2\pi}}$ to get
  \begin{equation*}
    \int_{\mathbb{R}}\Phi_x^{n}(0)^2\dd x
    \leq \int_{\mathbb{R}}\bigg( cm e^{-\frac{|x|^{2}}{2m^{-2}}}\bigg)^2\dd x
    \leq c m \int_{\mathbb{R}} c m e^{-\frac{|x|^{2}}{2m^{-2}}} \dd x= c m.
  \end{equation*}

  (iii) $\int_{\mathbb{R}}\Phi_x^{n}(0)\dd x \leq 2\int_{\mathbb{R}}\tilde{\Phi}_x^{m}(0)\dd x=2$.

  (iv) From the construction of $\Phi_x^{n}$ we get that
  \begin{equation*}
    \int_{\mathbb{R}}\tilde{X}(s,y)\Phi_x^{n}(y)\dd y \to \int_{\mathbb{R}}\tilde{X}(s,y)\delta_x(y)\dd y =\tilde{X}(s,x)\quad \text{as } n\to\infty.
  \end{equation*}
  Furthermore, we know that $\phi_n'(x)x \to |x|$ as $n\to\infty$ uniformly in $x\in\R$ and thus the second statement follows.

  (v) Let us write
  \begin{equation}\label{star_e}
    \int_{\mathbb{R}} \tilde{X}(s,y)\Phi_x^{n}(y)\dd y
    =\int_{B(x,\frac{1}{m})} \tilde{X}(s,y)\Phi_x^{n}(y)\dd y +\int_{\mathbb{R}\setminus B(x,\frac{1}{m})} \tilde{X}(s,y)\Phi_x^{n}(y)\dd y.
  \end{equation}
  By the construction of $\tilde{\psi}_x^{m,n}$ we know that $\Phi_x^{n}$ vanishes outside the ball $B(x,\frac{1}{m}+b_n)$, and, by the choice of~$b_n$, we know that the mass of $\Phi_x^{n}$ in $B(x,\frac{1}{m}+b_n)\setminus B(x,\frac{1}{m})$ is $a_{n-1}/2$. Since we have that $s\leq T_K$, we can bound
  \begin{equation*}
    \bigg| \int_{\mathbb{R}\setminus B(x,\frac{1}{m})} \tilde{X}(s,y)\Phi_x^{n}(y)\dd y\bigg|
    \leq 2K \int_{\mathbb{R}\setminus B(x,\frac{1}{m})} \Phi_x^{n}(y)\dd y
    \leq Ka_{n-1}.
  \end{equation*}
  Thus, by assumption and \eqref{star_e}, we have that
  \begin{equation*}
    \bigg| \int_{B(x,\frac{1}{m})} \tilde{X}(s,y)\Phi_x^{n}(y)\dd y \bigg| \leq (K+1)a_{n-1},
  \end{equation*}
  and, since $\Phi_x^{n}$ is the sum of two Gaussian densities with standard deviation $\frac{1}{m}$, we know that its mass inside the ball is $\approx 2\cdot 0.6827$ and can conclude, using the continuity of~$\tilde{X}$, that
  \begin{equation*}
    (K+1)a_{n-1}
    \geq \int_{B(x,\frac{1}{m})} \Phi_x^{n}(y)\dd y \inf\limits_{y\in B(x,\frac{1}{m})}|\tilde{X}(s,y)|
    \geq 1.3 \inf\limits_{y\in B(x,\frac{1}{m})}|\tilde{X}(s,y)|,
  \end{equation*}
  and thus, the statement holds with $C_K=(K+1)/1.3$.
\end{proof}

\subsection{Bounding the Yamada--Watanabe terms}

We start with the summands $I_1^{m,n}$, $I_2^{m,n}$, $I_3^{m,n}$ and $I_5^{m,n}$ in \eqref{I1I2I5} and will analyze $I_4^{m,n}$ later. To that end, we need the following elementary estimate.

\begin{lemma}\label{lem:calculus}
  If $f\in C_0^2(\mathbb{R})$ is non-negative and not identically zero, then
  \begin{equation*}
    \sup\limits_{x\in\mathbb{R}\,:\,f(x)>0}\lbrace ( f'(x))^2f(x)^{-1} \rbrace\leq 2\|f''(x)\|_\infty.
  \end{equation*}
\end{lemma}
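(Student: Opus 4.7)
The plan is to use Taylor's theorem together with the non-negativity of $f$. Fix any $x\in\mathbb{R}$ with $f(x)>0$. By Taylor's theorem with Lagrange remainder, for every $h\in\mathbb{R}$ there exists $\xi$ between $x$ and $x+h$ such that
\begin{equation*}
  f(x+h)=f(x)+f'(x)h+\tfrac{1}{2}f''(\xi)h^2.
\end{equation*}
Since $f\ge 0$ everywhere and $f''(\xi)\le\|f''\|_\infty$, this gives
\begin{equation*}
  0\le f(x+h)\le f(x)+f'(x)h+\tfrac{1}{2}\|f''\|_\infty h^2 \qquad\text{for all } h\in\mathbb{R}.
\end{equation*}

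The right-hand side is a quadratic polynomial in $h$. Note that $\|f''\|_\infty>0$: otherwise $f''\equiv 0$, so $f$ is affine, and an affine function with compact support is identically zero, contradicting the hypothesis. Hence the quadratic has positive leading coefficient, and the fact that it is non-negative for every real $h$ forces its discriminant to be non-positive:
\begin{equation*}
  (f'(x))^2-4\cdot\tfrac{1}{2}\|f''\|_\infty\cdot f(x)\le 0,
\end{equation*}
that is, $(f'(x))^2\le 2\|f''\|_\infty f(x)$. Dividing by $f(x)>0$ and taking the supremum over all $x$ with $f(x)>0$ yields the claim.

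There is no real obstacle here; the only small point to watch is handling the degenerate case $\|f''\|_\infty=0$, which is dispatched by the compact-support assumption as above.
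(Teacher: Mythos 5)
Your proof is correct and takes a genuinely different route from the paper's. The paper uses the extended (Cauchy) mean value theorem: it picks the nearest point $x_1<x$ where $f'$ vanishes (which exists by compact support), applies Cauchy's MVT to $f$ and $(f')^2$ on $(x_1,x)$ to get $f'(x)^2=2(f(x)-f(x_1))f''(x_2)$ for some $x_2$, and then uses $f(x_1)\ge 0$ and monotonicity of $f$ on $(x_1,x)$ to conclude. Your Taylor-plus-discriminant argument avoids the need to locate $x_1$ and the monotonicity observation entirely: you replace these with the global information that $f\ge 0$ and $f''\le\|f''\|_\infty$ everywhere, and the non-negativity of a quadratic in $h$ with positive leading coefficient immediately forces the discriminant inequality $(f'(x))^2\le 2\|f''\|_\infty f(x)$. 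This is the classical argument behind Landau--Kolmogorov-type inequalities; it is somewhat more streamlined and also shows that the statement only needs $f\ge 0$, $f''$ bounded, and $f''$ not identically zero, so compact support is used only to rule out the degenerate affine case. Your handling of that degenerate case is correct. Both proofs give the same sharp constant $2$.
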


\begin{proof}
  Choose some $x\in\mathbb{R}$ with $f(x)>0$ and assume w.l.o.g. that $f'(x)>0$. Let
  \begin{align*}
    x_1:=\sup\lbrace x'<x\colon f'(x')=0 \rbrace,
  \end{align*}
  which exists due to the compact support of $f$. By the extended mean value theorem (see \cite[Theorem~4.6]{Apostol1967}), applied to $f$ and $(f')^2$, there exists an $x_2\in (x_1,x)$ such that
  \begin{equation*}
    ( f'(x)^2-f'(x_1)^2)f'(x_2)=( f(x)-f(x_1))\frac{\partial (f')^2}{\partial x}(x_2).
  \end{equation*}
  By the choice of $x_1$, we know that $f'(x_2)>0$, and thus with $f'(x_1)=0$,
  \begin{equation*}
    f'(x)^2=( f(x)-f(x_1))2f''(x_2).
  \end{equation*}
  Since $f$ is strictly increasing on $(x_1,x)$ and non-negative, we conclude
  \begin{equation*}
    \frac{f'(x)^2}{f(x)}\leq \frac{f'(x)^2}{f(x)-f(x_1)}= 2f''(x_2)\leq 2\|f''\|_\infty. 
  \end{equation*}
\end{proof}

We want to take expectations on both sides of \eqref{I1I2I5} and then send $m,n\to\infty$.

\begin{lemma}\label{lem:i1i2i3i5i}
  For any stopping time $\mathcal{T}$ and fixed $t\in [0,T]$ we have:
  \begin{itemize}
    \item[(i)] $\lim\limits_{m,n\to\infty} \mathbb{E}[ I_1^{m,n}(t\wedge \mathcal{T}) ]\leq \mathbb{E}\big[ \int_0^{t\wedge \mathcal{T}}\int_{\mathbb{R}}|\tilde{X}(s,x)|\Delta_\theta \Psi_s(x)\dd x\dd s \big]$;
    \item[(ii)]
    $ \lim_{m,n\to\infty} \mathbb{E}[ I_2^{m,n}(t\wedge \mathcal{T}) ]\lesssim \int_0^{t\wedge \mathcal{T}}\Psi_s(0)\,\mathbb{E}[|\tilde{X}(s,0)|]\dd s$;
    \item[(iii)]
    $\mathbb{E}[ I_3^{m,n}(t\wedge \mathcal{T})]=0$
    for all $m,n\in\N$;
    \item[(iv)] $\lim\limits_{m,n\to\infty} \mathbb{E}[ I_5^{m,n}(t\wedge \mathcal{T})]= \mathbb{E}\big[ \int_0^{t\wedge \mathcal{T}}\int_{\mathbb{R}}|\tilde{X}(s,x)|\dot{\Psi}_s(x)\dd x\dd s \big]$.
  \end{itemize}
\end{lemma}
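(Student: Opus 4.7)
The plan is to handle the four items separately, using the list of properties of $\Phi_x^m$ from Proposition~\ref{prop:my_prop6}, the pointwise properties of the Yamada--Watanabe mollifiers $\phi_n$, the smoothness/integrability hypotheses of Assumption~\ref{ass:Psi} on $\Psi$, and the moment bounds from Proposition~\ref{prop:step1}. The common idea is: for each fixed $s$ and $x$, pass to the pointwise limit $\phi_n(\langle\tilde{X}_s,\Phi_\cdot^m\rangle)\to|\tilde{X}(s,\cdot)|$ and $\phi_n'(\langle\tilde{X}_s,\Phi_\cdot^m\rangle)\to\sgn(\tilde{X}(s,\cdot))$ (using Proposition~\ref{prop:my_prop6}(iv)), and then apply dominated convergence on $[0,t\wedge\mathcal{T}]\times\R$, with the dominating function coming from the compact support and boundedness of $\Psi$ and $\dot{\Psi}$ together with the moment bounds $\sup_{s\leq T,x\in\R}\E[|X^i(s,x)|^q]<\infty$.

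The main work lies in (i). Using Proposition~\ref{prop:my_prop6}(i), one moves $\Delta_\theta$ from the test function onto the $L^2$-pairing:
\begin{equation*}
  \langle\tilde{X}_s,\Delta_\theta\Phi_\cdot^m\rangle(x)=\Delta_{\theta,x}\langle\tilde{X}_s,\Phi_\cdot^m\rangle,
\end{equation*}
so that, writing $f(s,x):=\langle\tilde{X}_s,\Phi_x^m\rangle$, the integrand in $I_1^{m,n}$ is
$\phi_n'(f(s,x))\Delta_\theta f(s,x)\,\Psi_s(x)$. Two successive integrations by parts in $x$ against the singular weight $|x|^{-\theta}$ yield
\begin{equation*}
  \int_\R\phi_n'(f)\Delta_\theta f\,\Psi_s\dd x
  =-\tfrac{2}{(2+\theta)^2}\int_\R\phi_n''(f)|\partial_xf|^2|x|^{-\theta}\Psi_s\dd x
  +\int_\R\phi_n(f)\Delta_\theta\Psi_s\dd x,
\end{equation*}
where Assumption~\ref{ass:Psi}(iii) together with the Gaussian decay of $\Phi_x^m$ justifies the manipulation and vanishing of the boundary terms. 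Since $\phi_n''\geq 0$, the first summand is non-positive and may be dropped, giving
\begin{equation*}
  I_1^{m,n}(t\wedge\mathcal{T})\leq\int_0^{t\wedge\mathcal{T}}\int_\R\phi_n(f(s,x))\Delta_\theta\Psi_s(x)\dd x\dd s.
\end{equation*}
Now $\phi_n(f(s,x))\to|\tilde{X}(s,x)|$ pointwise, and $|\phi_n(f)|\leq|f|+1\leq\sup_{x\in\mathrm{supp}\,\Psi}(|X^1(s,x)|+|X^2(s,x)|)+1$, whose $L^1$-norm is finite on $[0,T]\times\mathrm{supp}\,\Psi$ by Proposition~\ref{prop:step1}; dominated convergence gives (i).

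For (iii), observe $|\phi_n'|\leq 1$, $\Psi$ has compact support in $x$, and $\int_\R\Phi_x^m(0)\dd x\leq 2$ by Proposition~\ref{prop:my_prop6}(iii), so combining these with the linear growth of $\sigma$ and $\sup_{s\in[0,T]}\E[|X^i(s,0)|^2]<\infty$ shows that the integrand of the It\^o integral $I_3^{m,n}$ has finite $L^2$-norm on $[0,T\wedge\mathcal{T}]$ uniformly in $m,n$; hence it is a true martingale with zero expectation. For (ii), the Lipschitz continuity of $\mu$ dominates $|\mu(s,X^1(s,0))-\mu(s,X^2(s,0))|\leq C_\mu|\tilde{X}(s,0)|$, so combined with $|\phi_n'|\leq 1$ and $\int_\R\Phi_x^m(0)\Psi_s(x)\dd x\to\Psi_s(0)$ (since $\Phi_x^m(0)\to\delta_0$ as $m\to\infty$ and $\Psi_s$ is continuous at $0$), we conclude $\lim_{m,n}\E[I_2^{m,n}(t\wedge\mathcal{T})]\lesssim\int_0^{t\wedge\mathcal{T}}\Psi_s(0)\E[|\tilde{X}(s,0)|]\dd s$, again with the moment bound providing the dominating function. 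Item (iv) is the most direct: pointwise $\phi_n(\langle\tilde{X}_s,\Phi_x^m\rangle)\to|\tilde{X}(s,x)|$, the integrand is dominated by $(\sup_x|\tilde{X}(s,x)|+1)|\dot{\Psi}_s(x)|$ (uniform bound on $\phi_n$), which is integrable on $[0,T]\times\mathrm{supp}\,\Psi$, and dominated convergence yields the stated equality.

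The main obstacle is part (i): carrying out the two integrations by parts against the singular weight $|x|^{-\theta}$ without producing uncontrolled boundary contributions at $x=0$, which is the reason Assumption~\ref{ass:Psi}(iii) is imposed and the reason the $\Phi_x^m$ are chosen to be sums of Gaussians decaying at infinity. The subsidiary point to check in (ii) is that the order of the limits $m\to\infty$ then $n\to\infty$ (or a joint limit) is legitimate, which follows from the same uniform domination.
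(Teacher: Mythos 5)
Your treatment of parts (ii), (iii), and (iv) matches the paper's proof almost verbatim (Lipschitz bound on $\mu$ plus $|\phi_n'|\le 1$ plus $\Phi_\cdot^m(0)\to\delta_0$ for (ii); uniform $L^2$-bound on the integrand to promote the local martingale to a true martingale for (iii); dominated convergence against $\dot\Psi_s$ for (iv)). The real content is (i), where you take a genuinely different route from the paper, and here there is a gap worth flagging.

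You write $\phi_n'(f)\Delta_\theta f=\Delta_\theta(\phi_n(f))-2\alpha^2|x|^{-\theta}\psi_n(|f|)(\partial_x f)^2$, pass $\Delta_\theta$ across the pairing by self-adjointness to get $\int\Delta_\theta(\phi_n(f))\Psi_s=\int\phi_n(f)\Delta_\theta\Psi_s$, and drop the non-positive remainder. This \emph{absorbs} the paper's cross term $I_{1,2}^{m,n}=2\alpha^2\int\psi_n(|f|)\,f\,\partial_x f\,|x|^{-\theta}\partial_x\Psi_s$ into the leading term, since $I_{1,2}^{m,n}=\int\big(\phi_n(f)-\phi_n'(f)f\big)\Delta_\theta\Psi_s$. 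The paper instead does two \emph{single} integrations by parts (from $I_1^{m,n}$ and from $\int\phi_n'(f)f\Delta_\theta\Psi_s$), identifies the common boundary term, and is left with $I_{1,1}+I_{1,2}+I_{1,3}$. It then spends most of the proof showing $\E[I_{1,1}^{m,n}+I_{1,2}^{m,n}]\le 4\alpha^2 C(\Psi,t)\,a_n/n\to 0$ via the case split on $A^{\pm,s},A^{0,s}$ and Assumption~\ref{ass:Psi}(iii), which is invoked precisely to control the singular integral $\int_{B(0,\epsilon)}|x|^{-\theta}(\partial_x\Psi_s)^2/\Psi_s\,\d x$ near the weight singularity.

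The gap: your second integration by parts, moving $\partial_x$ from $\phi_n(f)$ onto $|x|^{-\theta}\partial_x\Psi_s$, requires $\partial_x(|x|^{-\theta}\partial_x\Psi_s)=\Delta_\theta\Psi_s/(2\alpha^2)$ to be a genuine locally integrable function; the term $-\theta\,\sgn(x)|x|^{-\theta-1}\partial_x\Psi_s(x)$ fails to be in $L^1_{\mathrm{loc}}$ near $x=0$ unless $\partial_x\Psi_s$ itself vanishes to order $\gtrsim\theta$ there, which Assumption~\ref{ass:Psi} does \emph{not} require (it only gives $\int|x|^{-\theta}(\partial_x\Psi_s)^2\,\d x<\infty$, compatible with $\partial_x\Psi_s(0)\neq 0$ when $\theta<1$). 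You claim ``Assumption~\ref{ass:Psi}(iii) justifies the manipulation and vanishing of the boundary terms,'' but do not actually show how; the paper's separate treatment of $I_{1,2}^{m,n}$ is exactly what puts Assumption~\ref{ass:Psi}(iii) to work, and it avoids ever differentiating $|x|^{-\theta}\partial_x\Psi_s$ a second time. Your decomposition is cleaner when $\Delta_\theta\Psi_s$ is classically integrable (as it happens to be for the $\Psi_{N,M}$ used in Section~\ref{sec_step5}), but under the stated hypotheses the justification of the double integration by parts is exactly the missing step, and the paper's more laborious route is there to supply it.

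Also a minor point on domination in (i): the support of $\Phi_x^m$ is $B(x,\frac{1}{m}+b_n)$, so the sup you use to dominate $|\langle\tilde X_s,\Phi_x^m\rangle|$ should run over a slight enlargement of $\mathrm{supp}\,\Psi_s$, not $\mathrm{supp}\,\Psi_s$ itself; the paper handles this by going through $\E\big[\langle|\tilde X_s|,\Phi_x^m\rangle\big]\le 2\sup_{y}\E[|\tilde X(s,y)|]$, which sidesteps the issue.
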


\begin{proof}
  (i) We need to rewrite $I_1^{m,n}$. We use the property of $\Phi_x^n$ from Proposition~\ref{prop:my_prop6}~(i) and the product rule to get
  \begin{align*}
    I_1^{m,n}(t)
    &=\int_0^t \int_{\mathbb{R}} \phi_n'(\langle \tilde{X}_s,\Phi_x^n \rangle)\int_{\mathbb{R}} \tilde{X}(s,y)\Delta_{y,\theta} \Phi_{x}^n(y)\dd y\, \Psi_s(x) \dd x \dd s\\
    &=\int_0^t \int_{\mathbb{R}} \phi_n'(\langle \tilde{X}_s,\Phi_x^n \rangle)\Delta_{x,\theta}(\langle \tilde{X}_s, \Phi_{x}^n\rangle) \Psi_s(x) \dd x \dd s\\
    &=2\alpha^2\int_0^t \int_{\mathbb{R}} \phi_n'(\langle \tilde{X}_s,\Phi_x^n \rangle) \Big(\frac{\partial}{\partial x}|x|^{-\theta}\frac{\partial}{\partial x}\langle \tilde{X}_s, \Phi_{x}^n\rangle\Big) \Psi_s(x) \dd x \dd s\\
    &\quad +2\alpha^2\int_0^t \int_{\mathbb{R}} \phi_n'(\langle \tilde{X}_s,\Phi_x^n \rangle) |x|^{-\theta}\Big(\frac{\partial^2}{\partial x^2}\langle \tilde{X}_s, \Phi_{x}^n\rangle\Big) \Psi_s(x) \dd x \dd s.
  \end{align*}
  Now, we use integration by parts for both summands and the compact support of $\Psi_s$ for every $s\in[0,T]$ to get
  \begin{align}\label{part1}
    I_1^{m,n}(t)
    &=-2\alpha^2\int_0^t \int_{\mathbb{R}}\psi_n(\langle \tilde{X}_s,\Phi_x^n \rangle) |x|^{-\theta}\bigg(\frac{\partial}{\partial x}\langle \tilde{X}_s \Phi_{x}^n\rangle\bigg)^2 \Psi_s(x) \dd x \dd s\notag\\
    &\quad-2\alpha^2\int_0^t \int_{\mathbb{R}} \phi_n'(\langle \tilde{X}_s,\Phi_x^n \rangle) |x|^{-\theta}\frac{\partial}{\partial x}\langle \tilde{X}_s \Phi_{x}^n\rangle \frac{\partial}{\partial x} \Psi_s(x) \dd x \dd s.
  \end{align}
  By a very similar partial integration we see that
  \begin{align}\label{part2}
    &\int_0^t \int_{\mathbb{R}} \phi_n'(\langle \tilde{X}_s,\Phi_x^n \rangle)\langle \tilde{X}_s,\Phi_x^n \rangle \Delta_\theta \Psi_s(x)\dd x\dd s\notag\\
    &\quad=-2\alpha^2 \int_0^t \int_{\mathbb{R}} \psi_n(\langle \tilde{X}_s,\Phi_x^n \rangle) \frac{\partial}{\partial x}\langle \tilde{X}_s,\Phi_x^n \rangle  \langle \tilde{X}_s,\Phi_x^n \rangle |x|^{-\theta}\frac{\partial}{\partial x} \Psi_s(x)\dd x\dd s\notag\\
    &\quad\quad-2\alpha^2 \int_0^t \int_{\mathbb{R}} \phi_n'(\langle \tilde{X}_s,\Phi_x^n \rangle)\frac{\partial}{\partial x}\langle \tilde{X}_s,\Phi_x^n \rangle |x|^{-\theta}\frac{\partial}{\partial x} \Psi_s(x)\dd x\dd s.
  \end{align}
  By identifying that the second term in \eqref{part1} coincides with the second term in \eqref{part2}, we can plug in the latter one into the first one to get
  \begin{align}\label{I1I2I3mn}
     I_1^{m,n}(t)&=-2\alpha^2\int_0^t \int_{\mathbb{R}}\psi_n(\langle \tilde{X}_s,\Phi_x^n \rangle) |x|^{-\theta}\bigg(\frac{\partial}{\partial x}\langle \tilde{X}_s \Phi_{x}^n\rangle\bigg)^2 \Psi_s(x) \dd x  \dd s\notag\\
     &\quad+2\alpha^2\int_0^t \int_{\mathbb{R}} \psi_n(\langle \tilde{X}_s,\Phi_x^n \rangle) \frac{\partial}{\partial x}\langle \tilde{X}_s,\Phi_x^n \rangle  \langle \tilde{X}_s,\Phi_x^n \rangle |x|^{-\theta}\frac{\partial}{\partial x} \Psi_s(x)\dd x\dd s\notag\\    
     &\quad+\int_0^t \int_{\mathbb{R}} \phi_n'(\langle \tilde{X}_s,\Phi_x^n \rangle)\langle \tilde{X}_s,\Phi_x^n \rangle \Delta_\theta \Psi_s(x)\dd x\dd s\notag\\   
     &=\int_0^t \big(I_{1,1}^{m,n}(s) + I_{1,2}^{m,n}(s) + I_{1,3}^{m,n}(s)\big)\dd s.
  \end{align}
  In order to deal with the various parts of $I_1^{m,n}$, we start with treating $I_{1,1}^{m,n}$ and $I_{1,2}^{m,n}$. Since we want to show that these parts are less than or equal to $0$, we define for fixed $s\in [0,t]$:
  \begin{align*}
    A^s&:=\bigg\{ x\in\mathbb{R}\,:\,\bigg( \frac{\partial}{\partial x}\langle \tilde{X}_s,\Phi_x^n \rangle \bigg)^2 \Psi_s(x)\leq \langle \tilde{X}_s,\Phi_x^n \rangle \frac{\partial}{\partial x} \langle \tilde{X}_s,\Phi_x^n \rangle \frac{\partial}{\partial x}\Psi_s(x) \bigg\}\\
    &\quad\quad\cap \lbrace x\in\mathbb{R}\,: \,\Psi_s(x)>0 \}\\
    &=A^{+,s} \cup A^{-,s} \cup A^{0,s},
  \end{align*}
  with
  \begin{align*}
    &A^{+,s}:=A^s\cap \bigg\lbrace \frac{\partial}{\partial x}\langle \tilde{X}_s,\Phi_x^n \rangle >0  \bigg\rbrace,\quad
    A^{-,s}:=A^s\cap \bigg \lbrace \frac{\partial}{\partial x}\langle \tilde{X}_s,\Phi_x^n \rangle <0  \bigg\rbrace\quad\text{and}\\
    &A^{0,s}:=A^s\cap \bigg \lbrace \frac{\partial}{\partial x}\langle \tilde{X}_s,\Phi_x^n \rangle =0  \bigg\rbrace.
  \end{align*}
  By Assumption~\ref{ass:Psi}~(i) and (iii), we can find an $\epsilon>0$ such that
  \begin{equation}\label{epsilon}
    B(0,\epsilon)\subset \Gamma(t)\quad \text{and}\quad \inf\limits_{s\leq t, x\in B(0,\epsilon)}\Psi_s(x)>0.
  \end{equation}
  On $A^{+,s}$ we have, by the definition of $A^s$, that
  \begin{equation*}
    0<\bigg( \frac{\partial}{\partial x}\langle \tilde{X}_s,\Phi_x^n \rangle \bigg)\Psi_s(x)\leq \langle \tilde{X}_s,\Phi_x^n \rangle\frac{\partial}{\partial x}\Psi_s(x),
  \end{equation*}
  and, therefore, we can bound the $A^{+,s}$-part of $I_{1,2}^{m,n}$ for any $t\in [0,T]$ by
  \begin{align*}
    &\int_0^t \int_{A^{+,s}} \psi_n(\langle \tilde{X}_s,\Phi_x^n \rangle) \frac{\partial}{\partial x}\langle \tilde{X}_s,\Phi_x^n \rangle  \langle \tilde{X}_s,\Phi_x^n \rangle |x|^{-\theta}\frac{\partial}{\partial x} \Psi_s(x)\dd x\dd s\\
    &\quad\leq\int_0^t \int_{A^{+,s}} \psi_n(\langle \tilde{X}_s,\Phi_x^n \rangle)|x|^{-\theta} \langle \tilde{X}_s,\Phi_x^n \rangle^2 \frac{(\frac{\partial}{\partial x} \Psi_s(x))^2}{\Psi_s(x)} \dd x\dd s\\
    &\quad\leq\int_0^t \int_{A^{+,s}}\frac{2}{n} \mathbbm{1}_{\lbrace a_{n-1}\leq |\langle \tilde{X}_s,\Phi_x^n \rangle| \leq a_n\rbrace} |x|^{-\theta} \langle \tilde{X}_s,\Phi_x^n \rangle \frac{(\frac{\partial}{\partial x} \Psi_s(x))^2}{\Psi_s(x)} \dd x\dd s\\
    &\quad\leq \frac{2a_n}{n}\int_0^t \int_{\mathbb{R}} \mathbbm{1}_{\lbrace \Psi_s(x)>0\rbrace} |x|^{-\theta} \frac{(\frac{\partial}{\partial x} \Psi_s(x))^2}{\Psi_s(x)} \dd x\dd s.
  \end{align*}
  Next, we split the integral by using $\epsilon$ from \eqref{epsilon} to be able to apply Assumption~\ref{ass:Psi} and Lemma~\ref{lem:calculus} and get
  \begin{align*}
    &\int_0^t \int_{A^{+,s}} \psi_n(\langle \tilde{X}_s,\Phi_x^n \rangle) \frac{\partial}{\partial x}\langle \tilde{X}_s,\Phi_x^n \rangle  \langle \tilde{X}_s,\Phi_x^n \rangle |x|^{-\theta}\frac{\partial}{\partial x} \Psi_s(x)\dd x\dd s\\
    &\quad\leq \frac{2a_n}{n}\int_0^t \bigg( \int_{B(0,\epsilon)} |x|^{-\theta} \frac{(\frac{\partial}{\partial x} \Psi_s(x))^2}{\Psi_s(x)} \dd x +2\|D^2\Psi_s\|_\infty \int_{\Gamma(t)\setminus B(0,\epsilon)}|x|^{-\theta}\dd x\bigg) \dd s\\
    &\quad=:\frac{2a_n}{n}C(\Psi,t).
  \end{align*}
  Note that $\epsilon>0$ is fixed and thus the $\epsilon$-dependence of $C(\Psi,t)$ does not matter.
  
  On the set $A^{-,s}$,
  \begin{equation}\label{Aminus}
    0>\bigg( \frac{\partial}{\partial x}\langle \tilde{X}_s,\Phi_x^n \rangle \bigg)\Psi_s(x)
    \geq \langle \tilde{X}_s,\Phi_x^n \rangle\frac{\partial}{\partial x}\Psi_s(x),
  \end{equation}
  holds and, since both terms in \eqref{Aminus} are negative, we can use the same calculation as above to get
  \begin{equation*}
    \int_0^t \int_{A^{+,s}} \psi_n(\langle \tilde{X}_s,\Phi_x^n \rangle) \frac{\partial}{\partial x}\langle \tilde{X}_s,\Phi_x^n \rangle  \langle \tilde{X}_s,\Phi_x^n \rangle |x|^{-\theta}\frac{\partial}{\partial x} \Psi_s(x)\dd x\dd s
    \leq \frac{2a_n}{n}C(\Psi,t).
  \end{equation*}
  Finally, on the set $A^{0,s}$,
  \begin{equation*}
    \int_0^t \int_{A^{+,s}} \psi_n(\langle \tilde{X}_s,\Phi_x^n \rangle) \frac{\partial}{\partial x}\langle \tilde{X}_s,\Phi_x^n \rangle  \langle \tilde{X}_s,\Phi_x^n \rangle |x|^{-\theta}\frac{\partial}{\partial x} \Psi_s(x)\dd x\dd s=0
  \end{equation*}
  and thus
  \begin{equation*}
    \mathbb{E}[I_{1,1}^{m,n}(t\wedge \mathcal{T})+I_{1,2}^{m,n}(t\wedge \mathcal{T})]
    \leq 4\alpha^2C(\Psi,t)\frac{a_n}{n} \to 0\quad\text{as } n\to\infty.
  \end{equation*}
  The remaining term in \eqref{I1I2I3mn}, we have to deal with, is
  \begin{equation*}
    I_{1,3}^{m,n}=\int_0^t \int_{\mathbb{R}} \phi_n'(\langle \tilde{X}_s,\Phi_x^n \rangle)\langle \tilde{X}_s,\Phi_x^n \rangle \Delta_\theta \Psi_s(x)\dd x\dd s.
  \end{equation*}
  Therefore, we apply Proposition~\ref{prop:my_prop6}~(iv) to get the pointwise convergence
  \begin{equation*}
    \phi_n'(\langle \tilde{X}_s,\Phi_x^n \rangle)\langle \tilde{X}_s,\Phi_x^n \rangle\to \tilde{X}(s,x) \quad\text{as } m,n\to\infty.
  \end{equation*} 
  To complete our proof, we only need to show uniform integrability of $|\phi_n'(\langle \tilde{X}_s,\Phi_x^n \rangle)\langle \tilde{X}_s,\Phi_x^n \rangle|$ in terms of $m,n\in\mathbb{N}$ on $([0,T]\times B(0,J(t))\times \Omega)$, since $\Psi$ vanishes outside $B(0,J(t))$. First, by the inequality $|\phi_n'|\leq 1$, we can bound
  \begin{align*}
    |\phi_n'(\langle \tilde{X}_s,\Phi_x^n \rangle)\langle \tilde{X}_s,\Phi_x^n \rangle|\leq \langle |\tilde{X}_s|,\Phi_x^n \rangle.
  \end{align*}
  Inserting the function $\Phi^{n}$ from \eqref{phi}, taking the mean and using Proposition~\ref{prop: regularity result}~(i), we can bound
  \begin{align}\label{unif_iint}
    \E[|\langle |\tilde{X}_s|,\Phi_x^n \rangle|]
    \leq\mathbb{E}\bigg[ \int_{\mathbb{R}}|\tilde{X}(s,y)|2\tilde{\Phi}_x^{m}(y) \dd y \bigg]
    \leq 2\sup\limits_{y\in\mathbb{R}} \mathbb{E}[|\tilde{X}(s,y)|] \int_{\mathbb{R}}\tilde{\Phi}_x^{m^{(n)}}(y)\dd y<\infty,
  \end{align}
  thus the claimed integrability holds and we get
  \begin{equation*}
    \lim\limits_{m,n\to\infty} \mathbb{E}[ I_{1,3}^{m,n}(t\wedge \mathcal{T})]
    \leq \mathbb{E}\bigg[ \int_0^{t\wedge \mathcal{T}}\int_{\mathbb{R}}|\tilde{X}(s,x)|\Delta_\theta \Psi_s(x)\dd x\dd s \bigg]
  \end{equation*}
  and, altogether, we have shown the statement.
      
  (ii) Again the inequality $|\phi_n'|\leq 1$ and the Lipschitz continuity of $\mu$ yield
  \begin{equation*}
    \mathbb{E}[ I_2^{m,n}(t\wedge \mathcal{T})]
    \lesssim \int_0^{t\wedge \mathcal{T}}\bigg(\int_{\R}\Phi_x^n(0)\Psi_s(x)\dd x\bigg)\mathbb{E}[|\tilde{X}(s,0)|]\dd s.
  \end{equation*}
  Sending $m,n\to\infty$ gives the statement as $\Phi^n_x(0)\to\delta_0(x)$.
      
  (iii) We set $g_{m,n}(s):=\langle \phi_n'(\langle \tilde{X}_s,\Phi_\cdot^n \rangle)\Phi_\cdot^n(0),\Psi_s \rangle$. Then, by $|\phi_n'|\leq 1$, one has
  \begin{equation*}
    |g_{m,n}(s)|
     =\bigg| \int_{\mathbb{R}} \phi_n'(\langle \tilde{X}_s,\Phi_x^n \rangle)\Phi_x^n(0)\Psi_s(x) \dd x \bigg|
     \leq \|\Psi\|_\infty \int_{\mathbb{R}} 2\tilde{\Phi}_0^{m}(x) \dd x
     =2\|\Psi\|_\infty
  \end{equation*}
  by the construction of $\Phi^{n}$ in~\eqref{phi}. Thus, $I_3^{m,n}(t\wedge \mathcal{T})$ is a continuous local martingale with quadratic variation
  \begin{align*}
    \langle I_3^{m,n} \rangle_{t\wedge \mathcal{T}}
    &\leq 4\|\Psi\|_\infty^2 \int_0^{t\wedge \mathcal{T}}( \sigma(s,X^1(s,0))-\sigma(s,X^2(s,0)))^2\dd s\\
    &\lesssim \int_0^{t\wedge \mathcal{T}}(|X^1(s,0)|+|X^2(s,0)|+2)^2\dd s
  \end{align*}
  by the growth condition on $\sigma$ and, consequently, by Proposition~\ref{prop: regularity result},
  \begin{equation*}
    \E[\langle I_3^{m,n} \rangle_{t\wedge \mathcal{T}}]<\infty,
  \end{equation*}
  such that $I_3^{m,n}(t\wedge \mathcal{T})$ is a square integrable martingale with mean~$0$.
  
  (iv) We want to calculate the limit as $n,m\to \infty$ of the term
  \begin{equation*}
    \mathbb{E}[I_5^{m,n}(t\wedge \mathcal{T})]
    =\mathbb{E}\bigg[ \int_0^{t\wedge \mathcal{T}} \langle \phi_n(\langle \tilde{X}_s,\Phi_\cdot^n \rangle),\dot{\Psi}_s \rangle \dd s \bigg].
  \end{equation*}
  Therefore, the same argumentation as in (i) with the uniform integrability in \eqref{unif_iint} and the boundedness of $|\dot{\Psi}_s|$ as a continuous function with compact support yield
  \begin{equation*}
    \lim\limits_{m,n\to\infty}\mathbb{E}[ I_5^{m,n}(t\wedge \mathcal{T}) ]
    = \mathbb{E}\bigg[ \int_0^{t\wedge \mathcal{T}}\int_{\mathbb{R}}|\tilde{X}(s,x)|\dot{\Psi}_s(x)\dd x\dd s \bigg].
  \end{equation*}
\end{proof}

\subsection{Key argument: Bounding the quadratic variation term}

What is left to bound in line \eqref{I1I2I5}, is the expectation of the quadratic variation term $I_4^{m,n}$. The main ingredient to be able to do this, will be the following Theorem~\ref{thm:thm1}.

Let us first introduce some definitions that we need to formulate the Theorem~\ref{thm:thm1}. Recall the definition of $T_K$ in~\eqref{def_stopptimeK}. Moreover, we define a semimetric on $[0,T]\times \mathbb{R}$ by
\begin{equation*}
  d((t,x),(t',x')):=|t-t'|^\alpha+|x-x'|,\quad t,t'\in [0,T],x,x'\in \mathbb{R},
\end{equation*}
and, for $K>0$, $N\in\N$ and $\zeta\in (0,1)$, the set
\begin{align}\label{def_Z}
  Z_{K,N,\zeta}
  &:=
  \left \{ (t,x)\in [0,T]\times [-1/2,1/2]: \begin{array}{l}
  t\leq T_K, \,|x|\leq 2^{-N\alpha-1},\\
  |t-\hat{t}|\leq 2^{-N}\,|x-\hat{x}|\leq 2^{-N\alpha},   \\
  \text{for some }(\hat{t},\hat{x})\in [0,T_K]\times[-1/2,1/2]\\
  \text{satisfying }|\tilde{X}(\hat{t},\hat{x})|\leq 2^{-N\zeta}
  \end{array}
  \right \}.
\end{align}

The following theorem improves the regularity of $\tilde{X}(t,x)$ when $|x|$ is small. For two measures $\mathbb{Q}_1$ and $\mathbb{Q}_2$ on some measurable space $(\tilde{\Omega},\tilde{\mathscr{F}})$, we call $\mathbb{Q}_1$ absolutely continuous with respect to $\mathbb{Q}_2$, denoted by $\mathbb{Q}_1\ll\mathbb{Q}_2$, if $\mathscr{N}_{1}\supseteq\mathscr{N}_{2}$, where $\mathscr{N}_i\in\tilde{\mathscr{F}}$ denotes the zero sets of $\mathbb{Q}_i$ in $(\tilde{\Omega},\tilde{\mathscr{F}})$.

\begin{theorem}\label{thm:thm1}
  Suppose Assumption~\ref{ass:coefficients} and let $\tilde{X}:=X^1-X^2$, where $X^i$ is a solution of the SPDE~\eqref{SPDE} with $X^i\in C([0,T]\times \R)$ a.s. for $i=1,2$. Let $\zeta \in (0,1)$ satisfy:
  \begin{align}\label{5.1}
    \exists N_\zeta &= N_\zeta(K,\omega)\in \mathbb{N}\text{ a.s. such that, for any }N\geq N_\zeta\text{ and any }(t,x)\in Z_{K,N,\zeta}:\notag\\
    &\left. \begin{array}{c} |t'-t|\leq 2^{-N},t'\leq T_K\\|y-x|\leq 2^{-N\alpha} \end{array} \right\} \quad \Rightarrow \quad |\tilde{X}(t,x)-\tilde{X}(t',y)|\leq 2^{-N\zeta}.
  \end{align}
  Let $\frac{1}{2}-\alpha<\zeta^1 < ( \zeta\xi+\frac{1}{2}-\alpha) \wedge 1$. Then, there is an $N_{\zeta^1}(K,\omega,\zeta)\in\mathbb{N}$ a.s. such that, for any $N\geq N_{\zeta^1}$ and any $(t,x)\in Z_{K,N,\zeta^1}$:
  \begin{align}\label{5.2}
    &\left. \begin{array}{c} |t'-t|\leq 2^{-N},t'\leq T_K\\|y-x|\leq 2^{-N\alpha} \end{array} \right\} \quad \Rightarrow \quad |\tilde{X}(t,x)-\tilde{X}(t',y)|\leq 2^{-N\zeta^1}.
  \end{align}
  Moreover, there is some measure $\Q^{X,K}$ on $(\Omega,\mathscr{F})$ such that $\mathbb{Q}^{X,K}\ll\P$ on $(\Omega,\mathscr{F})$ and $\P\ll\mathbb{Q}^{X,K}$ on $(\Omega,\mathscr{F}^K)$, where $\mathscr{F}^K:=\lbrace A\cap \lbrace T_K\geq T \rbrace\colon A\in\mathscr{F}\rbrace \subseteq\mathscr{F}$ is the $\sigma$-algebra restricted to $\lbrace T_K\geq T \rbrace$, and there are constants $R>1$ and $\delta,C,c_2>0$ depending on $\zeta$ and $\zeta^1$ (not on $K$) and $N(K)\in\mathbb{N}$ such that
  \begin{equation}\label{thm_probbound}
    \mathbb{Q}^{X,K}(N_{\zeta^1}\geq N)\leq C\bigg( \mathbb{Q}^{X,K}\bigg(N_\zeta\geq \frac{N}{R}\bigg)+ Ke^{-c_22^{N\delta}} \bigg)
  \end{equation}
  for $N\geq N(K)$.
\end{theorem}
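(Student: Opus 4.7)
\textit{Proposal of proof.} The strategy is a Girsanov--bootstrap argument in the spirit of Mytnik--Perkins~\cite{Mytnik2011} and Mytnik--Salisbury~\cite{Mytnik2015}, with the additional ingredient that the Lipschitz drift~$\mu$ is absorbed through the change of measure $\mathbb{Q}^{X,K}$. The idea is that (5.1) provides a baseline local modulus of continuity for $\tilde{X}=X^1-X^2$ at ``good'' points (those in $Z_{K,N,\zeta}$); this baseline makes the integrand $\sigma(s,X^1(s,0))-\sigma(s,X^2(s,0))$ small on small time intervals, and Gaussian tail bounds for stochastic integrals then upgrade the modulus to a larger exponent.

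First I would construct $\mathbb{Q}^{X,K}$. On $\{T_K\geq T\}$ Assumption~\ref{ass:coefficients}(iii) yields a bounded process
\[
h_s:=\frac{\mu(s,X^1_s)-\mu(s,X^2_s)}{\sigma(s,X^1_s)-\sigma(s,X^2_s)}\mathbf{1}_{\{s\leq T_K\}},
\qquad |h_s|\leq C_K,
\]
so Novikov's condition holds and $\mathrm{d}\mathbb{Q}^{X,K}/\mathrm{d}\mathbb{P}:=\mathcal{E}(-\int_0^{\cdot}h_s\,\mathrm{d}B_s)_T$ is a well-defined density. The boundedness of $h$ on $\{T_K\geq T\}$ immediately gives $\mathbb{Q}^{X,K}\ll\mathbb{P}$ on $\mathscr{F}$ and, restricted to $\mathscr{F}^K$, the reverse absolute continuity $\mathbb{P}\ll\mathbb{Q}^{X,K}$. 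Under $\mathbb{Q}^{X,K}$, $\tilde{B}_t:=B_t+\int_0^t h_s\,\mathrm{d}s$ is a Brownian motion and, on $\{T_K\geq T\}$, the drift term in \eqref{SPDE} cancels, so
\[
\tilde{X}(t,x)=\int_0^t p^\theta_{t-s}(x)\bigl(\sigma(s,X^1(s,0))-\sigma(s,X^2(s,0))\bigr)\mathrm{d}\tilde{B}_s.
\]

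Next I would carry out the scale-by-scale bootstrap. Fix $N\in\mathbb{N}$ and $(t,x)\in Z_{K,N,\zeta^1}$ with witness $(\hat{t},\hat{x})$ satisfying $|\tilde{X}(\hat{t},\hat{x})|\leq 2^{-N\zeta^1}$. For $(t',y)$ in the $(2^{-N},2^{-N\alpha})$-neighbourhood of $(t,x)$, write $\tilde{X}(t',y)-\tilde{X}(t,x)$ via triangle inequality through $(\hat{t},\hat{x})$. The middle witness term is $\leq 2^{-N\zeta^1}$ directly. Each remaining difference, e.g. $\tilde{X}(t,x)-\tilde{X}(\hat{t},\hat{x})$, is split into a ``far'' stochastic integral over $[0,\hat{t}-2^{-N}]$ (kernel difference) and a ``near'' integral over $[\hat{t}-2^{-N},t]$. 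The far piece is controlled, under a BDG-type estimate, by Lemmas~\ref{lem:6.2} and~\ref{lem:6.3}, yielding a bound $\lesssim|t-\hat{t}|^{\frac{1}{2}-\alpha}+|x-\hat{x}|^{\frac{1}{2}-\alpha}\lesssim 2^{-N(\frac{1}{2}-\alpha)}$. For the near piece one uses that, by the baseline (5.1) applied at a point of scale $N':=\lfloor N\zeta^1/\zeta\rfloor$ near $(\hat{t},\hat{x})$ (available provided $N_\zeta\leq N/R$ for an appropriate $R>1$), the diffusion coefficient obeys $|\sigma(s,X^1(s,0))-\sigma(s,X^2(s,0))|\leq C|\tilde{X}(s,0)|^\xi\lesssim 2^{-N'\zeta\xi}\lesssim 2^{-N\zeta^1\xi/\zeta^0}$ for some $\zeta^0<\zeta$. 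The quadratic variation of the near piece is then bounded by $C\,2^{-2N\zeta\xi}\int_{\hat t-2^{-N}}^{t}p^\theta_{t-s}(x)^2\,\mathrm{d}s\lesssim 2^{-N(2\zeta\xi+1-2\alpha)}$, and Dubins--Schwarz plus the Gaussian tail give
\[
\mathbb{Q}^{X,K}\bigl(|\text{near piece}|>2^{-N\zeta^1}\bigr)\lesssim \exp\bigl(-c\,2^{N(2\zeta\xi+1-2\alpha-2\zeta^1)}\bigr),
\]
which decays like $\exp(-c_2 2^{N\delta})$ for some $\delta>0$ thanks to the choice $\zeta^1<\zeta\xi+\tfrac{1}{2}-\alpha$.

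Finally I would cover $Z_{K,N,\zeta^1}$ with $\lesssim K\,2^{N(1+\alpha)}$ grid-boxes of size $(2^{-N},2^{-N\alpha})$ (plus $(t',y)$-endpoints) and take a union bound, producing the exceptional term $Ke^{-c_2 2^{N\delta}}$ in \eqref{thm_probbound}. The set on which the baseline is unavailable at scale $N/R$ contributes the remaining term $\mathbb{Q}^{X,K}(N_\zeta\geq N/R)$, completing the bound. The main obstacle is the bookkeeping in the bootstrap: one must show that a witness $(\hat{t},\hat{x})$ for $\zeta^1$ at scale $N$ can be paired with a witness for $\zeta$ at some coarser scale $N/R$ (so that (5.1) is actually applicable there), and that this coarsening costs only a factor $R>1$ in scale rather than degrading the exponent; simultaneously, the constants from BDG, from the kernel bounds of Lemmas~\ref{lem:6.2}--\ref{lem:6.3}, and from the Girsanov reweighting must be tracked uniformly in $N$ so that the strictness $\zeta^1<\zeta\xi+\tfrac{1}{2}-\alpha$ is preserved in the Gaussian exponent.
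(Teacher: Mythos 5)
Your overall strategy is the same as the paper's: construct $\mathbb{Q}^{X,K}$ via Girsanov to absorb the drift, split the stochastic integral representing $\tilde{X}(t,x)-\tilde{X}(t',y)$ into a ``near'' and a ``far'' part, use the baseline modulus~\eqref{5.1} to shrink the quadratic variation of the near part, apply Dambis--Dubins--Schwarz plus a Gaussian tail, and finish by a union bound over a grid followed by a dyadic chaining step (your closing paragraph correctly anticipates both the scale-coarsening factor~$R$ and the chaining bookkeeping). The Girsanov construction and the near-piece quadratic-variation estimate $\lesssim 2^{-N(2\zeta\xi+1-2\alpha)}$ are right, and your Gaussian exponent correctly requires $\zeta^1<\zeta\xi+\tfrac12-\alpha$.

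However, there is a genuine gap in your treatment of the far piece. You claim that the far contribution (the stochastic integral over $[0,\hat t-2^{-N}]$ with the kernel difference) is controlled via BDG and Lemmas~\ref{lem:6.2}--\ref{lem:6.3} by $\lesssim 2^{-N(\frac12-\alpha)}$. But the target modulus is $2^{-N\zeta^1}$ with $\zeta^1>\tfrac12-\alpha$, so this bound is strictly weaker than what is needed and cannot close the argument: if you treat $\sigma(s,X^1(s,0))-\sigma(s,X^2(s,0))$ as merely bounded by $O(K^\xi)$ on the far interval, the quadratic variation is $\gtrsim K^{2\xi}\epsilon^{1-2\alpha}$, and neither BDG/Markov nor the Gaussian tail can push the exceedance probability of $2^{-N\zeta^1}$ to zero (the exponent $1-2\alpha-2\zeta^1$ is negative). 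The paper's proof avoids this by also feeding the baseline regularity into the far piece: from~\eqref{5.1} one extracts $|\tilde{X}(s,0)|\leq (4+K)2^{\zeta N_\zeta}(t-s)^\zeta$ for $s\in[0,t-2^{-N}]$ (so that the diffusion coefficient gains a $(t-s)^{\zeta\xi}$ factor at the price of the prefactor $2^{\zeta N_\zeta}$), and in addition uses the small-$|x|$ improvement $\max(|x|,|y|)^{(\frac1\alpha-1)\beta}\lesssim \epsilon^{(1-\alpha)\beta}$ from Lemma~\ref{lem:6.1} (this is the $\epsilon^{2(1-\alpha)}$ factor in the $D_2, D_4$ estimates and the $|x|^{2/\alpha}$ factor in $D_{4,2}$). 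Together these give a far-piece quadratic-variation bound of the form $2^{2\xi\zeta N_\zeta}|x-y|^{1-2\alpha-2\delta'}\epsilon^{2\hat p}$ with $\hat p$ strictly larger than what you can reach, and the prefactor $2^{2\xi\zeta N_\zeta}$ is then absorbed precisely by requiring $N\geq c\,N_\zeta$, which is the origin of the constant $R$ in~\eqref{thm_probbound}. In short: the baseline~\eqref{5.1} must be used on the \emph{whole} interval $[0,t]$, not just the near part, and the gain from $|x|$ being small (via Lemma~\ref{lem:6.1}) is essential; your proposal invokes neither for the far piece, and the estimate as written does not yield the claimed exponent $\zeta^1$.

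A secondary, more standard omission: your union bound over $\lesssim K\,2^{N(1+\alpha)}$ grid boxes controls $\tilde{X}$ at grid points only. To pass to arbitrary $(t,x)\in Z_{K,N,\zeta^1}$ and arbitrary $(t',y)$ in the $(2^{-N},2^{-N\alpha})$-neighbourhood, one needs the telescoping chaining over all dyadic scales $n\geq N$ (the $M_{n,N,K}$ construction in Step~(iii) of the paper), summing $\sum_{n\geq N}2^{-n\alpha(\frac12-\alpha-\delta)}2^{-Np}$, and verifying that the chain stays inside $Z_{K,N-m,\zeta}$. You gesture at this in your final paragraph but do not carry it out; it is needed to convert the scale-$N$ grid bound into the uniform modulus in~\eqref{5.2}.
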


\begin{proof}[Proof of Theorem~\ref{thm:thm1}]
  From the assumptions of Theorem~\ref{thm:thm1} and Assumption~\ref{ass:coefficients}, we are given the variables $\alpha\in [0,\frac{1}{2})$, $\zeta\in (0,1)$, $\xi \in (\frac{1}{2(1-\alpha)},1]$ and $\zeta_1<(\zeta\xi+\frac{1}{2}-\alpha)\wedge 1$. Moreover, fix arbitrary $(t,x),(t',y)\in [0,T_K]\times [-\frac{1}{2},\frac{1}{2}]$ such that w.l.o.g. $t\leq t'$ and given some $N\geq N_\zeta$,
  \begin{equation}\label{tundtstrich}
    |t-t'|\leq \epsilon:=2^{-N},\quad
    |x|\leq 2^{-N\alpha}\quad \text{and}\quad
    |x-y|\leq 2^{-N\alpha}.
  \end{equation}
  We define small numbers $\delta,\delta',\delta_1,\delta_2>0$ in the following way. We choose $\delta\in (0,\frac{1}{2}-\alpha)$ such that
  \begin{equation*}
    \zeta_1<\bigg( \bigg(\zeta\xi+\frac{1}{2}-\alpha\bigg)\wedge 1\bigg)-\alpha\delta<1.
  \end{equation*}
  Fixing $\delta'\in (0,\delta)$, we choose $\delta_1\in (0,\delta')$ sufficiently small that
  \begin{equation}\label{def:delta1}
    \zeta_1<\bigg( \bigg(\zeta\xi+\frac{1}{2}-\alpha\bigg)\wedge 1\bigg)-\alpha\delta+\alpha\delta_1<1.
  \end{equation}
  Furthermore, we define $\delta_2>0$ sufficiently small such that
  \begin{equation}\label{delta2}
    \delta'-\delta_2>\delta_1,
  \end{equation}
  and we set
  \begin{equation} \label{def:p}
    p:=\bigg( \bigg(\zeta\xi+\frac{1}{2}-\alpha\bigg)\wedge 1\bigg)-\alpha \bigg(\frac{1}{2}-\alpha\bigg)+\alpha\delta_1
  \end{equation}
  and
  \begin{align}\label{p_hut}
    \hat{p}&:=p+\alpha(\delta'-\delta_2-\delta_1)=\bigg(\bigg(\zeta\xi+\frac{1}{2}-\alpha\bigg)\wedge  1\bigg)-\alpha\bigg(\frac{1}{2}-\alpha\bigg)+\alpha(\delta'-\delta_2).
  \end{align}
  By \eqref{delta2}, we see that $\hat{p}>p$.

  Moreover, we introduce
  \begin{equation}\label{def_Dxytt}
    D^{x,y,t,t'}(s):=|p_{t-s}(x)-p_{t'-s}(y)|^2|\tilde{X}(s,0)|^{2\xi}
    \quad \text{and}\quad
    D^{x,t'}(s):=p_{t'-s}(x)^2|\tilde{X}(s,0)|^{2\xi}.
  \end{equation}

  Our goal is to bound the following expression, where we will explicitly determine the measure $\mathbb{Q}$ as in the statement of the theorem and the random variable $N_1:=N_1(\omega)$ (in \eqref{N1}), later:
  \begin{align}
    &\mathbb{Q}\bigg( |\tilde{X}(t,x)-\tilde{X}(t,y)|\geq |x-y|^{\frac{1}{2}-\alpha-\delta}\epsilon^p,(t,x)\in Z_{K,N,\zeta},N\geq N_1\bigg)\notag\\
    &+\mathbb{Q}\bigg( |\tilde{X}(t',x)-\tilde{X}(t,x)|\geq |t'-t|^{\alpha ( \frac{1}{2}-\alpha-\delta )}\epsilon^p,\,(t,x)\in Z_{K,N,\zeta},N\geq N_1\bigg)\notag\\
    &\quad\leq \mathbb{Q}\bigg( |\tilde{X}(t,x)-\tilde{X}(t,y)|\geq |x-y|^{\frac{1}{2}-\alpha-\delta}\epsilon^p,(t,x)\in Z_{K,N,\zeta},N\geq N_1,\notag\\
    &\quad\quad\quad \quad \int_0^t D^{x,y,t,t}(s)\dd s \leq |x-y|^{1-2\alpha-2\delta'}\epsilon^{2p} \bigg)\notag\\
    &\quad\quad+ \mathbb{Q}\bigg( |\tilde{X}(t',x)-\tilde{X}(t,x)|\geq |t'-t|^{\alpha ( \frac{1}{2}-\alpha-\delta)}\epsilon^p,(t,x)\in Z_{K,N,\zeta},N\geq N_1,\notag\\
    &\quad\quad\quad\quad \int_t^{t'}D^{x,t'}(s)\dd s+ \int_0^t D^{x,x,t,t'}(s)\dd s \leq (t'-t)^{2\alpha ( \frac{1}{2}-\alpha-\delta')}\epsilon^{2p} \bigg)\notag\\
    &\quad\quad+ \mathbb{Q}\bigg( \int_0^t D^{x,y,t,t}(s)\dd s > |x-y|^{1-2\alpha-2\delta'}\epsilon^{2p},(t,x)\in Z_{K,N,\zeta},N\geq N_1 \bigg)\notag\\
    &\quad\quad+ \mathbb{Q}\bigg( \int_t^{t'}D^{x,t'}(s)\dd s+ \int_0^t D^{x,x,t,t'}(s)\dd s >(t'-t)^{2\alpha ( \frac{1}{2}-\alpha-\delta' )}\epsilon^{2p} ,\notag\\
    &\quad\quad\quad\quad\quad(t,x)\in Z_{K,N,\zeta},N\geq N_1\bigg)\notag\\
    &\quad=:Q_1+Q_2+Q_3+Q_4. \label{q1q2q3q4}
  \end{align}
  We will proceed in three steps to prove the theorem:
  \begin{itemize}
    \item[Step (i):] explicitly choosing a measure $\mathbb{Q}^{X,K}$ as in the statement of the theorem, such that $Q_1$ and $Q_2$ in \eqref{q1q2q3q4} fulfill $Q_1+Q_2\leq ce^{-c'|t'-t|^{-2\alpha \delta''}}$ for some $c,c'>0$,
    \item[Step (ii):] showing that $Q_3=Q_4=0$ holds w.r.t.~$\P$ (and hence also w.r.t. $\mathbb{Q}^{X,K}$, since $\mathbb{Q}^{X,K}~\ll~\P$), if we choose the random variable $N_1:=cN_\zeta$ for some large enough deterministic constant $c>0$,
    \item[Step (iii):] completing the proof, using Step (i) and Step (ii).
  \end{itemize}

  \textit{Step (i):}
  Consider first the term $Q_1$. Note that on the measurable space $(\Omega,\mathscr{F}^K)$, where the restricted $\sigma$-algebra $\mathscr{F}^K$ on $\lbrace T_K\geq T\rbrace$ is defined in the statement of the theorem, Assumption~\ref{ass:coefficients}~(iii) yields the existence of some constant $C_K>0$ such that
  \begin{align*}
    \bigg| \frac{\mu(s,X^1(s,0))-\mu(s,X^2(s,0))}{\sigma(s,X^1(s,0))-\sigma(s,X^2(s,0))}\bigg|\leq C_K<\infty,
  \end{align*}
  for all $s\in[0,T]$ $\P$-a.s. on $(\Omega,\mathscr{F}^K)$ and, thus, we can apply Girsanov's theorem (see~\cite[Theorem~3.5.1]{Karatzas1988}) with the adapted process $(L_t)_{t\in[0,T]}$ defined by
  \begin{equation*}
    L_t:=-\int_0^t \frac{\mu(s,X^1(s,0))-\mu(s,X^2(s,0))}{\sigma(s,X^1(s,0))-\sigma(s,X^2(s,0))}\dd B_s,
  \end{equation*}
  whose stochastic exponential process $\mathscr{E}(L_t)$ is a martingale due to Novikov's condition (see \cite[Proposition~3.5.12]{Karatzas1988}). We define $\mathbb{Q}^{X,K}$ via the Radon--Nikodym derivative~$\mathscr{E}(L_T)$ of the measure~$\mathbb{Q}^{X,K}$ with respect to~$\P$, under which the process $(\tilde{B}^{X,K}_t)_{t\in[0,T]}$ is a Brownian motion, where $\tilde{B}^{X,K}_t=B_t-\langle B,L \rangle_t=B_t+A_t $ with $A_t:=\int_0^t \frac{\mu(s,X^1(s,0))-\mu(s,X^2(s,0))}{\sigma(s,X^1(s,0))-\sigma(s,X^2(s,0))}\dd s$ on $[0,T_K]$.

  To avoid measurability problems we re-define $\mathbb{Q}^{X,K}$ as a measure on $(\Omega,\mathscr{F})$ by setting
  \begin{equation*}
    \mathbb{Q}^{X,K}(A):=\mathbb{Q}^{X,K}(A\cap \lbrace T_K\geq T\rbrace )
  \end{equation*}
  for $A\in\mathscr{F}$. Girsanov's theorem implies that $\mathbb{Q}^{X,K}\ll \P$ on $(\Omega,\mathscr{F})$ and $\P\ll\mathbb{Q}^{X,K}$ on $(\Omega,\mathscr{F}^K)$.
  With this notation, we see that
  \begin{align*}
    &\tilde{X}(t,x)-\tilde{X}(t,y)\\
    &\quad=\int_0^t p_{t-s}^\theta(x)\Big( \sigma(s,X^1(s,0))-\sigma(s,X^2(s,0))\Big)\dd ( B_s+ A_s)\\
    &\quad\quad - \int_0^t p_{t-s}^\theta(y)\Big( \sigma(s,X^1(s,0))-\sigma(s,X^2(s,0))\Big)\dd ( B_s+ A_s)\\
    &\quad=\int_0^t \big( p_{t-s}^\theta(x)-p_{t-s}^\theta(y)\big)\Big( \sigma(s,X^1(s,0))-\sigma(s,X^2(s,0))\Big)\dd \tilde{B}_s^{X,K}.
  \end{align*}
  For fixed $t\in[0,T]$ and $x,y\in [-\frac{1}{2},\frac{1}{2}]$, the process
  \begin{equation*}
    S_{\tilde{t}}^{x,y}= \int_0^{\tilde{t}} ( p_{t-s}^\theta(x)-p_{t-s}^\theta(y))(\sigma(s,X^1(s,0))-\sigma(s,X^2(s,0)))\dd \tilde{B}^{X,K}_s,\quad\tilde{t}\in [0,t],
  \end{equation*}
  is a local $\mathbb{Q}^{X,K}$-martingale with quadratic variation
  \begin{align*}
    \langle S^{x,y}\rangle_{\tilde{t}}
    &=\int_0^{\tilde{t}} ( p_{t-s}^\theta(x)-p_{t-s}^\theta(y))^2 (\sigma(s,X^1(s,0))-\sigma(s,X^2(s,0)))^2\dd s\notag\\
    &\leq C_\sigma^2 \int_0^{\tilde{t}} (p_{t-s}^\theta(x)-p_{t-s}^\theta(y))^2|\tilde{X}(s,0)|^{2\xi}\dd s\notag\\
    &=C_\sigma^2 \int_0^{\tilde{t}}D^{x,y,t,t}(s)\dd s.
  \end{align*}

  Thus, working under $\mathbb{Q}^{X,K}$ in \eqref{q1q2q3q4}, we can bound the term~$Q_1$ as follows:
  \begin{align*}
    Q_1&\leq \mathbb{Q}^{X,K}\bigg(| S_t^{x,y} |\geq |x-y|^{\frac{1}{2}-\alpha-\delta}\epsilon^p,\,\int_0^t D^{x,y,t,t}(s)\dd s \leq |x-y|^{1-2\alpha-2\delta'}\epsilon^{2p} \bigg)\\
    &\leq \mathbb{Q}^{X,K}\big( |S_{t}^{x,y}|\geq |x-y|^{\frac{1}{2}-\alpha-\delta}\epsilon^p,\, \langle S^{x,y}\rangle_{t}\leq C_\sigma^2 |x-y|^{1-2\alpha-2\delta'}\epsilon^{2p}\big)
  \end{align*}
  by the definition of $D^{x,y,t,t}$.

  Next, we apply the Dambis--Dubins--Schwarz theorem, which states that the local $\Q^{X,K}$-martingale $S_{\tilde{t}}^{x,y}$ can be embedded into a $\Q^{X,K}$-Brownian motion $(\tilde{W}_{\tilde{t}})_{\tilde{t}\in[0,t]}$ such that $S_{\tilde{t}}^{x,y}=\tilde{W}_{\langle S^{x,y} \rangle_{\tilde{t}}}$ holds for all $\tilde{t}\in [0,t]$. Thus, with $z:=C_\sigma^2 |x-~y|^{1-2\alpha-2\delta'}\epsilon^{2p}$ we obtain
  \begin{align*}
    Q_1
    &\leq \mathbb{Q}^{X,K}\bigg( | \tilde{W}_{\langle S^{x,y}\rangle_t}| \geq |x-y|^{\frac{1}{2}-\alpha-\delta}\epsilon^p, \, \langle S^{x,y}\rangle_{t}\leq z \bigg)\\
    &\leq \mathbb{Q}^{X,K}\bigg(\sup\limits_{0\leq s\leq z}| \tilde{W}_{s}| \geq |x-y|^{\frac{1}{2}-\alpha-\delta}\epsilon^p \bigg),
  \end{align*}
  since from the first event follows always the second one. Thus, with the notation $\tilde{W}^*(t):=\sup\limits_{0\leq s\leq t}|\tilde{W}_{s}|$, the scaling property of Brownian motion and the reflection principle, we get
  \begin{align*}
    Q_1
    &\leq \mathbb{Q}^{X,K}\big( \tilde{W}^*(C_\sigma^2 |x-y|^{1-2\alpha-2\delta'}\epsilon^{2p})\geq  |x-y|^{\frac{1}{2}-\alpha-\delta}\epsilon^p \big)\\
    &= \mathbb{Q}^{X,K}\big( \tilde{W}^*(1)C_\sigma |x-y|^{\frac{1}{2}-\alpha-\delta'}\epsilon^{p} \geq |x-y|^{\frac{1}{2}-\alpha-\delta}\epsilon^p \big)\\
    &= 2\mathbb{Q}^{X,K}\big( \tilde{W}(1) \geq C_\sigma^{-1} |x-y|^{-\delta''} \big)
  \end{align*}
  with $\delta'':=\delta-\delta'>0$ and, applying the concentration inequality $\mathbb{Q}^{X,K}(N>a)\leq e^{-\frac{a^2}{2}}$ for standard normal distributed~$N$, we get
  \begin{equation}\label{eq2}
    Q_1
    \leq 2e^{-\frac{1}{2C_\sigma^2}|x-y|^{-2\delta''}}
    =:ce^{-c'|x-y|^{-2\delta''}},
  \end{equation}
  for some constants $c,c'>0$. With a very similar argumentation, we can use the probability measure~$\Q^{X,K}$ and proceed as above to derive the bound
  \begin{equation*}
    Q_2\leq ce^{-c'|t'-t|^{-2\alpha \delta''}},
  \end{equation*}
  where $c$ and $c'$ are the same constants as in \eqref{eq2}.

  \medskip
  \textit{Step (ii):}
  We want to show that the terms $Q_3$ and $Q_4$ in \eqref{q1q2q3q4} vanish $\P$-a.s., if we choose $N_1$ large enough. Therefore, we consider $(t,x)\in Z_{K,N,\zeta}$ and $(t',y)$ as in \eqref{tundtstrich} and begin by showing the following bound on $|\tilde{X}(s,0)|$ for $s\leq t'$:
  \begin{equation}\label{bound_X(s,0)}
    |\tilde{X}(s,0)| \leq \left\{
    \begin{array}{ll}
    3\epsilon^\zeta & \text{if } s\in [t-\epsilon,t'], \\
    (4+K)2^{\zeta N_\zeta}(t-s)^\zeta & \text{if } s\in [0,t-\epsilon]. \\
    \end{array}\right.
  \end{equation}

  To see \eqref{bound_X(s,0)}, we choose for $(t,x)\in Z_{K,N,\zeta}$ some $(\hat{t},\hat{x})$ as in the definition of $Z_{K,N,\zeta}$ in \eqref{def_Z} such that
  \begin{equation*}
    |t-\hat{t}|\leq \epsilon=2^{-N},\quad
    |x-\hat{x}|\leq \epsilon^\alpha \quad\text{and}\quad
    |\tilde{X}(\hat{t},\hat{x})|\leq 2^{-N\zeta}=\epsilon^\zeta.
  \end{equation*}
  Then, for $s\in [t-\epsilon,t']$, we see that $|t-s|\leq \epsilon$ by \eqref{tundtstrich}. Thus, by \eqref{5.1}, we obtain that
  \begin{align*}
    |\tilde{X}(s,0)|&\leq |\tilde{X}(\hat{t},\hat{x})|+|\tilde{X}(\hat{t},\hat{x})-\tilde{X}(t,x)|+|\tilde{X}(t,x)-\tilde{X}(s,0)|\\
    &\leq 3\cdot2^{-N\zeta}=3\epsilon^\zeta.
  \end{align*}
  For $s\in [t-2^{-N_\zeta},t-\epsilon]$, we can choose some $\tilde{N}\geq N_\zeta$ such that $2^{-(\tilde{N}+1)}\leq t-s\leq 2^{-\tilde{N}}$ due to $t-\epsilon\geq s$, i.e. $t-s\geq 2^{-N}$. Thus, we get
  \begin{align*}
    |\tilde{X}(s,0)|&\leq |\tilde{X}(\hat{t},\hat{x})|+|\tilde{X}(\hat{t},\hat{x})-\tilde{X}(t,x)|+|\tilde{X}(t,x)-\tilde{X}(s,0)|\\
    &\leq 2^{-N\zeta}+2^{-N\zeta}+2^{-\tilde{N}\zeta}\leq 2\cdot (t-s)^\zeta +2^\zeta 2^{-(\tilde{N}+1)\zeta}\\
    &\leq 4(t-s)^\zeta.
  \end{align*}
  Last, for $s\in [0,t-2^{-N_\zeta}]$ with $s\leq T_K$, i.e. $\tilde{X}$ is bounded by $K>0$, and $t-s\geq 2^{-N_\zeta}$, we can bound
  \begin{equation*}
    |\tilde{X}(s,0)|\
    \leq K\leq K(t-s)^{-\zeta}(t-s)^\zeta\\
    \leq K2^{N_\zeta \zeta}(t-s)^\zeta,
  \end{equation*}
  which shows the bound~\eqref{bound_X(s,0)}.

  For $Q_3$, using \eqref{bound_X(s,0)} and the definition of $D^{x,y,t,t'}$ in \eqref{def_Dxytt}, we can bound the term inside~$Q_3$ by
  \begin{align}
    \int_0^t D^{x,y,t,t}(s)\dd s
    &\leq 3^{2\xi}\int_{t-\epsilon}^t ( p_{t-s}(x)-p_{t-s}(y))^2  \epsilon^{2\zeta\xi}\dd s\notag\\
    &\quad+ (4+K)^{2\xi}2^{2\xi\zeta N_\zeta}\int_0^{t-\epsilon}( p_{t-s}(x)-p_{t-s}(y))^2 (t-s)^{2\zeta\xi}\dd s\notag\\
    &=:D_1(t)+D_2(t).\label{D1+D2}
  \end{align} 
  Now, by Lemma~\ref{lem:6.3} with $\beta=\frac{1}{2}-\alpha-\delta'$ and $\max(|x|,|y|)\leq 2\epsilon^\alpha$, we can bound
  \begin{align}
    D_1(t)
    &\lesssim \epsilon^{2\zeta\xi}|x-y|^{1-2\alpha}\max(|x|,|y|)^{(\frac{1}{\alpha}-1)2\beta}\notag\\
    &\lesssim \epsilon^{2\zeta\xi+2\delta'}|x-y|^{1-2\alpha-2\delta'}\epsilon^{(1-\alpha)2\beta}\notag\\
    &=\epsilon^{2(\frac{1}{2}-\alpha(\frac{3}{2}-\alpha)+\alpha\delta'+\xi\zeta)}|x-y|^{1-2\alpha-2\delta'}\notag\\
    &\leq \epsilon^{2\hat{p}}|x-y|^{1-2\alpha-2\delta'}\label{estimateD1}
  \end{align}
  by the definition of $\hat{p}$ in \eqref{p_hut}. For $D_2(t)$, we use Lemma~\ref{lem:6.1} with $\beta=1$ to bound
  \begin{align}
    D_2(t)&\lesssim 2^{2\xi\zeta N_\zeta}\int_0^{t-\epsilon} |x-y|^2 (t-s)^{2\zeta\xi-2\alpha-2}\epsilon^{2(1-\alpha)} \dd s\notag\\
    &= 2^{2\xi\zeta N_\zeta}|x-y|^{1-2\alpha-2\delta'}|x-y|^{1+2\alpha+2\delta'} \epsilon^{2(1-\alpha)} \bigg[ \frac{(t-s)^{-2\alpha-1+2\xi\zeta}}{-2\alpha-1+2\xi\zeta} \bigg]_0^{t-\epsilon}\notag\\
    &\lesssim 2^{2\xi\zeta N_\zeta}|x-y|^{1-2\alpha-2\delta'}\epsilon^{\alpha (1+2\alpha+2\delta')} \epsilon^{2(1-\alpha)} \epsilon^{((-2\alpha-1+2\xi\zeta)\wedge 0)-2\alpha\delta_2}\notag\\
    &= 2^{2\xi\zeta N_\zeta}|x-y|^{1-2\alpha-2\delta'}\epsilon^{2\hat{p}}.\label{estimateD2}
  \end{align}
  Hence, by inserting \eqref{estimateD1} and \eqref{estimateD2} into \eqref{D1+D2}, we obtain
  \begin{equation}\label{bound_Q3}
    \int_0^t D^{x,y,t,t}(s)\dd s\lesssim  2^{2\xi\zeta N_\zeta}|x-y|^{1-2\alpha-2\delta'}\epsilon^{2\hat{p}}.
  \end{equation}

  For $Q_4$, we can use \eqref{bound_X(s,0)} to bound the first summand in the definition of $Q_4$ by
  \begin{align}
    \int_t^{t'}D^{x,t'}(s)\dd s
    &=\int_{t}^{t'}p_{t'-s}(x)^2|\tilde{X}(s,0)|^{2\xi}\dd s\notag\\
    &\lesssim \int_{t}^{t'}(t'-s)^{-2\alpha}\epsilon^{2\zeta\xi}\dd s\notag\\
    &\lesssim \epsilon^{2\zeta\xi}|t'-t|^{1-2\alpha}\notag\\
    &\lesssim \epsilon^{2\xi\zeta}\epsilon^{2(\frac{1}{2}-\alpha-\alpha(\frac{1}{2}-\alpha)+\alpha\delta')}|t'-t|^{2\alpha(\frac{1}{2}-\alpha-\delta')}\notag\\
    &\lesssim \epsilon^{2\hat{p}}|t'-t|^{2\alpha(\frac{1}{2}-\alpha-\delta')},\label{bound_Dxt}
  \end{align}
  where we used that $|t-t'|\leq \epsilon$ and $\hat{p}<\frac{1}{2}-\alpha-\alpha(\frac{1}{2}-\alpha)+\alpha\delta'$.
  We split the second summand similar as before:
  \begin{equation}\label{D3+D4}
    \int_0^t D^{x,x,t,t'}(s)\dd s=\int_{t-\epsilon}^t D^{x,x,t,t'}(s)\dd s+\int_0^{t-\epsilon} D^{x,x,t,t'}(s)\dd s=:D_3(t)+D_4(t).
  \end{equation}
  By Lemma~\ref{lem:6.2}, we estimate
  \begin{align}
    D_3(t)&=\int_{t-\epsilon}^t|p_{t-s}(x)-p_{t'-s}(x)|^2|\tilde{X}(s,0)|^{2\xi}\dd s\notag\\
    &\lesssim \epsilon^{2\xi\zeta}|t'-t|^{1-2\alpha}\notag\\
    &\lesssim \epsilon^{2\hat{p}}|t'-t|^{2\alpha(\frac{1}{2}-\alpha-\delta')},\label{estimateD3}
  \end{align}
  where the last estimate follows as in \eqref{bound_Dxt}.

  For $D_4(t)$, using the inequality $(a+b)^2\leq 2(a^2+b^2)$, we obtain
  \begin{align}\label{D41+D42}
    D_4(t)&=\int_0^{t-\epsilon}|p_{t-s}(x)-p_{t'-s}(x)|^2|\tilde{X}(s,0)|^{2\xi}\dd s\notag\\
    &\leq 2(4+K)^{2\xi}2^{2\xi\zeta N_\zeta}\int_0^{t-\epsilon} \bigg| ( (t-s)^{-\alpha}-(t'-s)^{-\alpha})e^{-\frac{|x|^{1/\alpha}}{t-s}} \bigg |^2(t-s)^{2\xi\zeta}\dd s\notag\\
    &\quad+2(4+K)^{2\xi}2^{2\xi\zeta N_\zeta}\int_0^{t-\epsilon}\bigg|  (t'-s)^{-\alpha} \bigg( e^{-\frac{|x|^{1/\alpha}}{t-s}}-e^{-\frac{|x|^{1/\alpha}}{t'-s}}\bigg) \bigg|^2(t-s)^{2\xi\zeta}\dd s\notag\\
    &=:D_{4,1}+D_{4,2}.
  \end{align}
  For $D_{4,1}$, we use the inequality 
  \begin{equation}\label{inequality_ue}
    ((t-s)^{-\alpha}-(t'-s)^{-\alpha})e^{-\frac{|x|^{1/\alpha}}{t-s}}\leq (t-s)^{-\alpha-1}(t'-t).
  \end{equation}
  To see this, note that
  \begin{equation*}
    e^{-\frac{|x|^{1/\alpha}}{t-s}}
    \leq \bigg( \frac{t-s}{t'-s} \bigg)^\alpha e^{-\frac{|x|^{1/\alpha}}{t-s}} +\frac{t'-t}{t-s},
  \end{equation*}
  which holds since
  \begin{align}\label{holdsasin}
    \bigg(\frac{t-s}{t'-s} \bigg)^\alpha +\frac{t'-t}{t-s}
    &\geq  \frac{t-s}{t'-s}  +\frac{t'-t}{t-s}\notag\\
    &= \frac{t-s}{t'-s} +\frac{t'-s}{t-s}-1\geq 1
  \end{align}
  as $x\mapsto \frac{1}{x} +x\geq 2$ on $[0,1]$. Thus, using \eqref{inequality_ue}, we get
  \begin{align}
    D_{4,1}&\lesssim 2^{2\xi\zeta N_\zeta}\int_0^{t-\epsilon}(t-s)^{-2\alpha-2}(t'-t)^2(t-s)^{2\xi\zeta}\dd s\notag\\
    &\lesssim 2^{2\xi\zeta N_\zeta}(t'-t)^2 \epsilon^{((-2\alpha-1+\xi\zeta)\wedge 0)-2\alpha\delta_2}\notag\\
    &\lesssim 2^{2\xi\zeta N_\zeta}(t'-t)^{2\alpha(\frac{1}{2}-\alpha-\delta')}\epsilon^{2-2\alpha(\frac{1}{2}-\alpha-\delta')} \epsilon^{((-2\alpha-1+\xi\zeta)\wedge 0)-2\alpha\delta_2}\notag\\
    &= 2^{2\xi\zeta N_\zeta}(t'-t)^{\alpha(1-2\alpha-2\delta')} \epsilon^{2((-\alpha+\frac{1}{2}+\xi\zeta)\wedge 1)-\alpha\delta_2-\alpha(\frac{1}{2}-\alpha-\delta')}\notag\\
    &= 2^{2\xi\zeta N_\zeta}(t'-t)^{\alpha(1-2\alpha-2\delta')}\epsilon^{2\hat{p}}.\label{estimateD41}
  \end{align}
  For $D_{4,2}$, we use the inequality $|e^{-a}-e^{-b}|\leq |a-b|$ and then the bound $\frac{1}{t-s}-\frac{1}{t'-s}\leq \frac{t'-t}{(t-s)^2}$, which holds as in \eqref{holdsasin}, to get
  \begin{align}
    D_{4,2}&\lesssim 2^{2\xi\zeta N_\zeta}\int_0^{t-\epsilon}(t'-s)^{-2\alpha}\bigg| \frac{|x|^{1/\alpha}}{t-s}-\frac{|x|^{1/\alpha}}{t'-s} \bigg|^2 (t-s)^{2\xi\zeta}\dd s\notag\\
    &\lesssim 2^{2\xi\zeta N_\zeta}|x|^{2/\alpha}\int_0^{t-\epsilon}(t'-s)^{-2\alpha}(t-s)^{-4}(t'-t)^2 (t-s)^{2\xi\zeta}\dd s\notag\\
    &\lesssim 2^{2\xi\zeta N_\zeta}|x|^{2/\alpha}\epsilon^{-3-2\alpha+2\xi\zeta} (t'-t)^2\notag\\
    &\lesssim 2^{2\xi\zeta N_\zeta}|x|^{2/\alpha}\epsilon^{-3-2\alpha+2\xi\zeta} (t'-t)^{2\alpha(\frac{1}{2}-\alpha-\delta')}\epsilon^{2-2\alpha(\frac{1}{2}-\alpha-\delta')}\notag\\
    &=  2^{2\xi\zeta N_\zeta}|x|^{2/\alpha} \epsilon^{2(\frac{1}{2}-\alpha+\xi\zeta-\alpha(\frac{1}{2}-\alpha)+\alpha\delta')}(t'-t)^{\alpha(1-2\alpha-2\delta')}\notag\\
    &=  2^{2\xi\zeta N_\zeta}|x|^{2/\alpha} \epsilon^{2\hat{p}}(t'-t)^{\alpha(1-2\alpha-2\delta')}.\label{estimateD42}
  \end{align}
  Hence, \eqref{bound_Dxt} and plugging \eqref{estimateD3}, \eqref{D41+D42}, \eqref{estimateD41} and \eqref{estimateD42} into \eqref{D3+D4}, we obtain
  \begin{equation}\label{bound_Q4}
    \int_t^{t'}D^{x,t'}(s)\dd s + \int_0^t D^{x,x,t,t'}(s)\dd s\lesssim 2^{2\xi\zeta N_\zeta} |t'-t|^{\alpha(1-2\alpha-2\delta')}\epsilon^{2\hat{p}}.
  \end{equation}

  Combining \eqref{bound_Q3} and \eqref{bound_Q4}, we can denote $C>0$ to be the maximum of the two generic constants occuring in the estimates, to conclude, that if we can secure that
  \begin{equation}
    C2^{2\xi\zeta N_\zeta}\epsilon^{2\hat{p}} < \epsilon^{2p},\label{condition}
  \end{equation}
  then the conditions inside of $Q_3$ and $Q_4$ are never fulfilled and, thus, we get that $Q_3=Q_4=0$. By $\epsilon=2^{-N}$, \eqref{condition} is equivalent to
  \begin{equation*}
    C<2^{2N(\hat{p}-p)-2N_\zeta \xi\zeta},
  \end{equation*}
  and, since $\hat{p}-p>0$, fulfilled for all
  \begin{equation*}
    N>\frac{2\xi \zeta N_\zeta + log_2(C)}{2(\hat{p}-p)}.
  \end{equation*}
  Therefore, we can find a deterministic constant $c_{K,\zeta,\delta,\delta_1,\delta',\delta_2}$ such that, for all
  \begin{equation}\label{N1}
    N\geq N_1(\omega):= c_{K,\zeta,\delta,\delta_1,\delta',\delta_2}N_\zeta(\omega),
  \end{equation}
  $Q_3=Q_4=0$ holds.

  \medskip

  \textit{Step (iii):}
  We discretize $\tilde{X}(t,y)$ for $t\in [0,T_K]$ and $y\in [-\frac{1}{2},\frac{1}{2}]$ as follows:
  \begin{align*}
    M_{n,N,K}:=\max \Big\{ &\Big| \tilde{X}(j2^{-n},(z+1)2^{-\alpha n}) - \tilde{X}(j2^{-n},z2^{-\alpha n})\Big|\\
    &\quad+\Big| \tilde{X}((j+1)2^{-n},z2^{-\alpha n}) - \tilde{X}(j2^{-n},z2^{-\alpha n}) \Big|:\\
    &\quad|z|\leq 2^{\alpha n-1},(j+1)2^{-n}\leq T_K,j\in \mathbb{Z}_+,z\in \mathbb{Z}, \\
    &\quad(j2^{-n},z2^{-\alpha n})\in Z_{K,N,\zeta} \Big\}.
  \end{align*}
  Moreover, we define the event
  \begin{equation*}
    A_N:=\big\lbrace \omega\in\Omega\,:\, \text{for some }n\geq N,\, M_{n,N,K}\geq 2^{-n\alpha(\frac{1}{2}-\alpha-\delta)}2^{-Np},\,N\geq N_1 \big\rbrace.
  \end{equation*}
  Then, we get, by using \eqref{q1q2q3q4}, Step (i) and Step (ii), that for all $N\geq N_1$ as in \eqref{N1}:
  \begin{align*}
    \mathbb{Q}^{X,K}\bigg( \bigcup\limits_{N'\geq N}A_{N'}\bigg)
    &\leq \sum\limits_{N'=N}^{\infty}\sum\limits_{n=N'}^{\infty}\mathbb{Q}^{X,K} (M_{n,N',K}\geq  2\cdot 2^{-n\alpha(\frac{1}{2}-\alpha-\delta)}2^{-Np})\\
    &\lesssim \sum_{N'=N}^\infty \sum_{n=N'}^\infty 2^{(\alpha+1)n}e^{-c'2^{n\delta''\alpha}},
  \end{align*}
  since the total number of partition elements in each $M_{n,N,K}$ is at most $2~\cdot ~2^{\alpha n-1}\cdot ~K\cdot 2^n \lesssim K2^{(\alpha+1)n}$ (if $T_K=T$). Furthermore, we used that $|t-\hat{t}|\leq 2^{-n}$ and $|x-\hat{x}|\leq 2^{-n\alpha}$, which follows by the construction of $M_{n,N,K}$.

  We use the convexity $2^{x+y}\geq 2^x+2^y$ for $x,y\geq 0$ to estimate
  \begin{align*}
    \mathbb{Q}^{X,K}\bigg(\bigcup\limits_{N'\geq N}A_{N'}\bigg)
    &\lesssim \sum\limits_{N'=N}^{\infty}\sum\limits_{n=0}^{\infty} 2^{(\alpha+1)(n+N')}e^{-c'2^{(n+N')\delta''\alpha}}\notag\\
    &\leq \sum\limits_{N'=N}^{\infty}2^{(\alpha+1)N'}\sum\limits_{n=0}^{\infty} 2^{(\alpha+1)n}e^{-c'(2^{n\delta''\alpha}+2^{N'\delta''\alpha})}\notag\\
    &= \sum\limits_{N'=N}^{\infty}2^{(\alpha+1)N'}e^{-c'2^{N'\delta''\alpha}}\sum\limits_{n=0}^{\infty} 2^{(\alpha+1)n}e^{-c'2^{n\delta''\alpha}}\notag\\
    &= 2^{(\alpha+1)N}e^{-c'2^{N\delta''\alpha}}\sum\limits_{N'=0}^{\infty}2^{(\alpha+1)N'}e^{-c'2^{N'\delta''\alpha}}\sum\limits_{n=0}^{\infty} 2^{(\alpha+1)n}e^{-c'2^{n\delta''\alpha}}\notag\\
    &\lesssim e^{(\alpha+1)N}e^{-c'2^{N\delta''\alpha}}\\
    &\lesssim e^{-c_22^{N\delta''\alpha}},
  \end{align*}
  for some constant $c_2>0$, where we used convergence and thus finiteness of the two series in the fourth line by applying the ratio test
  \begin{equation*}
    \lim\limits_{n\to\infty}\Big| 2^{\alpha+1}e^{-c'(2^{(n+1)\delta^{\prime\prime}\alpha}-2^{n\delta^{\prime\prime}\alpha})}\Big|=0.
  \end{equation*}
  Therefore, we get for
  \begin{equation*}
    N_2(\omega):=\min\lbrace N\in\mathbb{N} \,:\,\omega\in A_{N'}^c\,\forall N'\geq N \rbrace,
  \end{equation*}
  where the superscript $c$ denotes the complement of a set, that
  \begin{equation}\label{P_N2}
    \mathbb{Q}^{X,K}(N_2>N)=\mathbb{Q}^{X,K}\bigg(\bigcup\limits_{N'\geq N}A_{N'}\bigg)\lesssim e^{-c_22^{N\delta''\alpha}},
  \end{equation}
  and thus $N_2<\infty$ $\Q^{X,K}$-a.s.

  We fix some $m\in\N$ with $m>3/\alpha$ and choose $N(\omega)\geq (N_2(\omega)+m)\wedge (N_1+m)$, which is finite a.s., such that holds:
  \begin{equation}\label{7}
    \forall n\geq N: M_{n,N,K}< 2^{-n\alpha(\frac{1}{2}-\alpha-\delta)}2^{-Np}\quad \text{a.s.}
  \end{equation}
  and $Q_3=Q_4=0$.

  Furthermore, we choose $(t,x)\in Z_{K,N,\zeta}$ and $(t',y)$ such that
  \begin{align*}
    d((t',y),(t,x)):=|t'-t|^\alpha+|y-x|\leq 2^{-N\alpha},
  \end{align*}
  and we choose points near $(t,x)$ as follows: for $n\geq N$, we denote by $t_n\in 2^{-n}\mathbb{Z}_+$ and $x_n\in 2^{-\alpha n}\mathbb{Z}$ for the unique points such that
  \begin{align*}
    &t_n\leq t<t_n+2^{-n},\\
    &x_n\leq x <x_n+2^{-\alpha n} \text{ for }x\geq 0
    \quad\text{or}\quad
    x_n-2^{-\alpha n}< x \leq x_n \text{ for }x< 0.
  \end{align*}
  We define $t_n',y_n$ analogously. Let $(\hat{t},\hat{x})$ be the points from the definition of $Z_{K,N,\zeta}$ with $|\tilde{X}(\hat{t},\hat{x})|\leq 2^{-N\zeta}$. Then, for $n\geq N$, we observe that
  \begin{align}\label{9}
    d((t_n',y_n),(\hat{t},\hat{x}))&\leq d((t_n',y_n),(t',y)) + d((t',y),(t,x)) + d((t,x),(\hat{t},\hat{x}))\notag\\
    &\leq |t_n'-t|^\alpha +|y-y_n|+2^{-N\alpha}+2\cdot 2^{-N\alpha}\notag\\
    &\leq 6\cdot 2^{-N\alpha}<2^{3-N\alpha}=2^{-\alpha(N-\frac{3}{\alpha})}\notag\\
    &<2^{-\alpha(N-m)},
  \end{align}
  which implies $(t_n',y_n)\in Z_{K,N-m,\zeta}$. We use that to finally formulate our bound. We also use the continuity of $\tilde{X}$ and our construction of the $t_n,x_n$ to get that
  \begin{equation*}
    \lim\limits_{n\to\infty}\tilde{X}(t_n,x_n)=\tilde{X}(t,x)\quad \text{a.s.}
  \end{equation*}
  and the same for $t_n',y_n$. Thus, by the triangle inequality:
  \begin{align*}
    |\tilde{X}(t,x)-\tilde{X}(t',y)|
    &=\bigg| \sum\limits_{n=N}^\infty \bigg( (\tilde{X}(t_{n+1},x_{n+1})-\tilde{X}(t_n,x_n)) + ( \tilde{X}(t'_{n},y_{n})-\tilde{X}(t'_{n+1},y_{n+1}) ) \bigg)\\
    &\quad+\tilde{X}(t_N,x_N)-\tilde{X}(t_N',y_N) \bigg|\\
    &\leq  \sum\limits_{n=N}^\infty | \tilde{X}(t_{n+1},x_{n+1})-\tilde{X}(t_n,x_n)| + | \tilde{X}(t'_{n},y_{n})-\tilde{X}(t'_{n+1},y_{n+1}) |\\
    &\quad+ |\tilde{X}(t_N,x_N)-\tilde{X}(t_N',y_N)|.
  \end{align*}
  Since we choose $t_n,x_n$ and $t'_n,y_n$ to be of the form of the discrete points in $M_{n,N,K}$ and, since we have~\eqref{9}, we can continue to estimate
  \begin{equation*}
    | \tilde{X}(t,x)-\tilde{X}(t',y) |
    \leq \sum\limits_{n=N}^\infty 2M_{n+1,N-m,K}+ |\tilde{X}(t_N,x_N)-\tilde{X}(t_N',y_N)|.
  \end{equation*}
  Because of $|t-t'|\leq 2^{-N}$ and our construction of $t_N,t'_N$, they must be equal or adjacent in $2^{-N}\mathbb{Z}_+$ and analogue for $x_N,y_N$. Thus, we get
  \begin{align*}
    | \tilde{X}(t,x)-\tilde{X}(t',y) |
    &\leq \sum\limits_{n=N}^\infty 2M_{n+1,N-m,K} + M_{N,N-m,K}\\
    &\leq 2\sum\limits_{n=N}^\infty M_{n,N-m,K}\\
    &\lesssim\sum\limits_{n=N}^\infty  2^{-n\alpha (\frac{1}{2}-\alpha-\delta)}2^{-(N-m)p}\\
    &= 2^{-(N-m)p} \sum\limits_{n=0}^\infty 2^{-(n+N)\alpha (\frac{1}{2}-\alpha-\delta)}\\
    &\lesssim 2^{mp}2^{-N(\alpha(\frac{1}{2}-\alpha-\delta)+p)} \\
    &< 2^{-N\zeta_1},
  \end{align*}
  where the last line follows with $\alpha(\frac{1}{2}-\alpha-\delta)+p>\zeta_1$, which holds by \eqref{def:delta1} and \eqref{def:p}, and for all
  \begin{equation}\label{N3}
    N\geq N_3
  \end{equation}
  for some $N_3$ that is large enough such that $2^{mp}$ is dominated and thus depends deterministically on~$p$. Therefore, we have proven Theorem~\ref{thm:thm1} with
  \begin{equation*}
    N_{\zeta_1}(\omega):=\max \lbrace N_2(\omega)+m,N_{\zeta}(\omega)+m,c_{K,\zeta,\delta,\delta_1,\delta',\delta_2}N_{\zeta}(\omega)+m,N_3 \rbrace
  \end{equation*}
  by $N_{\zeta_1}$ chosen in that way due to \eqref{7}, Step (ii), \eqref{N1} and \eqref{N3}. If we denote $R':=1\vee c_{K,\zeta,\delta,\delta_1,\delta',\delta_2}$ and consider some $N\geq 2m\vee N_3$, \eqref{P_N2} implies
  \begin{align*}
    \mathbb{Q}^{X,K}(N_{\zeta_1}\geq N) &\leq \mathbb{Q}^{X,K}(N_{2}\geq N-m)+2\mathbb{Q}^{X,K}\bigg(N_{\zeta}\geq \frac{N-m}{R'}\bigg)\\
    &\leq CKe^{-c_22^{(N-m)\delta''\alpha}}+2\mathbb{Q}^{X,K}(N_\zeta \geq N/R)
  \end{align*}
  for $R=2R'$ and $C>0$ not depending on $K$, which shows the probability bound in \eqref{thm_probbound} by re-defining $\delta:=\delta''\alpha>0$ and thus completes the proof.
\end{proof}

In the following we sometimes only write a.s. when we mean $\P$-a.s. Since $\mathbb{Q}^{X,K}\ll \P$, this implies $\mathbb{Q}^{X,K}$-a.s.

\begin{corollary}\label{cor:Cor5}
  With the hypotheses of Theorem~\ref{thm:thm1} and $\frac{1}{2}-\alpha <\zeta < \frac{\frac{1}{2}-\alpha}{1-\xi}\wedge 1$, there is an a.s. finite positive random variable $C_{\zeta,K}(\omega)$ such that, for any $\epsilon\in (0,1]$, $t\in [0,T_K]$ and $|x|<\epsilon^\alpha$, if $|\tilde{X}(t,\hat{x})|\leq \epsilon^\zeta$ for some $|\hat{x}-x|\leq \epsilon^\alpha$, then
  \begin{equation}\label{cor_ineq}
    |\tilde{X}(t,y)|\leq C_{\zeta,K}\epsilon^\zeta,
  \end{equation}
  whenever $|x-y|\leq \epsilon^\alpha$.

  Moreover, there are constants $\delta, C_1, c_2,\tilde{R}>0$, depending on $\zeta$ (but not on $K$), and $r_0(K)>0$ such that
  \begin{equation}\label{cor_prob_bound}
    \mathbb{Q}^{X,K}(C_{\zeta,K}\geq r)
    \leq C_1 \bigg[ \Q^{X,K}\bigg( N_{\frac{\alpha}{2}(\frac{1}{2}-\alpha)}\geq \frac{1}{\tilde{R}}\log_2\bigg(\frac{r-6}{K+1}\bigg) \bigg) +K e^{ -c_2\big( \frac{r-6}{K+1} \big)^\delta } \bigg]
  \end{equation}
  for all $r\geq r_0(K)> 6+(K+1)$, where $\Q^{X,K}$ is the probability measure from Theorem~\ref{thm:thm1}.
\end{corollary}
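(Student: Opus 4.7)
The plan is to iterate Theorem~\ref{thm:thm1} finitely many times. Since $\xi\in[\tfrac12,1]$, the map $\zeta\mapsto(\zeta\xi+\tfrac12-\alpha)\wedge1$ has the unique attracting fixed point $\frac{1/2-\alpha}{1-\xi}\wedge1$, and from any strictly positive starting value the iterates increase to this limit. Because the target exponent satisfies $\zeta<\frac{1/2-\alpha}{1-\xi}\wedge1$, I would fix once and for all a finite chain $\zeta_{0}:=\tfrac{\alpha}{2}(\tfrac12-\alpha)<\zeta_{1}<\cdots<\zeta_{k}=\zeta$ with $\tfrac12-\alpha<\zeta_{j+1}<(\zeta_{j}\xi+\tfrac12-\alpha)\wedge1$ for every $j\geq1$, so that Theorem~\ref{thm:thm1} can be applied at each step. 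The first step only needs $\zeta_{0}>0$ in order that $\zeta_{0}\xi+\tfrac12-\alpha>\tfrac12-\alpha$, which is automatic.

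For the base case \eqref{5.1} at level $\zeta_{0}$, I would combine Proposition~\ref{prop: regularity result}(ii) with Kolmogorov's continuity criterion: the moment bound $\E[|\tilde X(t,x)-\tilde X(t',y)|^{q}]\lesssim |t-t'|^{q(1/2-\alpha)}+|x-y|^{q(1/2-\alpha)}$ for every $q\geq2$ yields a.s.\ Euclidean-$\gamma$-H\"older continuity of $\tilde X$ for every $\gamma<\tfrac12-\alpha$. In the parabolic scaling of \eqref{5.1}, $|t-t'|\leq 2^{-N}$ and $|y-x|\leq2^{-N\alpha}$ then give $|\tilde X(t,x)-\tilde X(t',y)|\leq C_{\omega}\,2^{-N\alpha\gamma}$, and since $\zeta_{0}<\alpha(\tfrac12-\alpha)$ one can pick $\gamma<\tfrac12-\alpha$ with $\alpha\gamma>\zeta_{0}$, producing an a.s.\ finite $N_{\zeta_{0}}(K,\omega)$ for which \eqref{5.1} holds. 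Applying Theorem~\ref{thm:thm1} along the chain then delivers the a.s.\ finite thresholds $N_{\zeta_{1}},\ldots,N_{\zeta_{k}}=N_{\zeta}$.

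To move from the dyadic conclusion \eqref{5.2} to the continuous bound \eqref{cor_ineq}, I would, given $\epsilon\in(0,1]$, fix the integer $N$ with $2^{-N-1}<\epsilon\leq 2^{-N}$. When $N$ exceeds $N_{\zeta}$ by a deterministic additive constant $c_{0}$ (absorbing factors like $2^{\alpha},2^{\zeta}$), the hypothesis $|\tilde X(t,\hat x)|\leq\epsilon^{\zeta}$ with $|\hat x-x|\leq\epsilon^{\alpha}$ and $|x|\leq \epsilon^{\alpha}$ places $(t,x)$ into $Z_{K,N-c_{0},\zeta}$; two applications of \eqref{5.2} (first to $(t,\hat x),(t,x)$, then to $(t,x),(t,y)$) combined with the triangle inequality give $|\tilde X(t,y)|\leq c\epsilon^{\zeta}$ for a deterministic $c$. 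In the complementary regime $N<N_{\zeta}+c_{0}$ the pathwise bound $|\tilde X|\leq 2K$ on $[0,T_{K}]$ together with $\epsilon^{\zeta}\geq 2^{-(N_{\zeta}+c_{0})\zeta}$ yields $|\tilde X(t,y)|\lesssim K\,2^{N_{\zeta}\zeta}\epsilon^{\zeta}$, so the choice $C_{\zeta,K}(\omega):=c(1+K\,2^{N_{\zeta}(\omega)\zeta})$ proves \eqref{cor_ineq}.

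For the probability bound I would iterate \eqref{thm_probbound} along the chain: writing $R_{j},\delta_{j}$ for the constants from Theorem~\ref{thm:thm1} at step $j$, a telescoping argument produces $\Q^{X,K}(N_{\zeta}\geq N)\leq C'\bigl[\Q^{X,K}(N_{\zeta_{0}}\geq N/\tilde R)+Ke^{-c_{2}2^{N\delta}}\bigr]$ with $\tilde R=R_{1}\cdots R_{k}$ and $\delta=\min_{j}\delta_{j}$, valid for $N$ past a $K$-dependent threshold. Since $\{C_{\zeta,K}\geq r\}\subset\{N_{\zeta}\geq\tfrac1{\zeta}\log_{2}\tfrac{r-6}{K+1}\}$ up to the deterministic offset, inserting this into the displayed inequality gives \eqref{cor_prob_bound} after absorbing the factor $1/\zeta$ into $\tilde R$. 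The main obstacle, where the bookkeeping demands care, is to ensure that the constants produced by the iterated application of Theorem~\ref{thm:thm1} remain uniform in $K$ (so that only the explicit $Ke^{-c_{2}2^{N\delta}}$ term and the base-level threshold $N_{\zeta_{0}}$ carry any $K$-dependence), and that the dyadic-to-continuous translation in the last step does not degrade the resulting probability bound below the stated form.
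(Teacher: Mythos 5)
Your proposal follows the same route as the paper: it bootstraps Theorem~\ref{thm:thm1} along a finite increasing chain of H\"older exponents starting from $\zeta_0=\tfrac{\alpha}{2}(\tfrac12-\alpha)$ (established via Proposition~\ref{prop: regularity result}(ii) and Kolmogorov), passes from the dyadic conclusion~\eqref{5.2} to the continuous bound~\eqref{cor_ineq} by splitting on whether $\epsilon\leq 2^{-N_\zeta}$ or not, and iterates the tail bound~\eqref{thm_probbound} along the chain to get~\eqref{cor_prob_bound}. The only differences are cosmetic (the paper uses the explicit recursion $\zeta_{n+1}=[(\zeta_n\xi+\tfrac12-\alpha)\wedge1](1-\tfrac1{n+d})$ rather than an abstract chain, and collects the exponential error terms by taking the slowest-decaying one rather than $\min_j\delta_j$), so the proposal is correct and essentially identical to the paper's proof.
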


\begin{proof}
  We will derive the statement by an appropriate induction. We start by choosing 
  \begin{equation*}
    \zeta_0:=\frac{\alpha}{2}\bigg(\frac{1}{2}-\alpha\bigg),
  \end{equation*}
  to be able to use the regularity result from Proposition~\ref{prop: regularity result}. Indeed, by \ref{prop: regularity result}~(ii) we get the inequality \eqref{5.1} with $\zeta_0$ by Kolmogorov's continuity theorem.
  
  Now, we define 
  \begin{equation*}
    \zeta_{n+1}:=\bigg[ \bigg( \zeta_n \xi+\frac{1}{2}-\alpha \bigg) \wedge 1\bigg]\bigg(1-\frac{1}{n+d}\bigg)
  \end{equation*}
  for some $d\in\R$. We chose that $d$ given $\zeta_0$ big enough such that $\zeta_1>\frac{1}{2}-\alpha$. Moreover, it is clearly $\zeta_{n+1}>\zeta_n$. Thus, we get inductively that $\zeta_n\uparrow \frac{\frac{1}{2}-\alpha}{1-\xi}\wedge 1$ and, for every fixed $\zeta\in \Big( \frac{1}{2}-\alpha,\frac{\frac{1}{2}-\alpha}{1-\xi}\wedge 1 \Big)$ as in the statement, we can find $n_0\in\mathbb{N}$ such that $\zeta_{n_0}\geq \zeta>\zeta_{n_0-1}$. By applying Theorem~\ref{thm:thm1} $n_0$-times, we get \eqref{5.1} for $\zeta_{n_0-1}$ and, hence, \eqref{5.2} for $\zeta_{n_0}$.

  We derive the estimation \eqref{cor_ineq} for all $0<\epsilon\leq 1$. Therefore, we consider first $\epsilon\leq 2^{-N_{\zeta_{n_0}}}$, where we got $N_{\zeta_{n_0}}$ from the application of Theorem~\ref{thm:thm1} to $\zeta_{n_0-1}$. Further, we choose $N\in\mathbb{N}$ such that $2^{-N-1}<\epsilon\leq 2^{-N}$ and, thus, $N\geq N_{\zeta_{n_0}}$. Also, we choose $t\leq T_K$ and $|x|\leq \epsilon^\alpha\leq 2^{-N\alpha}$ such that, by assumption of Theorem~\ref{thm:thm1}, for some $|\hat{x}-x|\leq \epsilon^\alpha\leq 2^{-N\alpha}$,
  \begin{equation*}
    |\tilde{X}(t,\hat{x})|\leq \epsilon^\zeta\leq 2^{-N\zeta}\leq 2^{-N\zeta_{n_0-1}}.
  \end{equation*}
  Hence, $(t,x)\in Z_{K,N,\zeta_{n_0-1}}$. For any $y$ such that $|y-x|\leq \epsilon^\alpha$, we get, by \eqref{5.2},
  \begin{align*}
    |\tilde{X}(t,y)|&\leq |\tilde{X}(t,\hat{x})|+|\tilde{X}(t,\hat{x})-\tilde{X}(t,x)|+|\tilde{X}(t,x)-\tilde{X}(t,y)|\\
    &\leq 2^{-N\zeta}+2^{-N\zeta_{n_0}}+2^{-N\zeta_{n_0}}\leq 3\cdot 2^{-N\zeta}\leq 6\epsilon^\zeta.
  \end{align*}
  Now, we consider $\epsilon\in (2^{-N_{\zeta_{n_0}}},1]$. Then, for $(t,x)$ and $(t,y)$ as in the assumption, we get
  \begin{align*}
    |\tilde{X}(t,y)|&\leq |\tilde{X}(t,x)|+|\tilde{X}(t,y)-\tilde{X}(t,x)|\\
    &\leq K+2^{-N\zeta}\leq (K+1)2^{N_{\zeta_{n_0}}\zeta}\epsilon^\zeta
  \end{align*}
  by $\epsilon2^{N_{\zeta_{n_0}}}>1$ and, therefore, we have shown~\eqref{cor_ineq} with $C_{\zeta,K}=(K+1)2^{N_{\zeta_{n_0}}\zeta}+6$.

  It remains to show the estimate~\eqref{cor_prob_bound}. Therefore, we use \eqref{thm_probbound} to conclude that
  \begin{align*}
    \mathbb{Q}^{X,K}\bigg( C_{\zeta,K}\geq r \bigg)
    &=\mathbb{Q}^{X,K}\bigg( 2^{N_{\zeta_{n_0}}\zeta}\geq \frac{r-6}{K+1}\bigg)
    =\mathbb{Q}^{X,K}\bigg( N_{\zeta_{n_0}}\geq \frac{1}{\zeta} \log_2\bigg(\frac{r-6}{K+1}\bigg)\bigg)\\
    &\leq C\bigg(\mathbb{Q}^{X,K}\bigg( N_{\zeta_{n_0-1}}\geq \frac{1}{R\zeta } \log_2\bigg(\frac{r-6}{K+1} \bigg)\bigg)+ K \textup{exp}\bigg(-c_22^{\frac{\delta}{\zeta}\log_2\big(\frac{r-6}{K+1} \big)}\bigg) \bigg).
  \end{align*}
  Applying \eqref{thm_probbound} $n_0$-times, we end up with
  \begin{align*}
    &\mathbb{Q}^{X,K}v( C_{\zeta,K}\geq r )\\
    &\quad\leq C^{n_0}\mathbb{Q}^{X,K}\bigg( N_{\frac{\alpha}{2}(\frac{1}{2}-\alpha)}
    \geq \frac{1}{\zeta R^{n_0}}\log_2\bigg(\frac{r-6}{K+1}\bigg) \bigg)
    + \sum\limits_{i=0}^{n_0}C^i K e^{-c_22^{R^{-i-1}\frac{\delta}{\zeta}\log_2\big(\frac{r-6}{K+1}\big)}}\\
    &\quad\leq C^{n_0}n_0\bigg(\mathbb{Q}^{X,K}\bigg( N_{\frac{\alpha}{2}(\frac{1}{2}-\alpha)}\geq \frac{1}{\tilde{R}}\log_2\bigg(\frac{r-6}{K+1}\bigg) \bigg) + K e^{-c_2\big(\frac{r-6}{K+1}\big)^{\frac{\delta}{\zeta R^{n_0}}}}\bigg)\\
    &\quad=: C_1\bigg(\mathbb{Q}^{X,K}\bigg( N_{\frac{\alpha}{2}(\frac{1}{2}-\alpha)}\geq \frac{1}{\tilde{R}}\log_2\bigg(\frac{r-6}{K+1}\bigg) \bigg) + K e^{-c_2\big(\frac{r-6}{K+1}\big)^{\tilde{\delta}}}\bigg),
  \end{align*}
  where $C_1,\tilde{\delta},\tilde{R}>0$ depend on $\zeta$ but not on $K$.
\end{proof}

We will handle the event on the right-hand side of \eqref{cor_prob_bound} under the measure~$\P$ again.

\begin{proposition}\label{prop:helpCor}
  In the setup and notation of Corollary~\ref{cor:Cor5}, one has
  \begin{equation*}
    \P\bigg( N_{\frac{\alpha}{2}(\frac{1}{2}-\alpha)}\geq \frac{1}{\tilde{R}}\log_2\bigg(\frac{r-6}{K+1}\bigg) \bigg) \lesssim \bigg(\frac{r-6}{K+1}\bigg)^{-\epsilon},
  \end{equation*}
  for some $\epsilon>0$.
\end{proposition}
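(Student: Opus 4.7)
The plan is to derive the required polynomial tail estimate on $N_{\zeta_0}$ (with $\zeta_0:=\tfrac{\alpha}{2}(\tfrac{1}{2}-\alpha)$) directly from the quantitative moment bound of Proposition~\ref{prop: regularity result}~(ii), applied to $\tilde X:=X^1-X^2$, via the Kolmogorov continuity theorem and a final Markov inequality. The point is that $N_{\zeta_0}$ is, up to a constant, the logarithm of the random Hölder constant of $\tilde X$ on dyadic grids, and Kolmogorov yields such a constant in $L^q$ for arbitrarily large $q$, which translates into polynomial tails.

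To execute this, note that $\tilde X=Z_1-Z_2$ with $Z_i:=X^i-x_0$, so Proposition~\ref{prop: regularity result}~(ii) gives
\[
\mathbb{E}\bigl[|\tilde X(t,x)-\tilde X(t',y)|^q\bigr] \lesssim |t-t'|^{(\frac{1}{2}-\alpha)q}+|x-y|^{(\frac{1}{2}-\alpha)q}
\]
uniformly in $t,t'\in[0,T]$, $|x|,|y|\leq 1$ and any $q\in[2,\infty)$. The two-parameter Kolmogorov continuity theorem therefore yields, for every $\beta<\tfrac{1}{2}-\alpha$ and all sufficiently large $q$, a random variable $C_\beta$ (not depending on $K$) with $\mathbb{E}[C_\beta^q]<\infty$ such that
\[
|\tilde X(t,x)-\tilde X(t',y)|\leq C_\beta\bigl(|t-t'|^\beta+|x-y|^\beta\bigr),\quad(t,x),(t',y)\in[0,T]\times[-1,1].
\]
Now fix $\beta\in\bigl(\tfrac{1}{4}-\tfrac{\alpha}{2},\tfrac{1}{2}-\alpha\bigr)$, which is nonempty precisely because $\alpha<\tfrac{1}{2}$, and which ensures $\alpha\beta-\zeta_0>0$. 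For $(t,x)\in Z_{K,N,\zeta_0}$ and $(t',y)$ satisfying $|t-t'|\leq 2^{-N}$, $|y-x|\leq 2^{-N\alpha}$, the Hölder estimate yields $|\tilde X(t,x)-\tilde X(t',y)|\leq 2C_\beta\,2^{-N\alpha\beta}$, which is bounded by $2^{-N\zeta_0}$ as soon as $2C_\beta\leq 2^{N(\alpha\beta-\zeta_0)}$. This gives the pointwise bound $N_{\zeta_0}(\omega)\leq\lceil(\log_2(2C_\beta))/(\alpha\beta-\zeta_0)\rceil$, whence Markov's inequality of order $q$ yields
\[
\P\bigl(N_{\zeta_0}\geq M\bigr)\leq\P\bigl(C_\beta\geq\tfrac{1}{2}\,2^{M(\alpha\beta-\zeta_0)}\bigr)\lesssim 2^{-Mq(\alpha\beta-\zeta_0)}\mathbb{E}[C_\beta^q].
\]
Substituting $M=\tfrac{1}{\tilde R}\log_2\bigl(\tfrac{r-6}{K+1}\bigr)$ produces the claimed bound with $\epsilon=q(\alpha\beta-\zeta_0)/\tilde R$, arbitrarily large as $q\to\infty$.

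The main (rather mild) technical point to verify is that the Hölder constant $C_\beta$ admits moments of arbitrarily large order with constants independent of $K$; this follows from the standard dyadic-chaining (or Garsia--Rodemich--Rumsey) implementation of Kolmogorov's theorem applied to the moment bound above, since the increments of $\tilde X$ are controlled globally on $[0,T]\times[-1,1]$ and not only up to the stopping time $T_K$. The $K$-independence of $C_\beta$ is precisely what allows the final bound to exhibit the desired $\bigl(\tfrac{r-6}{K+1}\bigr)^{-\epsilon}$ scaling.
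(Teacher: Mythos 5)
Your argument is correct and reaches the desired polynomial tail through essentially the same route as the paper: both start from the moment bound of Proposition~\ref{prop: regularity result}~(ii) and turn it into a Kolmogorov-type H\"older estimate for $\tilde X$ whose failure probability at scale $N$ decays like $2^{-N\epsilon}$. The paper carries out the dyadic chaining and Borel--Cantelli summation by hand, whereas you package it as a quantitative Kolmogorov / Garsia--Rodemich--Rumsey statement producing a $K$-independent H\"older constant $C_\beta$ with moments of all orders and then apply Markov's inequality once; these are two presentations of the same estimate.
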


\begin{proof}
  We show that, for every $M\in\R_+$,
  \begin{equation*}
    \mathbb{P}\big( N_{\frac{\alpha}{2}(\frac{1}{2}-\alpha)}\geq M \big)
    \lesssim 2^{-M\epsilon}
  \end{equation*}
  for some $\epsilon>0$, which then yields the statement. 
  
  Indeed, from Proposition~\ref{prop: regularity result}~(ii), we have that
  \begin{equation*}
    \mathbb{E}[ |\tilde{X}(t,x)-\tilde{X}(t',x')|^p]\lesssim  |t-t'|^{(\frac{1}{2}-\alpha)p}+|x-x'|^{(\frac{1}{2}-\alpha)p},
  \end{equation*}
  for all $p\geq 2$, $t,t'\in [0,T]$ and $|x|,|x'|\leq 1$. By choosing $(t,x)\in Z_{K,N,\zeta}$, $(t',x')$ from the definition of $Z_{K,N,\zeta}$ and $p>2$ such that $\alpha p(\frac{1}{2}-\alpha)= 1+\beta$ for some $\beta>0$, it holds that
  \begin{equation*}
    \mathbb{E} [|\tilde{X}(t,x)-\tilde{X}(t',x')|^p ]
    \lesssim 2^{-N(1+\beta)} + 2^{-N(1+\beta)}
    \lesssim 2^{-N(1+\beta)}.
  \end{equation*}
  We discretize $[0,T]\times [-1,1]$ on the dyadic rational numbers. For simplicity, we assume $T=1$. First, for some $n\in \N$, we keep some space variable $x\in \lbrace k2^{-n},k\in -2^n,\dots,0,1,\dots,2^n \rbrace$ fixed and apply Markov's inequality to get
  \begin{equation*}
    \mathbb{P}\Big( |\tilde{X}(k2^{-n},x)-\tilde{X}((k-1)2^{-n},x)|\geq 2^{-\zeta n} \Big)
    \lesssim 2^{\zeta n p}2^{-n(1+\beta)}= 2^{-n(1+\beta-\zeta p)}
  \end{equation*}
  for any $k\in 1,\dots,2^n$. Next, we define the following events:
  \begin{align*}
    &A_n=A_n(\zeta):=\bigg\lbrace \max\limits_{k\in\lbrace -2^n+1,\dots,2^n \rbrace} |\tilde{X}(k2^{-n},x)-\tilde{X}((k-1)2^{-n},x)| \geq 2^{-\zeta n-1}\bigg\rbrace,\\
    &B_n:=\bigcup\limits_{m=n}^\infty A_m,\quad N:=\limsup\limits_{n\to\infty}A_n=\bigcap\limits_{n=1}^\infty B_n.
  \end{align*}
  Then, for every $n\in\mathbb{N}$,
  \begin{align}\label{lab}
    \mathbb{P}(A_n)
    &\leq \sum\limits_{k=-2^n+1}^{2^n}\mathbb{P}\Big( |\tilde{X}(k2^{-n},x)-\tilde{X}((k-1)2^{-n},x)|\geq 2^{-\zeta n-1} \Big)\notag\\
    &\lesssim 2^{n+2}2^{-n(1+\beta-\zeta p)+p}=2^{2+p}2^{-n(\beta-\zeta p)}.
  \end{align}
  We choose, for $\zeta=\frac{\alpha}{2}(\frac{1}{2}-\alpha)$,
  \begin{equation*}
    p>\max \bigg\lbrace \frac{1+\beta}{\alpha(\frac{1}{2}-\alpha)},\frac{1}{\frac{\alpha}{2}-\zeta-\alpha^2}  \bigg\rbrace.
  \end{equation*}
  Note that $\frac{\alpha}{2}-\zeta-\alpha^2=\frac{\alpha}{2}-\frac{\alpha}{2}(\frac{1}{2}-\alpha)-\alpha^2=\frac{\alpha}{4}-\frac{\alpha^2}{2}>0$ as $\alpha<\frac{1}{2}$. Then, we have that
  \begin{equation}\label{lab2}
    0< p \bigg(\frac{\alpha}{2}-\zeta-\alpha^2 \bigg)-1=\alpha p \bigg(\frac{1}{2}-\alpha \bigg)-1-\zeta p=\beta -\zeta p
  \end{equation}
  and from \eqref{lab} it follows by the geometric series that
  \begin{equation*}
    \mathbb{P}(B_n)
    \leq \sum\limits_{m=n}^\infty \mathbb{P}(A_m)\lesssim 2^{2+p}\frac{2^{-n(\beta-\zeta p)}}{1-2^{\zeta p-\beta}}\to 0 \quad \text{as } n\to\infty,
  \end{equation*}
  where $2^{\zeta p-\beta}<1$ because of \eqref{lab2}.

  Analogously, we fix some time variable $t$ and get an analogue version of inequality \eqref{lab}. Now, we fix an event $\omega\in\Omega$ and some
  \begin{equation*}
    N\geq N_{\frac{\alpha}{2}(\frac{1}{2}-\alpha)}(\omega),
  \end{equation*}
  where $N_{\frac{\alpha}{2}(\frac{1}{2}-\alpha)}(\omega)$ is such that 
  \begin{equation*}
    \omega \notin \bigcup\limits_{n=N_{\frac{\alpha}{2}(\frac{1}{2}-\alpha)}}^\infty A_n,
  \end{equation*}
  and this should also hold for the union of the analogue sets for fixed $t$, denote those by $A_n^{(2)}$.

  Let $t,t',x,x'\in D_N$ with $|t-t'|\leq 2^{-N}$ and $|x-x'|\leq 2^{-\alpha N}$. Then, we have
  \begin{align*}
    |\tilde{X}(t,x,\omega)-\tilde{X}(t',x',\omega)|
    &\leq |\tilde{X}(t,x,\omega)-\tilde{X}(t',x,\omega)| + |\tilde{X}(t',x,\omega)-\tilde{X}(t',x',\omega)|\\
    &\leq 2\cdot 2^{-\zeta N-1}=2^{-\zeta N}.
  \end{align*}
  Then, we get from \eqref{lab} that 
  \begin{align*}
    \mathbb{P}(N_\zeta\geq M)
    \leq \sum\limits_{m=M}^\infty \mathbb{P}(A_m) + \sum\limits_{m=M}^\infty \mathbb{P}(A_m^{(2)})
    \lesssim \sum\limits_{m=M}^\infty 2^{-m(\beta-\zeta p)}
    =\frac{2^{-M(\beta-\zeta p)}}{1-2^{\zeta p-\beta}}
    \lesssim 2^{-M\epsilon}
  \end{align*}
  with $\epsilon:=\beta-\zeta p$, by the geometric series with $\beta-\zeta p>0$.

  By the density of the dyadic rational numbers in the reals and the continuity of $\tilde{X}$, the regularity extends to the whole $[0,T]\times [-1,1]$ and, thus, the statement holds.
\end{proof}

We want to fix $\zeta\in (0,1)$, that fulfills the requirements of the previous corollary.

\begin{lemma}\label{lem:Lem14}
  With fixed $\alpha\in (0,\frac{1}{2})$ and $\xi\in (\frac{1}{2},1)$ satisfying
  \begin{equation*}
    1>\xi>\frac{1}{2(1-\alpha)}>\frac{1}{2},
  \end{equation*}
  we can choose $\zeta\in (0,1)$ such that
  \begin{equation}\label{xiabschatzung}
    \frac{\alpha}{2\xi-1}<\zeta <\bigg( \frac{\frac{1}{2}-\alpha}{1-\xi}\wedge 1 \bigg).
  \end{equation}
  Especially, we get 
  \begin{equation*}
    \eta:= \frac{\zeta}{\alpha}>\frac{1}{2\xi -1}.
  \end{equation*}
\end{lemma}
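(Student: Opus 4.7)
The plan is to reduce the claim to checking that the open interval $\bigl(\tfrac{\alpha}{2\xi-1},\, \tfrac{1/2-\alpha}{1-\xi}\wedge 1\bigr)$ is non-empty; any $\zeta$ drawn from it then satisfies~\eqref{xiabschatzung} and automatically lies in $(0,1)$, since the lower endpoint is strictly positive under $\alpha>0$ and $\xi>\tfrac{1}{2}$. Because all quantities appearing in the numerators and denominators are strictly positive under the hypotheses, cross-multiplication is admissible throughout and the verification becomes purely algebraic.

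First I would verify $\tfrac{\alpha}{2\xi-1}<\tfrac{1/2-\alpha}{1-\xi}$. Clearing denominators gives $\alpha(1-\xi)<(\tfrac12-\alpha)(2\xi-1)$, which after cancellation simplifies to $\xi(1-\alpha)>\tfrac{1}{2}$, i.e. $\xi>\tfrac{1}{2(1-\alpha)}$. This is exactly the standing hypothesis, so this half of the gap condition is nothing more than a reformulation of the H\"older-exponent condition from Assumption~\ref{ass:coefficients}.

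Next I would verify $\tfrac{\alpha}{2\xi-1}<1$, equivalently $\xi>\tfrac{1+\alpha}{2}$. Here I would use the elementary fact that $(1-\alpha)(1+\alpha)=1-\alpha^2<1$ for $\alpha>0$, which is equivalent to $\tfrac{1}{2(1-\alpha)}>\tfrac{1+\alpha}{2}$. Combined with $\xi>\tfrac{1}{2(1-\alpha)}$, this yields the required bound, so the second inequality is in fact a strictly weaker consequence of the standing hypothesis.

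With both inequalities established, the interval is non-empty, and picking any $\zeta$ from it yields~\eqref{xiabschatzung}; dividing the lower bound $\zeta>\tfrac{\alpha}{2\xi-1}$ by $\alpha>0$ immediately gives $\eta=\zeta/\alpha>\tfrac{1}{2\xi-1}$. I do not anticipate any genuine obstacle: the entire lemma is a short algebraic consequence of the sharpness of the H\"older exponent assumption $\xi>\tfrac{1}{2(1-\alpha)}$, and the only reason to state it separately is to isolate the admissible range for $\zeta$ used in the preceding corollaries.
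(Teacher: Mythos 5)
Your proof is correct and follows essentially the same algebraic route as the paper: cross-multiplying to reduce $\frac{\alpha}{2\xi-1}<\frac{1/2-\alpha}{1-\xi}$ to $\xi(1-\alpha)>\frac{1}{2}$ is exactly the paper's calculation. The one genuine (if minor) difference is that the paper splits into the cases $\frac{1/2-\alpha}{1-\xi}<1$ and $\geq 1$, handling the bound $\frac{\alpha}{2\xi-1}<1$ only in the second case via $\alpha\leq\xi-\frac12$, whereas you verify $\frac{\alpha}{2\xi-1}<1$ unconditionally through the chain $\xi>\frac{1}{2(1-\alpha)}>\frac{1+\alpha}{2}$ (using $1-\alpha^2<1$). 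Your version is slightly cleaner as it avoids the case split and makes explicit that the lower endpoint is always strictly below $1$, not only when the other endpoint is $\geq 1$; otherwise the two arguments are interchangeable.
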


\begin{proof}
  First, we consider $\frac{\frac{1}{2}-\alpha}{1-\xi}<1$. In this case, we have that
  \begin{align*}
    \frac{\frac{1}{2}-\alpha}{1-\xi}-\frac{\alpha}{2\xi-1}&=\frac{(\frac{1}{2}-\alpha)(2\xi-1)-\alpha(1-\xi)}{(1-\xi)(2\xi-1)}\\
    &=\frac{\xi-\frac{1}{2}-2\alpha\xi+\alpha-\alpha+\alpha\xi}{(1-\xi)(2\xi-1)}=\frac{\xi(1-\alpha)-\frac{1}{2}}{(1-\xi)(2\xi-1)}>0,
  \end{align*}
  by the assumption on $\xi$.
  
  On the other hand, if $\frac{\frac{1}{2}-\alpha}{1-\xi}\geq 1$, then $\alpha\leq \xi-\frac{1}{2}$, i.e. $\frac{\alpha}{2\xi-1}\leq \frac{1}{2}$, and we can fix $\zeta$ such that~\eqref{xiabschatzung} holds.
\end{proof}

Let us finally introduce the following stopping time, that plays a central role for the following Lemma~\ref{lem:crucial}, and is the reason, why we needed Corollary~\ref{cor:Cor5} and Proposition~\ref{prop:helpCor}:
\begin{align}\label{def:T_xi,K}
  T_{\zeta,K}:=\inf\limits_{t\geq 0} \left\{
   \begin{array}{l}
    t\leq T_K \text{ and there exist }\epsilon\in (0,1],\hat{x},x,y\in \mathbb{R}\text{ with }\\
    |x|\leq \epsilon^\alpha,|\tilde{X}(t,\hat{x})|\leq \epsilon^\zeta,|x-\hat{x}|\leq \epsilon^\alpha,|x-y|\leq \epsilon^\alpha \\
    \text{ such that }|\tilde{X}(t,y)|>c_0(K)\epsilon^\zeta \\
  \end{array}
  \right \}\wedge T_K\wedge T,
\end{align}
where $c_0(K):=r_0(K)\vee K^2>0$ with $r_0(k)$ from Corollary~\ref{cor:Cor5}.

\begin{corollary}\label{Cor:StoppzeitggT}
  The stopping time $T_{\zeta,K}$ fulfills $T_{\zeta,K}\to T$ as $K\to\infty$ a.s.
\end{corollary}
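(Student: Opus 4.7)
The plan is to derive a pathwise statement: for $\P$-a.e.~$\omega$, $T_{\zeta,K}(\omega)=T_K(\omega)\wedge T=T$ for all sufficiently large $K$. Since $T_{\zeta,K}\leq T_K\wedge T$ by definition and $T_K\to\infty$ $\P$-a.s.\ (because $X^1,X^2$ are a.s.\ continuous, see~\eqref{def_stopptimeK}), this will give $T_{\zeta,K}\to T$ a.s.

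First, I would fix $\zeta\in(0,1)$ satisfying Lemma~\ref{lem:Lem14} so that Corollary~\ref{cor:Cor5} is applicable, and establish the inclusion $\{T_{\zeta,K}<T_K\wedge T\}\subseteq\{C_{\zeta,K}>c_0(K)\}$. Indeed, by continuity of $\tilde X$, on $\{T_{\zeta,K}<T_K\wedge T\}$ the infimum in~\eqref{def:T_xi,K} is realized by some $(t,\epsilon,\hat x,x,y)$ fulfilling $t\leq T_K$, $|x|\leq\epsilon^\alpha$, $|\tilde X(t,\hat x)|\leq\epsilon^\zeta$, $|x-\hat x|\vee|x-y|\leq\epsilon^\alpha$, but $|\tilde X(t,y)|>c_0(K)\epsilon^\zeta$. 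Corollary~\ref{cor:Cor5}, applied to this very configuration, gives $|\tilde X(t,y)|\leq C_{\zeta,K}\epsilon^\zeta$, which forces $C_{\zeta,K}>c_0(K)$.

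Next, I would show that, for $\P$-a.e.\ $\omega$, $C_{\zeta,K}(\omega)\leq c_0(K)$ for all sufficiently large $K$. From the proof of Corollary~\ref{cor:Cor5}, $C_{\zeta,K}=(K+1)2^{\zeta N_{\zeta_{n_0}}}+6$, where $N_{\zeta_{n_0}}=N_{\zeta_{n_0}}(K,\omega)$ arises from iterating Theorem~\ref{thm:thm1} $n_0$ times, starting from the base case controlled by Proposition~\ref{prop:helpCor}. The key observation is that on the event $\{T_K\geq T\}$, the set $Z_{K,N,\zeta}$, the discrete maxima $M_{n,N,K}$, the random variables $N_\zeta,N_2,N_3$ appearing in the proof of Theorem~\ref{thm:thm1}, and the multiplicative constant $c_{K,\zeta,\delta,\delta_1,\delta',\delta_2}$ entering the definition of $N_1$ depend only on the restriction of $\tilde X$ to $[0,T]\times[-1/2,1/2]$ and on $\zeta$---not on $K$ itself. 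Consequently, on the full-measure set $\{T_K\to\infty\}$ the random variable $N_{\zeta_{n_0}}(K,\omega)$ stabilizes at some $N^\star(\omega)<\infty$ as $K\to\infty$, giving $C_{\zeta,K}(\omega)=(K+1)2^{\zeta N^\star(\omega)}+6=O(K)$. Since $c_0(K)=r_0(K)\vee K^2\geq K^2$ grows strictly faster, $C_{\zeta,K}(\omega)\leq c_0(K)$ eventually, completing the argument.

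The main obstacle I anticipate is the second step: verifying that no hidden $K$-dependence is introduced as Theorem~\ref{thm:thm1} is iterated inside the proof of Corollary~\ref{cor:Cor5}. In particular, I would need to check that the constants emerging from the Girsanov change of measure and the Dambis--Dubins--Schwarz embedding in Step~(i) of Theorem~\ref{thm:thm1}, together with the number of iterations $n_0$, are uniform in $K$ once $T_K\geq T$. Once this bookkeeping is done, the linear-in-$K$ growth of $C_{\zeta,K}$ is indeed dominated by the quadratic growth of $c_0(K)$, and the pathwise argument closes.
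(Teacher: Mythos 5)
Your overall strategy is genuinely different from the paper's. The paper does \emph{not} prove a pathwise statement: it fixes $\tilde K$, decomposes $\P(T_{\zeta,K}\leq t)\leq \P(\{T_{\zeta,K}\leq t\}\cap\{T_{\tilde K}\geq T\})+\P(T_{\tilde K}<T)$, and controls the first term \emph{in probability} under the measure $\Q^{X,\tilde K}$, using the quantitative tail bound~\eqref{cor_prob_bound} of Corollary~\ref{cor:Cor5} together with Proposition~\ref{prop:helpCor} and the two-sided absolute continuity from Theorem~\ref{thm:thm1}. You aim instead for the stronger assertion that $C_{\zeta,K}(\omega)\leq c_0(K)$ for all large $K$, $\P$-a.e.\ $\omega$. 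That target is reasonable, and your Step~1 inclusion $\{T_{\zeta,K}<T_K\wedge T\}\subseteq\{C_{\zeta,K}>c_0(K)\}$ is sound.

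The gap is in your ``key observation.'' It is simply not true that the constant $c_{K,\zeta,\delta,\delta_1,\delta',\delta_2}$ in the definition~\eqref{N1} of $N_1$ is $K$-free on $\{T_K\geq T\}$. In Step~(ii) of the proof of Theorem~\ref{thm:thm1}, the bound~\eqref{bound_X(s,0)} uses $|\tilde X(s,0)|\leq K(t-s)^{-\zeta}(t-s)^\zeta$ on the branch $s\in[0,t-2^{-N_\zeta}]$, and the resulting $(4+K)^{2\xi}$ prefactor survives into $D_2$, $D_{4,1}$, $D_{4,2}$ (see~\eqref{estimateD2}, \eqref{estimateD41}, \eqref{estimateD42}) and hence into the constant $C$ appearing in the threshold~\eqref{condition}. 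Consequently $c_{K,\dots}\sim\log_2 K$ as $K\to\infty$, so the \emph{constructed} $N_{\zeta_1}=\max\{N_2+m,\,N_\zeta+m,\,c_{K,\dots}N_\zeta+m,\,N_3\}$ does \emph{not} stabilize, and after $n_0$ iterations you cannot conclude $C_{\zeta,K}=(K+1)2^{\zeta N_{\zeta_{n_0}}}+6=O(K)$ pathwise. (The same $K$-dependence enters the Girsanov change to $\Q^{X,K}$: the $\Q^{X,K}$-a.s.\ finiteness of $N_2$ is a $K$-dependent probabilistic statement, not a $K$-uniform pathwise one.) So the ``consequently'' in your second step does not follow from the premise as you have stated it.

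The salvage, close in spirit to what you wrote but logically distinct, is to \emph{freeze} the truncation level rather than let it ride with $K$: a.s.\ there is a $\tilde K=\tilde K(\omega)\in\N$ with $T_{\tilde K}(\omega)\geq T$; apply Corollary~\ref{cor:Cor5} at level $\tilde K$ to get a finite $C_{\zeta,\tilde K}(\omega)$, and observe that, for $K\geq\tilde K(\omega)$, every configuration $(t,\epsilon,\hat x,x,y)$ entering the infimum in~\eqref{def:T_xi,K} satisfies the hypotheses of~\eqref{cor_ineq} at level $\tilde K$, so $|\tilde X(t,y)|\leq C_{\zeta,\tilde K(\omega)}(\omega)\epsilon^\zeta$. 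As soon as $c_0(K)\geq C_{\zeta,\tilde K(\omega)}(\omega)$ the defining set is empty and $T_{\zeta,K}(\omega)=T$. This is the correct version of your pathwise argument; it does not require any claim that Theorem~\ref{thm:thm1}'s constants at level $K$ stabilize, which they do not.
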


\begin{proof}
  We fix arbitrary $K,\tilde{K}>0$ such that $\tilde{K}\leq K$. We can bound for any $t\in[0,T)$, 
  \begin{align}
    \P\big( T_{\zeta,K}\leq t \big) &\leq \P\Big( \lbrace T_{\zeta,K}\leq t\rbrace\cap \lbrace T_{\tilde{K}}\geq T \rbrace \Big) +\P\big( T_{\tilde{K}}<T \big)\notag\\
    &=: P_1^{K,\tilde{K}}+P_2^{\tilde{K}}.\label{eq:P1P2}
  \end{align}
  We show that $\lim_{K\to\infty}P_1^{K,\tilde{K}}=0$. For this purpose, we consider the probability measure~$\mathbb{Q}^{X,\tilde{K}}$ from Corollary~\ref{cor:Cor5}. By the definition of $T_{\zeta,K}$ and Corollary~\ref{cor:Cor5}, we obtain that
  \begin{align}
    &\mathbb{Q}^{X,\tilde{K}}\Big( \lbrace T_{\zeta,K}\leq t\rbrace\cap \lbrace T_{\tilde{K}}\geq T \rbrace \Big)\notag \\
    &\qquad\leq \mathbb{Q}^{X,\tilde{K}}\big(T_K\leq t\big) + \mathbb{Q}^{X,\tilde{K}}\big( C_{\zeta,K}>c_0(K) \big)\notag\\
    &\qquad\leq \mathbb{Q}^{X,\tilde{K}}\big(T_K\leq t\big) + C_1\bigg[ \mathbb{Q}^{X,\tilde{K}}\Big( N_{\frac{\alpha}{2}(\frac{1}{2}-\alpha)}\geq \frac{1}{\tilde{R}}\log_2\Big( \frac{K^2-6}{\tilde{K}+1} \Big) \Big)+\tilde{K}e^{-c_2\big( \frac{K^2-6}{\tilde{K}+1} \big)	\delta} \bigg].\label{ineq:boundQXK}
  \end{align}
  By Proposition~\ref{prop:helpCor} we know that the respective of the second probability on the right-hand side of \eqref{ineq:boundQXK} with $\P$ instead of $\Q^{X,\tilde{K}}$ tends to zero as $K\to\infty$. Since $\mathbb{Q}^{X,\tilde{K}}\ll\P$ holds on $(\Omega,\mathscr{F})$, $\lim_{K\to\infty} \P(A_K)=0$ implies $\lim_{K\to\infty} \Q^{X,\tilde{K}}(A_K)=0$ for any sequence $(A_K)_{K\in\N}$ of events in $\Omega$ (see e.g. \cite[Theorem~6.11]{Rudin1987}) and, since $T_K\to \infty$ as $K\to \infty$ a.s., by the continuity of the solutions $X^1$ and $X^2$, we conclude that the whole right-hand side of \eqref{ineq:boundQXK} tends to zero as $K\to\infty$. Hence, since $\P\ll\mathbb{Q}^{X,\tilde{K}}$ on $(\Omega,\mathscr{F}^{\tilde{K}})$ and the event inside $P_1^{K,\tilde{K}}$ is trivially in $\mathscr{F}^{\tilde{K}}$, this implies also tending to zero for the respective $\P$-probability and we obtain $\lim\limits_{K\to\infty}P_1^{K,\tilde{K}}=0$.

  Therefore, using the continuity of $X^1$ and $X^2$ again, we can for every $\epsilon>0$ find some $\tilde{K}>0$ such that \eqref{eq:P1P2} yields
  \begin{equation*}
    \lim\limits_{K\to\infty}\P\big( T_{\zeta,K}\leq t \big)\leq \P\big( T_{\tilde{K}}<T \big)<\epsilon
  \end{equation*}
  and we obtain $\lim\limits_{K\to\infty}\P\big( T_{\zeta,K}\leq t \big)=0$, which yields the statement.
\end{proof}

Recall that we have a fixed constant $\eta>\frac{1}{2\xi-1}$, determined by Lemma~\ref{lem:Lem14}. We use this to fix the sequence $(m^{(n)})_{n\in\mathbb{N}}$ by defining
\begin{equation*}
  m^{(n)}:=a_{n-1}^{-\frac{1}{\eta}}>1,
\end{equation*}
where $a_n$ is the Yamada--Watanabe sequence, defined in \eqref{def:a_n_YW}. With this, we get the following crucial lemma, that regularizes $\tilde{X}$ based on regularity of the approximation $|\langle \tilde{X},\Phi^{n} \rangle|$.

\begin{lemma}\label{lem:crucial}
  For all $x\in B(0,\frac{1}{m})$ and $s\in [0,T_{\zeta,K}]$, if $|\langle \tilde{X}_s,\Phi_x^{n} \rangle|\leq a_{n-1}$, then
  \begin{equation*}
    \sup\limits_{y\in B(x,\frac{1}{m})} |\tilde{X}(s,y)|\leq \tilde{C}_Ka_{n-1},
  \end{equation*}
  for some $\tilde{C}_K>0$ only dependent on $K$.
\end{lemma}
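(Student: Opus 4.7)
The plan is to chain together Proposition~\ref{prop:my_prop6}(v) (which localizes smallness of the spatial average $\langle \tilde X_s, \Phi_x^n\rangle$ to pointwise smallness of $\tilde X$ at some $\hat x$) with the quantitative Hölder-type propagation provided by Corollary~\ref{cor:Cor5}, whose constant has been absorbed into the very definition of the stopping time $T_{\zeta,K}$ via $c_0(K)$.

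First, since $s \le T_{\zeta,K} \le T_K$, the hypothesis $|\langle \tilde X_s, \Phi_x^n\rangle| \le a_{n-1}$ together with Proposition~\ref{prop:my_prop6}(v) yields some $\hat x \in B(x, 1/m)$ with
\begin{equation*}
  |\tilde X(s,\hat x)| \le C_K a_{n-1},
\end{equation*}
where $m = m^{(n)} = a_{n-1}^{-1/\eta}$ and (without loss of generality) $C_K \ge 1$. Now we calibrate an $\epsilon$ so that $(\epsilon, \hat x, x, y)$ satisfies the structural conditions appearing in the definition of $T_{\zeta,K}$. Setting
\begin{equation*}
  \epsilon := (C_K a_{n-1})^{1/\zeta},
\end{equation*}
we exploit the key identity $\eta = \zeta/\alpha$ from Lemma~\ref{lem:Lem14} to obtain
\begin{equation*}
  1/m = a_{n-1}^{1/\eta} = a_{n-1}^{\alpha/\zeta} \le (C_K a_{n-1})^{\alpha/\zeta} = \epsilon^{\alpha}.
\end{equation*}
Consequently $|x| \le 1/m \le \epsilon^\alpha$, $|x - \hat x| \le 1/m \le \epsilon^\alpha$, and $|\tilde X(s,\hat x)| \le C_K a_{n-1} = \epsilon^\zeta$.

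Second, by the very definition of $T_{\zeta,K}$ in \eqref{def:T_xi,K}, at any time $s \le T_{\zeta,K}$ there cannot exist a configuration $(\epsilon, \hat x, x, y)$ with the above properties for which $|\tilde X(s,y)| > c_0(K)\epsilon^\zeta$. Applying this to every $y \in B(x, 1/m) \subseteq B(x, \epsilon^\alpha)$ gives
\begin{equation*}
  \sup_{y \in B(x, 1/m)} |\tilde X(s, y)| \le c_0(K)\epsilon^\zeta = c_0(K) C_K\, a_{n-1},
\end{equation*}
so the lemma holds with $\tilde C_K := c_0(K) C_K$ (for $n$ large enough that $\epsilon \le 1$; the remaining finite collection of values of $n$, where $a_{n-1}$ is bounded below by a $K$-dependent constant, is absorbed by enlarging $\tilde C_K$ using only the a priori bound $|\tilde X(s,\cdot)| \le 2K$ available on $[-\tfrac12,\tfrac12]$ for $s \le T_K$).

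The main obstacle is purely bookkeeping: getting the exponents in the relation $1/m \le \epsilon^\alpha$ to line up. This is exactly why $m^{(n)}$ was defined as $a_{n-1}^{-1/\eta}$ with $\eta = \zeta/\alpha$ in the first place — so that the scale at which $\Phi_x^n$ localizes the average matches the spatial Hölder scale at which Corollary~\ref{cor:Cor5} propagates the pointwise bound $|\tilde X(s,\hat x)| \lesssim \epsilon^\zeta$. Once this matching is in place, the conclusion is immediate from the stopping-time construction.
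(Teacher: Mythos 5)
Your proof is correct and follows essentially the same route as the paper: apply Proposition~\ref{prop:my_prop6}(v) to produce a point $\hat x$ near $x$ where $\tilde X$ is small, calibrate $\epsilon$ so that $\epsilon^\zeta = C_K a_{n-1}$ and $\epsilon^\alpha \geq 1/m$ (using $\eta = \zeta/\alpha$ and the choice $m^{(n)} = a_{n-1}^{-1/\eta}$), and then invoke the definition of $T_{\zeta,K}$ to propagate the bound to all $y \in B(x,1/m)$, yielding $\tilde C_K = c_0(K)C_K$. The only (useful) addition on your side is the explicit remark that one needs $\epsilon \leq 1$ to apply the stopping-time definition and how to absorb the finitely many small-$n$ cases — a detail the paper's proof leaves implicit.
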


\begin{proof}
  By the assumption $|\langle \tilde{X}_s,\Phi_x^{n} \rangle|\leq a_{n-1}$, we can apply Proposition~\ref{prop:my_prop6}~(v) to get that there exists $\hat{x}\in B(x,\frac{1}{m})$ with $|\tilde{X}(s,\hat{x})|\leq C_Ka_{n-1}$.
  
  For fixed $n\geq 1$, we define $\epsilon_n>0$ such that
  \begin{equation*}
    \epsilon_n^\alpha = \frac{1}{m^{(n)}}C_K^{\frac{1}{\eta}}
  \end{equation*}
  holds and, thus, by the choice $\eta=\frac{\zeta}{\alpha}$,
  \begin{equation*}
    C_Ka_{n-1}
    =C_K \bigg(\frac{1}{m} \bigg)^\eta
    =\bigg(\frac{C_K^{\frac{1}{\eta}}}{m}\bigg)^\eta
    =\epsilon_n^\zeta.
  \end{equation*} 
  We use this and the definition of $T_{\zeta,K}$ in \eqref{def:T_xi,K} to get the desired result with $\tilde{C}_K=C_Kc_0(K)$.
\end{proof}

Finally, we can handle the term $I_4^{m,n}$ from \eqref{I1I2I5}.

\begin{lemma}\label{lem:13}
  With $I_4^{m,n}$ from \eqref{I1I2I5} and $T_{\zeta,K}$ defined in \eqref{def:T_xi,K}, one has
  \begin{equation*}
    \lim\limits_{n\to\infty} \mathbb{E} [| I_4^{m,n}(t\wedge T_{\zeta,K})|]=0.
  \end{equation*}
\end{lemma}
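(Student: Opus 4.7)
The aim is a pointwise bound on the integrand of $I_4^{m,n}$ combining: (i) the Yamada--Watanabe estimate $\psi_n(y)\le 2/(ny)$ in \eqref{prop}; (ii) the Hölder bound $(\sigma(s,X^1(s,0))-\sigma(s,X^2(s,0)))^2\le C_\sigma^2|\tilde X(s,0)|^{2\xi}$; (iii) the pathwise control $|\tilde X(s,0)|\le \tilde C_K a_{n-1}$ provided by Lemma~\ref{lem:crucial}. Concretely, for $s\le T_{\zeta,K}$ and $x$ in the support of $\Phi_x^m(0)$, whenever $\psi_n(|\langle \tilde X_s,\Phi_x^m\rangle|)\ne 0$ we have $|\langle \tilde X_s,\Phi_x^m\rangle|\in(a_n,a_{n-1})$, so $\psi_n\le 2/(na_n)$, and Lemma~\ref{lem:crucial} gives $|\tilde X(s,0)|\le \tilde C_K a_{n-1}$ for $|x|\le 1/m$. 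Combining the three bounds,
\begin{equation*}
\psi_n(|\langle \tilde X_s,\Phi_x^m\rangle|)\,\Phi_x^m(0)^2\,\Psi_s(x)\,(\sigma^1-\sigma^2)^2\;\lesssim_K\;\frac{a_{n-1}^{2\xi}}{n\,a_n}\,\Phi_x^m(0)^2\,\Psi_s(x).
\end{equation*}

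Integrating in $x$ via Proposition~\ref{prop:my_prop6}(ii), i.e.\ $\int_\R\Phi_x^m(0)^2\,dx\lesssim m^{(n)}$, using $\Psi$ bounded and $m^{(n)}=a_{n-1}^{-1/\eta}$, I obtain
\begin{equation*}
\mathbb{E}[|I_4^{m,n}(t\wedge T_{\zeta,K})|]\;\lesssim_K\;\frac{m^{(n)}\,a_{n-1}^{2\xi}}{n\,a_n}\;=\;\frac{a_{n-1}^{2\xi-1/\eta}}{n\,a_n}.
\end{equation*}
To conclude, I would examine the asymptotics of $a_n$. Since $\rho$ must satisfy $\int_0^\epsilon\rho^{-2}=\infty$ and, by \eqref{prop}, $\rho(y)^2\ge y$, it is natural to take $\rho(y)^2\asymp y$ near $0$; then \eqref{def:a_n_YW} yields $\log(a_{n-1}/a_n)\asymp n$ and $\log a_n\sim -c_0 n(n+1)/2$. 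A direct computation gives $\log\bigl(a_{n-1}^{2\xi-1/\eta}/a_n\bigr)\sim \tfrac{c_0}{2}n^2(1-2\xi+1/\eta)$, whose leading coefficient is negative exactly when $\eta>1/(2\xi-1)$, the bound secured by Lemma~\ref{lem:Lem14}. Hence the right-hand side tends to zero (even exponentially in $n^2$).

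The main obstacle is that Lemma~\ref{lem:crucial} is stated only for $x\in B(0,1/m)$, whereas $\Phi_x^m(0)$ is supported on the slightly larger ball $|x|\le 1/m+b_n$. For $x$ in the thin annulus $1/m<|x|\le 1/m+b_n$ one replaces Lemma~\ref{lem:crucial} by the following extension: Proposition~\ref{prop:my_prop6}(v) yields $\hat x\in B(x,1/m)$ with $|\tilde X(s,\hat x)|\le C_K a_{n-1}$, and applying the definition \eqref{def:T_xi,K} of $T_{\zeta,K}$ with $\epsilon^\zeta:=C_K a_{n-1}$, i.e.\ $\epsilon^\alpha=C_K^{1/\eta}/m$, one checks that $|x|,|x-\hat x|\le \epsilon^\alpha$ for $n$ large (using $mb_n\to 0$), so taking $y=0$ forces $|\tilde X(s,0)|\le c_0(K)\epsilon^\zeta=\tilde C_K a_{n-1}$; this places the annulus on the same footing as $B(0,1/m)$ in the estimates above.
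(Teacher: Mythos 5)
Your argument coincides step by step with the paper's: it combines the Yamada--Watanabe bound $\psi_n\le 2/(na_n)$ on its support, the H\"older estimate on $\sigma$, Lemma~\ref{lem:crucial} for the control $|\tilde X(s,0)|\le\tilde C_K a_{n-1}$, Proposition~\ref{prop:my_prop6}~(ii) for $\int_{\R}\Phi_x^n(0)^2\dd x\lesssim m^{(n)}$, and then the asymptotics of $a_n=e^{-n(n+1)/2}$ together with $\eta>1/(2\xi-1)$ (equivalently $\xi>\tfrac12+\tfrac{1}{2\eta}$) from Lemma~\ref{lem:Lem14}. The annulus $1/m<|x|\le 1/m+b_n$ that you flag is indeed glossed over by the paper's direct appeal to Lemma~\ref{lem:crucial} (which is stated only for $|x|\le 1/m$), and your fix via Proposition~\ref{prop:my_prop6}~(v) and the definition of $T_{\zeta,K}$ with $\epsilon^\alpha=C_K^{1/\eta}/m^{(n)}$---valid for $n$ large because $m^{(n)}b_n\sim a_n\to 0$---is precisely the mechanism already used inside the proof of Lemma~\ref{lem:crucial}, so the lemma's statement could equally have been given for the larger ball from the start.
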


\begin{proof}
  We use the H{\"o}lder continuity of $\sigma$ as well as the bounded support of $\psi_n$, the inequality $\psi_n(x)\leq\frac{2}{nx}\mathbbm{1}_{\lbrace a_n\leq x\leq a_{n-1} \rbrace}$, the boundedness of $\Psi$, Lemma~\ref{lem:crucial} and Proposition~\ref{prop:my_prop6}~(ii) to get
  \begin{align}\label{tendsto0}
    |I_4^{m,n}(t\wedge T_{\zeta,K})|
    &\lesssim \bigg| \int_0^{t\wedge T_{\zeta,K}} \int_{\mathbb{R}} \psi_n(|\langle \tilde{X}_s,\Phi_{x}^{n} \rangle|)\Phi_{x}^{n}(0)^2 \Psi_s(x) \dd x |\tilde{X}(s,0)|^{2\xi} \dd s \bigg|\notag\\
    &\lesssim \int_0^{t\wedge T_{\zeta,K}} \int_{\mathbb{R}}\mathbbm{1}_{\lbrace a_n\leq |\langle \tilde{X}_s,\Phi_{x}^{n} \rangle| 
    \leq a_{n-1} \rbrace} \frac{2}{n a_n}\Phi_{x}^{n}(0)^2 \Psi_s(x) \dd x |\tilde{X}(s,0)|^{2\xi} \dd s\notag \\
    &\leq \frac{\|\Psi\|_{\infty}}{n a_n} \int_0^{t\wedge T_{\zeta,K}} \int_{\mathbb{R}} \Phi_{x}^{n}(0)^2 \dd x (\tilde{C}_K a_{n-1})^{2\xi} \dd s \notag\\
    &\lesssim \frac{a_{n-1}^{2\xi}}{n a_n} \int_0^{t\wedge T_{\zeta,K}} \int_{\mathbb{R}} \Phi_{x}^{n}(0)^2 \dd x \dd s \notag\\
    &\lesssim \frac{a_{n-1}^{2\xi}}{n a_n} m^{(n)}\lesssim \frac{a_{n-1}^{2\xi}}{n a_n} a_{n-1}^{-\frac{1}{\eta}}=\frac{1}{n} \frac{a_{n-1}^{2\xi-\frac{1}{\eta}}}{ a_n} .
  \end{align}
  We know that $\frac{a_{n-1}}{a_n}=e^n$, $a_0=1$ and, thus, get inductively that $a_n=e^{-\frac{n(n+1)}{2}}$. Therefore, \eqref{tendsto0} tends to zero as $n\to\infty$ if
  \begin{equation*}
    n(n+1)-(2\xi- \eta^{-1})(n-1)n<0
  \end{equation*}
  for $n$ large, which holds if and only if $1-(2\xi-\eta^{-1})<0$, i.e., $\xi >\frac{1}{2}+\frac{1}{2\eta}$, which holds by Lemma~\ref{lem:Lem14}.
\end{proof}

We summarize the essential findings for the proof of Theorem~\ref{thm:main} in the next proposition.

\begin{proposition}\label{prop:step4}
  With $\Psi$ that fulfills Assumption~\ref{ass:Psi} and $T_{\zeta,K}$ defined in \eqref{def:T_xi,K} for $K>0$, one has, for $t\in[0,T]$, that
  \begin{align}\label{absch_kurzvorende}
    \int_{\mathbb{R}}\mathbb{E}[|\tilde{X}(t\wedge T_{\zeta,K},x)|]\Psi_{t\wedge T_{\zeta,K}}(x)\dd x
    &\lesssim \int_0^{t\wedge T_{\zeta,K}}\int_{\mathbb{R}}\mathbb{E}[|\tilde{X}(s,x)|]| \Delta_\theta \Psi_s(x)+\dot{\Psi}_s(x)|\dd x\dd s \notag\\
    &\quad+ \int_0^{t\wedge T_{\zeta,K}} \Psi_s(0) \mathbb{E}[|\tilde{X}(s,0)|]\dd s.
  \end{align}
\end{proposition}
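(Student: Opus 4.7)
The plan is to evaluate the identity~\eqref{I1I2I5} from Proposition~\ref{prop:Psi_step3} at the stopping time $t\wedge T_{\zeta,K}$, take expectations, and pass to the limit $n\to\infty$ with the coupled choice $m=m^{(n)}=a_{n-1}^{-1/\eta}$. After taking expectations, the stochastic integral term vanishes by Lemma~\ref{lem:i1i2i3i5i}(iii), i.e.\ $\E[I_3^{m,n}(t\wedge T_{\zeta,K})]=0$ for every~$n$. For the left-hand side, Proposition~\ref{prop:my_prop6}(iv) yields the pointwise convergence $\phi_n(\langle \tilde{X}_s,\Phi_x^n\rangle)\to|\tilde{X}(s,x)|$, and the bound $|\phi_n'|\leq 1$ together with Proposition~\ref{prop:my_prop6}(iii), the uniform moment estimate in Proposition~\ref{prop: regularity result}(i), and the compact support of $\Psi$ supplies an integrable majorant; dominated convergence therefore gives
\begin{equation*}
  \lim_{n\to\infty}\E\bigl[\langle \phi_n(\langle \tilde{X}_{t\wedge T_{\zeta,K}},\Phi_\cdot^n\rangle),\Psi_{t\wedge T_{\zeta,K}}\rangle\bigr]=\int_{\R}\E[|\tilde{X}(t\wedge T_{\zeta,K},x)|]\Psi_{t\wedge T_{\zeta,K}}(x)\dd x.
\end{equation*}

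For the right-hand side, Lemma~\ref{lem:i1i2i3i5i}(i) and (iv) identify the limits of $\E[I_1^{m,n}(t\wedge T_{\zeta,K})]$ and $\E[I_5^{m,n}(t\wedge T_{\zeta,K})]$ as the expected integrals of $|\tilde{X}(s,x)|$ against $\Delta_\theta \Psi_s(x)$ and $\dot\Psi_s(x)$, respectively; these two contributions can be combined via the elementary inequality $a+b\leq|a+b|$ to produce the first term on the right-hand side of~\eqref{absch_kurzvorende}. Lemma~\ref{lem:i1i2i3i5i}(ii) produces the boundary contribution $\int_0^{t\wedge T_{\zeta,K}}\Psi_s(0)\E[|\tilde{X}(s,0)|]\dd s$. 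Finally, the quadratic variation term is killed in the limit by Lemma~\ref{lem:13}, which states that $\E[|I_4^{m,n}(t\wedge T_{\zeta,K})|]\to 0$ as $n\to\infty$. Putting these five limits together yields~\eqref{absch_kurzvorende}.

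The main obstacle in this argument is the control of $I_4^{m,n}$, which is precisely where the design of the stopping time $T_{\zeta,K}$ (via Corollary~\ref{cor:Cor5} and Theorem~\ref{thm:thm1}), the construction of the test functions $\Phi_x^n$, and the calibration $m^{(n)}=a_{n-1}^{-1/\eta}$ together with the H\"older condition $\xi>\frac{1}{2(1-\alpha)}$ play their decisive role. Since this vanishing has already been dispatched in Lemma~\ref{lem:13}, the present proposition follows as a straightforward aggregation of the already established limits, with Fubini's theorem exchanging the outer expectation and the $x$-integration in the $\Delta_\theta\Psi$ and $\dot\Psi$ contributions.
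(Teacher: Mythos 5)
Your proposal is correct and follows essentially the same route as the paper: start from the It\^o-type identity of Proposition~\ref{prop:Psi_step3} evaluated at $t\wedge T_{\zeta,K}$, take expectations, pass to the limit $n\to\infty$ using Lemma~\ref{lem:i1i2i3i5i} for $I_1,I_2,I_3,I_5$ and Lemma~\ref{lem:13} for $I_4$, and finish with Fubini. The only cosmetic difference is that the paper handles the left-hand side via pointwise convergence of $\phi_n$ together with Fatou's lemma (hence a $\liminf$), whereas you invoke dominated/uniform-integrability convergence to get the limit directly; both are valid given the moment bound of Proposition~\ref{prop: regularity result}~(i) and the compact support of $\Psi$.
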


\begin{proof}
  By Proposition~\ref{prop:Psi_step3}, Lemma~\ref{lem:i1i2i3i5i}, Lemma~\ref{lem:13} and sending $n\to\infty$ after applying Fatou's lemma to exchange limiting and the integral, we get
  \begin{align}\label{eq:final6}
    &\int_{\mathbb{R}}\mathbb{E}[ |\tilde{X}(t\wedge T_{\zeta,K},x)|]\Psi_{t\wedge T_{K,\zeta}}(x)\dd x\\
    &\quad= \int_{\mathbb{R}}\liminf\limits_{n\to\infty} \mathbb{E}  [\phi_n (\langle \tilde{X}_{t\wedge T_{\zeta,K}},\Phi_x^{n}\rangle )]\Psi_{t\wedge T_{K,\zeta}}(x)\dd x\notag\\
    &\quad\leq \liminf\limits_{n\to\infty} \int_{\mathbb{R}} \mathbb{E}[\phi_n (\langle \tilde{X} _{t\wedge T_{\zeta,K}},\Phi_x^{n}\rangle ) ]\Psi_{t\wedge T_{K,\zeta}}(x)\dd x\notag\\
    &\quad\lesssim \mathbb{E}\bigg[ \int_0^{t\wedge T_{\zeta,K}}\int_{\mathbb{R}}|\tilde{X}(s,x)|\big( \Delta_\theta \Psi_s(x)+\dot{\Psi}_s(x) \big)\dd x \dd s \bigg]\notag\\
    &\quad\quad+\mathbb{E}\bigg[ \int_0^{t\wedge T_{\zeta,K}} \Psi_s(0)|\tilde{X}(s,0)|\dd s \bigg]\notag.
  \end{align}
  Applying Fubini's theorem then yields~\eqref{absch_kurzvorende}.
\end{proof}

\section{Step~5: Removing the auxiliary localizations}\label{sec_step5}

We want to construct appropriate test functions $\Psi\in C_0^{\infty}([0,t],\R)$ for some fixed $t\in[0,T]$. They will be of the form 
\begin{equation}\label{def_psi}
  \Psi_{N,M}(s,x):=(S_{t-s}\phi_M(x))g_N(x)
\end{equation}
for $N,M\in\mathbb{N}$, where $(S_u)_{u\in [0,T]}$ denotes the semigroup generated by $\Delta_\theta$ and we specify the sequences of functions $\phi_M,g_N \in C_0^\infty(\mathbb{R})$ in the following.

With the sequence $(\phi_M)_{M\in\mathbb{N}}$ we want to approximate the Dirac distribution around~$0$. To that end, we define
\begin{equation*}
  \phi_M(x):=Me^{-M^2x^2}\mathbbm{1}_{\lbrace |x|\leq \frac{1}{M}\rbrace}+s_M(x),\quad M\geq 2,
\end{equation*}
where the function $s_M(x)$ extends smoothly to zero outside the ball $B(1,\frac{1}{M-1})$ such that  $\lim_{M\to\infty}\phi_M(x)=\delta_0(x)$ pointwise.

Moreover, let $(g_N)_{N\in\mathbb{N}}$ be a sequence of functions in $C_0^\infty(\mathbb{R})$ such that $g_N\colon \mathbb{R}\to [0,1]$,
\begin{equation*}
  B(0,N)\subset \lbrace x\in\mathbb{R}:\, g_N(x)=1\rbrace,\quad B(0,N+1)^C \subset \lbrace x\in\mathbb{R}:\, g_N(x)=0\rbrace,
\end{equation*}
and
\begin{equation}\label{bounddd}
  \sup\limits_{N\in\mathbb{N}}\Big[ \| |x|^{-\theta}g_N'(x) \|_\infty + \| \Delta_\theta g_N(x) \|_\infty \Big] =:C_g<\infty.
\end{equation}

We simplify the term on the right-hand side of~\eqref{eq:final6} in the next corollary.

\begin{corollary}\label{cor:simplify}
  With $\Psi_{N,M}$ constructed in \eqref{def_psi}, one has that
  \begin{align}\label{eq:cor:simplify}
    & \Delta_\theta \Psi_{N,M}(s,x) + \dot{\Psi}_{N,M}(s,x) \notag\\
    &\quad = 4\alpha^2 |x|^{-\theta}\Big( \frac{\partial}{\partial x}S_{t-s}\phi_M(x) \Big)\Big( \frac{\partial}{\partial x}g_N(x) \Big) + S_{t-s}\phi_M(x)\Delta_\theta g_N(x).
  \end{align}
\end{corollary}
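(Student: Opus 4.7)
The plan is a direct computation combining the Leibniz rule for the divergence-form operator $\Delta_\theta$ with the backward heat equation satisfied by the semigroup. Recall that $\Delta_\theta=2\alpha^2\frac{\partial}{\partial x}|x|^{-\theta}\frac{\partial}{\partial x}$ (since $\alpha=\frac{1}{2+\theta}$, so $\frac{2}{(2+\theta)^2}=2\alpha^2$), and write $u(s,x):=S_{t-s}\phi_M(x)$, $v(x):=g_N(x)$, so that $\Psi_{N,M}(s,x)=u(s,x)v(x)$.

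First I would establish a product rule for $\Delta_\theta$. Applying the definition and the classical Leibniz rule twice,
\begin{align*}
\Delta_\theta(uv)
&=2\alpha^2\frac{\partial}{\partial x}\Bigl[|x|^{-\theta}(u_x v+u v_x)\Bigr]\\
&=2\alpha^2\frac{\partial}{\partial x}\bigl(|x|^{-\theta}u_x\bigr)v
+2\alpha^2|x|^{-\theta}u_x v_x
+2\alpha^2 u\frac{\partial}{\partial x}\bigl(|x|^{-\theta}v_x\bigr)
+2\alpha^2|x|^{-\theta}u_x v_x\\
&=(\Delta_\theta u)v+u(\Delta_\theta v)+4\alpha^2|x|^{-\theta}u_x v_x,
\end{align*}
valid for $x\neq 0$; the smoothness of $\phi_M$ and $g_N$ makes every factor classically differentiable there.

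Next I would compute the time derivative. Since $S_u$ is the semigroup generated by $\Delta_\theta$, the function $u\mapsto S_u\phi_M$ solves $\partial_u(S_u\phi_M)=\Delta_\theta(S_u\phi_M)$; therefore, by the chain rule applied to $s\mapsto S_{t-s}\phi_M$,
\begin{equation*}
\dot\Psi_{N,M}(s,x)=\frac{\partial}{\partial s}\bigl(S_{t-s}\phi_M(x)\bigr)\,g_N(x)
=-(\Delta_\theta S_{t-s}\phi_M)(x)\,g_N(x)
=-(\Delta_\theta u)(s,x)\,v(x).
\end{equation*}

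Adding the two identities, the terms $(\Delta_\theta u)v$ cancel, yielding
\begin{equation*}
\Delta_\theta\Psi_{N,M}(s,x)+\dot\Psi_{N,M}(s,x)
=u(s,x)\Delta_\theta v(x)+4\alpha^2|x|^{-\theta}u_x(s,x)v_x(x),
\end{equation*}
which is exactly \eqref{eq:cor:simplify} after substituting back $u=S_{t-s}\phi_M$ and $v=g_N$. There is no real obstacle beyond bookkeeping: the only mildly delicate point is the $|x|^{-\theta}$ singularity at the origin, but since the identity is purely pointwise for $x\neq 0$ and all derivatives of $\phi_M,g_N$ and of $S_{t-s}\phi_M$ (which inherits smoothness from $\phi_M$ via the explicit kernel $p^\theta$ studied in Section~\ref{sec: transformation to SPDE}) are continuous, no further justification is needed.
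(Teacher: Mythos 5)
Your proof is correct and uses the same two ingredients as the paper: the backward evolution equation $\partial_s S_{t-s}\phi_M = -\Delta_\theta S_{t-s}\phi_M$ and a Leibniz expansion of $\Delta_\theta$ on the product $u\,v$. The paper arrives at the same cancellation by fully expanding $\Delta_\theta\Psi_{N,M}$ and $\dot\Psi_{N,M}$ into first and second derivatives together with $\partial_x|x|^{-\theta}$, whereas you package the bookkeeping into the cleaner identity $\Delta_\theta(uv)=(\Delta_\theta u)v+u(\Delta_\theta v)+4\alpha^2|x|^{-\theta}u_xv_x$, which makes the cancellation of $(\Delta_\theta u)v$ immediate; this is a tidier presentation of essentially the same argument.
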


\begin{proof}
  Recall, that, by the definition of the semigroup $(S_t)_{t\in[0,T]}$ in \eqref{eq: def_semigroup} and using the fundamental solution of \eqref{eq: evol_eq}, we get
  \begin{equation*}
    \Delta_\theta S_t \phi(x)=\frac{\partial}{\partial t}S_t \phi(x),\quad t\in[0,T],
  \end{equation*}
  for all $\phi\in C_0^\infty(\R)$. Therefore, the second term on the left-hand side of \eqref{eq:cor:simplify} equals
  \begin{align}\label{eq:calc1}
    \dot{\Psi}_{N,M}(s,x) &= g_N(x)\frac{\partial}{\partial s}\big(S_{t-s} \phi_M(x)\big)\notag\\
    &= -g_N(x)\Delta_\theta\big( S_{t-s} \phi_M(x)\big)\notag\\
    &= -2\alpha^2 g_N(x)\frac{\partial}{\partial x}\Big( |x|^{-\theta}\frac{\partial}{\partial x}\big( S_{t-s}\phi_M(x) \big) \Big)\notag\\
    &= -2\alpha^2 g_N(x)\Big(\frac{\partial}{\partial x} |x|^{-\theta}\Big)\Big(\frac{\partial}{\partial x} S_{t-s}\phi_M(x)  \Big) -2\alpha^2 g_N(x) |x|^{-\theta}\Big(\frac{\partial^2}{\partial x^2} S_{t-s}\phi_M(x) \Big).
  \end{align}
  For the first term on the left-hand side of \eqref{eq:cor:simplify}, we calculate
  \begin{align}\label{eq:calc2}
    &\Delta_\theta \Psi_{N,M}(s,x)\notag\\
    &\quad= 2\alpha^2 \frac{\partial}{\partial x}\Big( |x|^{-\theta}\frac{\partial}{\partial x}\Psi_{N,M}(s,x) \Big)\notag\\
    &\quad= 2\alpha^2 |x|^{-\theta}\frac{\partial^2}{\partial x^2}\Big( S_{t-s}\phi_M(x)g_N(x) \Big) + 2\alpha^2 \Big(\frac{\partial}{\partial x} |x|^{-\theta}\Big)\Big(\frac{\partial}{\partial x}S_{t-s}\phi_M(x)g_N(x) \Big)\notag\\
    &\quad= 4\alpha^2 |x|^{-\theta}\Big(\frac{\partial}{\partial x} S_{t-s}\phi_M(x)\Big)\Big(\frac{\partial}{\partial x}g_N(x)\Big)  + 2\alpha^2 |x|^{-\theta}g_N(x)\Big(\frac{\partial^2}{\partial x^2}S_{t-s}\phi_M(x) \Big)\notag\\
    &\quad\quad+ 2\alpha^2 |x|^{-\theta}\Big( S_{t-s}\phi_M(x)\Big)\Big(\frac{\partial^2}{\partial x^2}g_N(x) \Big) \notag\\
    &\quad\quad+ 2\alpha^2 \Big(\frac{\partial}{\partial x} |x|^{-\theta}\Big)\Big(\frac{\partial}{\partial x}S_{t-s}\phi_M(x)\Big)g_N(x)\notag\\
    &\quad\quad+ 2\alpha^2 \Big(\frac{\partial}{\partial x} |x|^{-\theta}\Big)\big(S_{t-s}\phi_M(x)\big)\Big(\frac{\partial}{\partial x}g_N(x)\Big).
  \end{align}
  Hence, adding up \eqref{eq:calc1} and \eqref{eq:calc2}, we obtain
  \begin{align*}
    &\Delta_\theta \Psi_{N,M}(s,x) + \dot{\Psi}_{N,M}(s,x)\\
    &\quad= 4\alpha^2 |x|^{-\theta}\Big(\frac{\partial}{\partial x} S_{t-s}\phi_M(x)\Big)\Big(\frac{\partial}{\partial x}g_N(x)\Big) +  2\alpha^2 |x|^{-\theta}\Big( S_{t-s}\phi_M(x)\Big)\Big(\frac{\partial^2}{\partial x^2}g_N(x) \Big)\\
    &\qquad + 2\alpha^2 \Big(\frac{\partial}{\partial x} |x|^{-\theta}\Big)\big(S_{t-s}\phi_M(x)\big)\Big(\frac{\partial}{\partial x}g_N(x)\Big)\\
    &\quad = 4\alpha^2 |x|^{-\theta}\Big(\frac{\partial}{\partial x} S_{t-s}\phi_M(x)\Big)\Big(\frac{\partial}{\partial x}g_N(x)\Big) + S_{t-s}\phi_M(x)\Delta_\theta g_N(x).
  \end{align*}
\end{proof}

With these observations, we want to show that the semigroup $(S_t)_{t\in[0,T]}$ can be exponentially bounded in the following way.

\begin{lemma}\label{lem:Lemma3}
  For any $\phi \in C_0^\infty(\mathbb{R})$, $t\in [0,T]$ and for any $\lambda>0$, there is a constant $C_{\lambda,\phi,t}>0$ such that
  \begin{equation*}
    \bigg| S_t \phi(x) + \frac{\partial}{\partial x}( S_t \phi(x)) \bigg|\mathbbm{1}_{\lbrace N+1>|x|>N\rbrace} \leq C_{\lambda,\phi,t}e^{-\lambda |x|}\mathbbm{1}_{\lbrace N+1> |x|>N\rbrace}
  \end{equation*}
  for any $N\geq 1$ and $x\in\mathbb{R}$.
\end{lemma}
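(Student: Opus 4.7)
The plan is to exploit the explicit Bessel-type formula for $p_t^\theta(x,y)$ from Lemma~\ref{lem:transDens} together with the compact support of $\phi$ to show that $|S_t\phi(x)|$ and $|\partial_x S_t\phi(x)|$ decay at a Gaussian-type rate $e^{-c|x|^{2+\theta}}$ as $|x|\to\infty$, which dominates every $e^{-\lambda|x|}$. Since $\phi\in C_0^\infty(\R)$, fix $R>0$ with $\operatorname{supp}\phi\subseteq B(0,R)$. Then $S_t\phi(x)=\int_{|y|\le R}p_t^\theta(x,y)\phi(y)\dd y$, and the derivative is obtained by differentiating under the integral, which is justified a posteriori by the exponential domination established below.

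The key analytic step combines the bound from Corollary~\ref{cor:bound_ptheta} with the classical asymptotics $I_\nu(z)\lesssim z^{-1/2}e^z$ for $z\ge 1$ and $I_\nu(z)\lesssim z^{\nu}$ for $z\le 1$, via the algebraic identity
\begin{equation*}
|x|^{2+\theta}+|y|^{2+\theta}-2|xy|^{1+\theta/2}=\bigl(|x|^{1+\theta/2}-|y|^{1+\theta/2}\bigr)^2,
\end{equation*}
which absorbs the exponential growth of $I_\nu$ into the Gaussian factor $e^{-(|x|^{2+\theta}+|y|^{2+\theta})/(2t)}$. This yields, for $|x|\ge 2R$ and $|y|\le R$,
\begin{equation*}
p_t^\theta(x,y)\le C_{t,\theta,R}(1+|x|)^{\theta/4}\exp\Bigl(-\tfrac{1}{8t}|x|^{2+\theta}\Bigr),
\end{equation*}
where the last step uses $|y|^{1+\theta/2}\le \tfrac12|x|^{1+\theta/2}$. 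Since $2+\theta>1$, for each $\lambda>0$ there is $C_{\lambda,t}$ with $(1+|x|)^{\theta/4}e^{-|x|^{2+\theta}/(8t)}\le C_{\lambda,t}e^{-\lambda|x|}$ on $[1,\infty)$, so integration against $\phi$ gives $|S_t\phi(x)|\le \tilde C_{\lambda,\phi,t}e^{-\lambda|x|}$ for $|x|\ge 2R$, and continuity of $S_t\phi$ on the compact annulus $1\le|x|\le 2R$ absorbs the remaining range into the constant.

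For $\partial_x S_t\phi(x)$ I differentiate the explicit formula from Lemma~\ref{lem:transDens}: using the Bessel recurrence $I_\nu'(z)=I_{\nu-1}(z)-\nu I_\nu(z)/z$, $\partial_x p_t^\theta(x,y)$ becomes a finite sum of terms of the form (polynomial in $|x|,|y|,1/t$) $\times$ $e^{-(|x|^{2+\theta}+|y|^{2+\theta})/(2t)}$ $\times$ $I_{\nu\pm 1}(|xy|^{1+\theta/2}/t)$. Exactly the same AM-GM argument applies, giving $|\partial_x p_t^\theta(x,y)|\le C'_{t,\theta,R}(1+|x|)^M e^{-|x|^{2+\theta}/(8t)}$ for some $M=M(\theta)$, which is again bounded by $C''_{\lambda,t}e^{-\lambda|x|}$. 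Summing the two bounds yields the claim on $\{N<|x|<N+1\}$ for every $N\ge 1$ with a single constant $C_{\lambda,\phi,t}$.

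The main obstacle is purely bookkeeping: tracking the polynomial prefactors and Bessel indices generated by differentiating the kernel and by switching between the small-$z$ and large-$z$ regimes of $I_\nu$. These prefactors carry various powers of $|x|$, $|y|$ and $1/t$, but since the dominant Gaussian factor $e^{-c|x|^{2+\theta}}$ decays faster than any polynomial times $e^{-\lambda|x|}$, the precise exponents are immaterial and the desired exponential bound follows uniformly in $N\ge 1$.
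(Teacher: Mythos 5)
Your proposal is correct and follows a genuinely different path from the paper's proof, so it is worth comparing. The paper's argument rests on the Ifantis--Siafarikas comparison inequality
$I_\nu(b) < (b/a)^\nu e^{b-a}\big(\tfrac{a+\nu+1/2}{b+\nu+1/2}\big)^{\nu+1/2} I_\nu(a)$ applied with $a=|y|^{1+\theta/2}/t$ and $b=|xy|^{1+\theta/2}/t$, which pulls all $x$-dependence out of the Bessel factor and reduces the integral to $\int p_t^\theta(1,y)\dd y$, a quantity controlled because $p_t^\theta$ is (essentially) a probability density; the derivative is then handled with the recurrence $I_\nu'(z)=\tfrac{\nu}{z}I_\nu(z)+I_{\nu+1}(z)$, whose virtue is that $I_\nu$ and $I_{\nu+1}$ both stay finite as $z\to 0$ (recall $\nu=\tfrac{1}{2+\theta}-1\in(-1,0)$). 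Your route avoids that specialized reference entirely: the identity $|x|^{2+\theta}+|y|^{2+\theta}-2|xy|^{1+\theta/2}=\big(|x|^{1+\theta/2}-|y|^{1+\theta/2}\big)^2$ together with the standard asymptotics $I_\nu(z)\lesssim z^{-1/2}e^z$ (large $z$) and $I_\nu(z)\lesssim z^\nu$ (small $z$) absorbs the Bessel exponential into the Gaussian, and the compact support of $\phi$ plus $|x|\ge 2R$ forces $|y|^{1+\theta/2}\le\tfrac12|x|^{1+\theta/2}$, leaving the genuine Gaussian decay $e^{-|x|^{2+\theta}/(8t)}$, which dominates any $e^{-\lambda|x|}$. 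This is more elementary and self-contained, and I verified that the polynomial prefactor $(1+|x|)^{\theta/4}$ and the small-$z$ branch (where the $|x|$-power works out to $0$) both close correctly. Two minor points you gloss over: (a) the case $t=0$, where the kernel formula degenerates and your $1/t$ prefactor blows up—the paper deals with it in one line since $S_0\phi+\partial_x S_0\phi=\phi+\phi'$ is compactly supported, and you should say the same; (b) your derivative step uses the recurrence $I_\nu'(z)=I_{\nu-1}(z)-\tfrac{\nu}{z}I_\nu(z)$, which brings in $I_{\nu-1}$ with $\nu-1\in(-2,-1)$, so $I_{\nu-1}(z)\sim z^{\nu-1}$ is singular at $z=0$; that singularity does cancel in the bookkeeping (the net power of $|y|$ comes out non-negative, as in the paper's argument with $I_{\nu+1}$), but it would be cleaner to switch to the paper's recurrence or to explicitly check the cancellation rather than invoke "purely bookkeeping." Neither point is a gap in the idea, but both deserve a sentence.
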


\begin{proof}
  For $t=0$, the statement is trivial due to $S_0\phi(x)+\frac{\partial}{\partial x}(S_0\phi(x))=\phi(x)+\phi'(x)$, which is bounded with compact support. Thus, we fix $t>0$ and consider the first summand without the derivative. We use the inequality
  \begin{equation}\label{ineq:besselfunction}
    I_\nu(b)< \bigg(\frac{b}{a}\bigg)^\nu e^{b-a}\bigg( \frac{a+\nu+\frac{1}{2}}{b+\nu+\frac{1}{2}} \bigg)^{\nu+\frac{1}{2}} I_\nu(a),\quad 0<a<b,\nu>-1,
  \end{equation}
  from \cite[Theorem~2.1~(ii)]{Ifantis1991}, with $a=\frac{|y|^{1+\frac{\theta}{2}}}{t}$ and $b=\frac{|xy|^{1+\frac{\theta}{2}}}{t}$ such that $b>a$ due to $|x|>N\geq 1$. By the bound on $p_t^\theta(x,y)$ from Corollary~\ref{cor:bound_ptheta}, due to the compact support of $\phi$, which we denote by $S_\phi$, and using \eqref{ineq:besselfunction}, we get
  \begin{align}\label{exponentterm}
    S_t\phi(x)
    &\leq\int_{\mathbb{R}}\frac{(2+\theta)}{2t}|xy|^{\frac{(1+\theta)}{2}}e^{-\frac{|x|^{2+\theta}+|y|^{2+\theta}}{2t}}I_\nu\bigg(\frac{|xy|^{1+\frac{\theta}{2}}}{t}\bigg)\phi(y)\dd y\notag\\
    &\leq C_\phi \int_{S_\phi}\frac{(2+\theta)}{2t}|xy|^{\frac{(1+\theta)}{2}}e^{-\frac{|x|^{2+\theta}+|y|^{2+\theta}}{2t}}|x|^{\nu(1+\frac{\theta}{2})}e^{\frac{|xy|^{1+\frac{\theta}{2}}}{t}-\frac{|y|^{1+\frac{\theta}{2}}}{t}} I_\nu\bigg(\frac{|y|^{1+\frac{\theta}{2}}}{t}\bigg) \dd y\notag\\
    &\leq C_\phi \bigg(\int_{\mathbb{R}}\frac{(2+\theta)}{2t}|y|^{\frac{(1+\theta)}{2}}e^{-\frac{1^{2+\theta}+|y|^{2+\theta}}{2t}}I_\nu\bigg(\frac{|y|^{1+\frac{\theta}{2}}}{t}\bigg) \dd y\bigg)
    |x|^{(\nu+1)(1+\frac{\theta}{2})}e^{-\frac{|x-1|^{2+\theta}}{2t}}\notag\\
    &\quad  \times e^{c_\phi(|x|^{1+\frac{\theta}{2}}-1)} \notag\\
    &=C_\phi \bigg(\int_{\mathbb{R}}p_t^\theta(1,y) \dd y\bigg)
    |x|^{(\nu+1)(1+\frac{\theta}{2})}e^{-\frac{|x-1|^{2+\theta}}{2t}+c_\phi(|x|^{1+\frac{\theta}{2}}-1)+\lambda|x|}e^{-\lambda|x|} \notag\\
    &\leq C_{\lambda,\phi,t}e^{-\lambda|x|},
  \end{align}
  since the function $x\mapsto |x|^{(\nu+1)(1+\frac{\theta}{2})}e^{-\frac{|x-1|^{2+\theta}}{2t}+c_\phi(|x|^{1+\frac{\theta}{2}}-1)+\lambda|x|}$ attains a maximum on $\R$ for all $c_\phi>0$.
  
  For the second summand, we substitute $z=\frac{|xy|^{1+\frac{\theta}{2}}}{t}$ such that $\frac{1}{\partial x}=\frac{1+\frac{\theta}{2}}{t}y|xy|^{\frac{\theta}{2}}\frac{1}{\partial z}$, apply the product rule and $\frac{\partial}{\partial z}I_\nu(z)=\frac{\nu}{z}I_\nu(z)+I_{\nu+1}(z)$ (see \cite[page~67]{Magnus1966}) to get, for $|x|>1$,
  \begin{align}\label{resubstitute}
    \frac{\partial}{\partial x}( S_t\phi(x))
    &\leq \frac{\partial}{\partial x}\int_{\mathbb{R}}\frac{(2+\theta)}{2t}|xy|^{\frac{(1+\theta)}{2}}e^{-\frac{|x|^{2+\theta}+|y|^{2+\theta}}{2t}}I_\nu\bigg(\frac{|xy|^{1+\frac{\theta}{2}}}{t}\bigg)\phi(y)\dd y\notag\\
    &= \frac{(2+\theta)}{2t}\int_{\mathbb{R}}\frac{\partial}{\partial z}\bigg(|xy|^{\frac{(1+\theta)}{2}}\frac{1+\frac{\theta}{2}}{t}y|xy|^{\frac{\theta}{2}}e^{-\frac{|x|^{2+\theta}+|y|^{2+\theta}}{2t}}I_\nu (z)\bigg)\phi(y)\dd y\notag\\
    &= \frac{(2+\theta)}{2t}\int_{\mathbb{R}}\bigg(\frac{\partial}{\partial z}\bigg(|xy|^{\frac{(1+\theta)}{2}}\frac{1+\frac{\theta}{2}}{t}y|xy|^{\frac{\theta}{2}}e^{-\frac{|x|^{2+\theta}+|y|^{2+\theta}}{2t}}\bigg) I_\nu(z) \notag\\
    &\quad\quad\quad\quad\quad\quad+ |xy|^{\frac{(1+\theta)}{2}}\frac{1+\frac{\theta}{2}}{t}y|xy|^{\frac{\theta}{2}}e^{-\frac{|x|^{2+\theta}+|y|^{2+\theta}}{2t}}\frac{\partial}{\partial z}(I_\nu(z))\bigg)\phi(y)\dd y\notag\\
    &= \frac{(2+\theta)}{2t}\int_{\mathbb{R}}\bigg(\frac{1+\theta}{2}y|xy|^{\frac{(\theta-1)}{2}}e^{-\frac{|x|^{2+\theta}+|y|^{2+\theta}}{2t}} \notag\\
    &\qquad\qquad\qquad- \frac{2+\theta}{2t}|x|^{1+\theta}|xy|^{\frac{1+\theta}{2}}e^{-\frac{|x|^{2+\theta}+|y|^{2+\theta}}{2t}}\bigg) I_\nu\bigg( \frac{|xy|^{1+\frac{\theta}{2}}}{t}\bigg)\phi(y)\dd y \notag\\
    &\quad+ \frac{(2+\theta)}{2t}\int_{\mathbb{R}}\bigg(|xy|^{\frac{(1+\theta)}{2}}\frac{1+\frac{\theta}{2}}{t}y|xy|^{\frac{\theta}{2}}e^{-\frac{|x|^{2+\theta}+|y|^{2+\theta}}{2t}} \notag\\
    &\qquad\qquad\qquad\bigg( \nu\frac{t}{|xy|^{1+\frac{\theta}{2}}} I_\nu\bigg( \frac{|xy|^{1+\frac{\theta}{2}}}{t}\bigg)
    +I_{\nu+1}\bigg( \frac{|xy|^{1+\frac{\theta}{2}}}{t} \bigg) \bigg)\bigg)\phi(y)\dd y \notag\\
    &\leq C_{t,\phi}\int_{S_\phi}\bigg(|xy|^{\frac{(1+\theta)}{2}}e^{-\frac{|x|^{2+\theta}+|y|^{2+\theta}}{2t}} \notag\\
    &\qquad\qquad\qquad+ |x|^{1+\theta}|xy|^{\frac{(1+\theta)}{2}}e^{-\frac{|x|^{2+\theta}+|y|^{2+\theta}}{2t}}\bigg) I_\nu\bigg( \frac{|xy|^{1+\frac{\theta}{2}}}{t}\bigg)\dd y \notag\\
    &\quad+ \int_{S_\phi}\bigg(|xy|^{\frac{(1+\theta)}{2}}e^{-\frac{|x|^{2+\theta}+|y|^{2+\theta}}{2t}} \bigg( I_\nu\bigg( \frac{|xy|^{1+\frac{\theta}{2}}}{t}\bigg)
    +I_{\nu+1}\bigg( \frac{|xy|^{1+\frac{\theta}{2}}}{t} \bigg) \bigg)\bigg)\dd y \notag\\
    &\leq C_{t,\phi}\int_{S_\phi} |x|^{1+\theta}|xy|^{\frac{(1+\theta)}{2}}e^{-\frac{|x|^{2+\theta}+|y|^{2+\theta}}{2t}}\bigg( I_\nu\bigg( \frac{|xy|^{1+\frac{\theta}{2}}}{t}\bigg)
    +I_{\nu+1}\bigg( \frac{|xy|^{1+\frac{\theta}{2}}}{t} \bigg) \bigg)\bigg)\dd y,
  \end{align}
  where $S_\phi:=\lbrace y\in\R\colon \phi(y)\neq 0 \rbrace$. The integrands in \eqref{resubstitute} vanish for $y=0$ by the definition of $I_\nu$ in \eqref{def:Besselfunction} with $\nu=\frac{1}{2+\theta}-1<\frac{1+\theta}{2}$. If we thus show that, for any $\nu>-1$, there is a constant $C_\nu>0$ such that
  \begin{equation}\label{bound_bessel}
    I_\nu(z)+I_{\nu+1}(z)\leq C_\nu\big( z^{\nu+1}+z^{\nu+2} \big)e^z
  \end{equation}
  holds for all $z>0$, then the statement will follow, since, similar as in \eqref{exponentterm}, all the $x$-polynomials in \eqref{resubstitute} and the Bessel function terms are dominated by the term $e^{-\frac{|x|^{2+\theta}}{2t}}$ and the $y$ terms can be bounded using the compact support of~$\phi$.

  To get \eqref{bound_bessel}, we use the equality (see \cite[(5.7.9), page~110]{Lebedev1972})
  \begin{equation}\label{Besselbound1}
    I_\nu(z)=2(\nu+1)I_{\nu+1}(z)+I_{\nu+2}(z),
  \end{equation}
  and, since $\nu+1,\nu+2>-\frac{1}{2}$, we can then apply the following inequality from \cite[(6.25), page 63]{Luke1972}, for $x>0$:
  \begin{equation}\label{Besselbound}
    I_\nu(x)<\frac{e^x+e^{-x}}{2\Gamma(\nu+1)}\bigg( \frac{x}{2} \bigg)^\nu<\frac{e^x}{\Gamma(\nu+1)}\bigg( \frac{x}{2} \bigg)^\nu.
  \end{equation}
  \eqref{Besselbound1} and \eqref{Besselbound} yield, as $\Gamma(x)>0$ for $x>0$, that
  \begin{align*}
	I_\nu(z)+I_{\nu+1}(z)
	&=2\bigg(\nu+\frac{3}{2}\bigg)I_{\nu+1}(z)+I_{\nu+2}(z)\\
	&< 2\bigg(\nu+\frac{3}{2}\bigg)\frac{e^z}{\Gamma(\nu+2)}\bigg( \frac{z}{2} \bigg)^{\nu+1} +\frac{e^z}{\Gamma(\nu+3)}\bigg( \frac{z}{2} \bigg)^{\nu+2} \\
	&\leq C_\nu\big( z^{\nu+1}+z^{\nu+2} \big)e^z,
  \end{align*}
  which proves \eqref{bound_bessel}.
\end{proof}

\begin{proposition}\label{prop:step5}
  It holds that
  \begin{equation*}
    \E[|\tilde{X}(t,0)|] \lesssim \int_0^t (t-s)^{-\alpha}\E[|\tilde{X}(s,0)|]\dd s, \qquad t\in[0,T].
  \end{equation*}
\end{proposition}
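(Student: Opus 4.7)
The plan is to substitute the concrete test function $\Psi_{N,M}$ from \eqref{def_psi} into the inequality \eqref{eq:fastfertig} established in Step~5 of the proof of Theorem~\ref{thm:main}, and then to pass to the limit $N\to\infty$ followed by $M\to\infty$. Before doing so I would verify that $\Psi_{N,M}$ satisfies Assumption~\ref{ass:Psi}: positivity at the origin comes from positivity of $p^\theta$, compact support in $x$ is inherited from $g_N$, and the finite-energy requirement~(iii) reduces, via the product rule and \eqref{bounddd}, to integrability of $|x|^{-\theta}(\partial_x S_{t-s}\phi_M)^2$ on $\mathrm{supp}(g_N)$ uniformly in $s\in[0,t]$, which is visible from the explicit formula for $p^\theta$ in Lemma~\ref{lem:transDens}.

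On the left-hand side of \eqref{eq:fastfertig} applied to $\Psi_{N,M}$ one has $\int_\R\E[|\tilde X(t,x)|]\phi_M(x)g_N(x)\dd x$. For $N$ large enough that $\mathrm{supp}(\phi_M)\subset\{g_N\equiv 1\}$ the integrand reduces to $\E[|\tilde X(t,x)|]\phi_M(x)$, and dominated convergence—using $\sup_x\E[|\tilde X(t,x)|]<\infty$ from Proposition~\ref{prop: regularity result}(i) together with continuity of $x\mapsto\E[|\tilde X(t,x)|]$ inherited from the Hölder bound in Proposition~\ref{prop: regularity result}(ii)—then yields $c\,\E[|\tilde X(t,0)|]$ as $M\to\infty$, where $c>0$ denotes the total mass of the approximating family $\phi_M$.

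For the first term on the right-hand side I would invoke Corollary~\ref{cor:simplify} to decompose $\Delta_\theta\Psi_{N,M}+\dot\Psi_{N,M}$ into a piece supported on the shell $\{N\leq|x|\leq N+1\}$ (where $\partial_x g_N$ and $\Delta_\theta g_N$ live) and zero elsewhere. On this shell, Lemma~\ref{lem:Lemma3} applied with any fixed $\lambda>0$, together with \eqref{bounddd}, gives
\begin{equation*}
|\Delta_\theta\Psi_{N,M}(s,x)+\dot\Psi_{N,M}(s,x)|\lesssim C_g\,C_{\lambda,\phi_M,t-s}\,e^{-\lambda N}\,\mathbf{1}_{\{N\leq|x|\leq N+1\}},
\end{equation*}
where a look at the proof of Lemma~\ref{lem:Lemma3} reveals that $C_{\lambda,\phi_M,t-s}$ remains bounded uniformly for $s\in[0,t]$. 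Integrating against the uniformly bounded $\E[|\tilde X(s,x)|]$ and over the shell of unit width, this contribution decays like $e^{-\lambda N}$ and therefore vanishes as $N\to\infty$ for each fixed $M$.

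The remaining boundary term is $\int_0^t S_{t-s}\phi_M(0)g_N(0)\E[|\tilde X(s,0)|]\dd s$, which for $N\geq 1$ equals $\int_0^t S_{t-s}\phi_M(0)\E[|\tilde X(s,0)|]\dd s$ and is therefore $N$-independent. By \eqref{def_pt} one has $p_{t-s}^\theta(0,y)=c_\theta(t-s)^{-\alpha}e^{-|y|^{2+\theta}/(2(t-s))}$, so $S_{t-s}\phi_M(0)$ is dominated by $c_\theta(t-s)^{-\alpha}\|\phi_M\|_{L^1}$ and converges pointwise to $c\,c_\theta(t-s)^{-\alpha}$ as $M\to\infty$. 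Since $(t-s)^{-\alpha}$ is integrable on $[0,t]$ and $\sup_s\E[|\tilde X(s,0)|]<\infty$, dominated convergence produces $c\,c_\theta\int_0^t(t-s)^{-\alpha}\E[|\tilde X(s,0)|]\dd s$, and dividing by $c$ gives the claim. The main technical obstacle I anticipate is confirming that $C_{\lambda,\phi_M,t-s}$ stays bounded as $s\uparrow t$—which requires tracking the explicit constants in the proof of Lemma~\ref{lem:Lemma3}—after which the remainder is routine dominated-convergence bookkeeping.
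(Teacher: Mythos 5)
Your proposal follows exactly the route the paper takes: verify $\Psi_{N,M}$ satisfies Assumption~\ref{ass:Psi}, plug it into the Step~4/Step~5 inequality, invoke Corollary~\ref{cor:simplify} to localize $\Delta_\theta\Psi_{N,M}+\dot\Psi_{N,M}$ to the shell $\{N<|x|<N+1\}$, apply Lemma~\ref{lem:Lemma3} to obtain the exponential decay that kills that term as $N\to\infty$, and then let $M\to\infty$ so that $S_{t-s}\phi_M(0)\to c_\theta(t-s)^{-\alpha}$ produces the singular kernel. The only cosmetic difference is that you track the limiting mass constant $c$ of the approximating family and flag the uniformity of $C_{\lambda,\phi_M,t-s}$ in $s$ explicitly, which the paper suppresses but which is indeed unproblematic since the bound in Lemma~\ref{lem:Lemma3} degenerates favorably (decays) as $t-s\downarrow 0$ on the shell $|x|>N\geq 1$.
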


\begin{proof}
  First, to apply Proposition~\ref{prop:step4}, we need to show that $\Psi_{N,M}$ defined in \eqref{def_psi} fulfills Assumption~\ref{ass:Psi}. $\Psi_{N,M}\in C^2([0,T]\times\R)$ and the conditions $\Psi_{N,M}(s,0)>0$ and $\Gamma(t)\in B(0,J(t))$ for some $J(t)>0$ follow by construction. Moreover, Lemma~\ref{lem:Lemma3} directly yields that the last property holds:
  \begin{align*}
    \sup_{s\leq t}\bigg| \int_{\mathbb{R}}|x|^{-\theta}\bigg(\frac{\partial}{\partial x}\Psi_{N,M}(s,x)\bigg)^2\dd x \bigg|
    &\leq C \int_{\mathbb{R}}|x|^{-\theta}e^{-2\lambda |x|}\dd x,
  \end{align*}
  which is clearly finite as $\theta<1$. Hence, Assumption~\ref{ass:Psi} holds.

  Thus, Proposition~\ref{prop:step4} holds and plugging \eqref{def_psi} into \eqref{absch_kurzvorende}, sending $K\to\infty$ such that $T_{\zeta,K}\to T$ by Corollary~\ref{Cor:StoppzeitggT} and using Corollary~\ref{cor:simplify}, \eqref{bounddd} and Lemma~\ref{lem:Lemma3}, we get
  \begin{align}\label{eqfastfertig}
    &\int_{\mathbb{R}}\mathbb{E}[ |\tilde{X}(t,x)|]\phi_M(x)g_N(x)\dd x\notag\\
    &\quad\lesssim \int_0^{t}\int_{\mathbb{R}}\mathbb{E}[|\tilde{X}(s,x)|]
    \bigg| 4\alpha^2 |x|^{-\theta}\Big( \frac{\partial}{\partial x}S_{t-s}\phi_M(x) \Big)\Big( \frac{\partial}{\partial x}g_N(x) \Big) + S_{t-s}\phi_M(x)\Delta_\theta g_N(x)\bigg| \dd x\dd s\notag\\
    &\qquad+ \int_0^t \Psi_{N,M}(s,0) \mathbb{E}[|\tilde{X}(s,0)| ]\dd s\notag\\
    &\quad\lesssim \int_0^{t}\int_{\mathbb{R}}\mathbb{E}[|\tilde{X}(s,x)|] \Big(\frac{\partial}{\partial x} S_{t-s} \phi_M(x) \Big) + S_{t-s} \phi_M(x) |\mathbbm{1}_{\lbrace N+1>|x|>N\rbrace} \dd x\dd s\notag\\
    &\qquad+ \int_0^t \Psi_{N,M}(s,0) \mathbb{E}[|\tilde{X}(s,0)|]\dd s\notag\\
    &\quad\lesssim\int_0^{t}\int_{\mathbb{R}}\mathbb{E}[|\tilde{X}(s,x)|] e^{-\lambda|x|}\mathbbm{1}_{\lbrace N+1>|x|>N \rbrace} \dd x\dd s + \int_0^t \Psi_{N,M}(s,0) \mathbb{E}[|\tilde{X}(s,0)|]\dd s.
  \end{align}
  We want to send $N,M\to\infty$. By Proposition~\ref{prop: regularity result} (i) we get that
  \begin{align*}
    &\int_0^{t}\int_{\mathbb{R}}\mathbb{E}[|\tilde{X}(s,x)|] e^{-\lambda|x|}\mathbbm{1}_{\lbrace N+1>|x|>N \rbrace} \dd x\dd s
    \lesssim t \int_{N}^{N+1}e^{-\lambda x} \dd x\to 0\quad \text{as }N\to \infty.
  \end{align*}
  Moreover, we get
  \begin{align*}
    \int_0^t \Psi_{N,M}(s,0) \mathbb{E}[|\tilde{X}(s,0)| ]\dd s
    &=\int_0^t ( S_{t-s}\phi_M(0))g_N(x) \mathbb{E}[|\tilde{X}(s,0)|]\dd s\\
    &= \int_0^t \bigg( \int_\R p_{t-s}^\theta (y,0)\phi_M(y)\dd y \bigg) \mathbb{E}[|\tilde{X}(s,0)| ]\dd s\\
    &\stackrel{M\to\infty}{\to} \int_0^t p_{t-s}^\theta(0) \mathbb{E}[|\tilde{X}(s,0)| ]\dd s
    \quad \text{as }M\to \infty,
  \end{align*}
  which gives
  \begin{equation*}
    \int_0^t \Psi_{N,M}(s,0) \mathbb{E}[|\tilde{X}(s,0)| ]\dd s
    = c_\theta \int_0^t (t-s)^{-\alpha} \mathbb{E}[|\tilde{X}(s,0)|]\dd s.
  \end{equation*}
  Hence, sending $N,M\to\infty$ in \eqref{eqfastfertig} yields
  \begin{align*}
    \mathbb{E}[ |\tilde{X}(t,0)| \lesssim \int_0^t (t-s)^{-\alpha}\mathbb{E}[|\tilde{X}(s,0)|]\dd s.
  \end{align*}
\end{proof}

\bibliography{literature}{}
\bibliographystyle{amsalpha}

\end{document}